      \theoremstyle{plain}
      \newtheorem{theorem}{Theorem}[section]
      \newtheorem*{theorem*}{Theorem}
      \newtheorem{lemma}[theorem]{Lemma}
      \newtheorem{corollary}[theorem]{Corollary}
      \newtheorem{proposition}[theorem]{Proposition}
      \theoremstyle{definition}
	  \newtheorem{example}[theorem]{Example}
      \newtheorem{definition}[theorem]{Definition}
     \theoremstyle{remark}
      \newtheorem{remark}[theorem]{Remark}
\definecolor{darkcerulean}{rgb}{0.03, 0.27, 0.49}
\newcommand{\dbtilde}[1]{\tilde{\raisebox{0pt}[0.85\height]{$\tilde{#1}$}}}
 \newcommand\RR{{\mathbb{R}}}
 \newcommand\ZZ{{\mathbb{Z}}}
 \newcommand\NN{{\mathbb{N}}}
 \def\A{{\mathcal A}}
 \def\B{{\mathcal B}}
 \def\G{{\mathcal G}}
 \def\H{{\mathcal H}}
 \def\K{{\mathcal K}}
 \def\L{{\mathcal L}}
 \def\M{{\mathcal M}}
 \def\P{{\mathcal P}}
 \def\Q{{\mathcal Q}}
 \def\S{{\mathcal S}}
 \def\X{{\mathcal X}}
 \def\ZZ{{\mathbb Z}}
 \def\Img{{\mathrm {Im}}}
\newcommand\nplusonedef{\hspace{2 pt}\diagup\hspace{-4.8 pt} \searrow\hspace{-9pt}^{^{n+1}} \hspace{5 pt}} 
\newcommand\ndef{\hspace{2 pt}\diagup\hspace{-4.8 pt} \searrow\hspace{-8pt}^n \hspace{5 pt}}
\newcommand\threedef{\hspace{2 pt}\diagup\hspace{-5 pt} \searrow\hspace{-8pt}^3 \hspace{5 pt}}
\newcommand\ce{{\hspace{2.4 pt} \searrow\hspace{-8 pt}^e \hspace{5 pt}}}
\newcommand\ee{{\hspace{3 pt} \nearrow\hspace{-13 pt}^e \hspace{8 pt}}}
\newcommand\se{{\hspace{2 pt}\diagup\hspace{-4.8 pt} \searrow\hspace{5 pt}}}
\newcommand\co{{\hspace{2 pt}\searrow \hspace{3 pt}}}
\newcommand\ex{{\nearrow \hspace{3 pt}}}
      \def\@setcopyright{}
      \def\serieslogo@{}
\begin{document}

\title {Morse theory for group presentations}
 \author{Ximena Fern\'andez}
\address{Department of Mathematics, Swansea University, UK and Departamento de Matem\'atica, FCEN, Universidad de Buenos Aires, Argentina.}
\email{xfernand@dm.uba.ar}

   \begin{abstract}
We introduce a novel combinatorial method to study  $Q^{**}$-transformations of group presentations or, equivalently, 3-deformations
of CW-complexes of dimension 2.
Our procedure is based on a refinement of discrete Morse theory that gives a Whitehead simple homotopy equivalence from a regular CW-complex to the simplified Morse CW-complex, with an explicit description of the attaching maps and bounds on the dimension of the complexes involved in the deformation.
We apply this technique to show that some known potential counterexamples to the Andrews--Curtis conjecture do satisfy the conjecture.
   \end{abstract}

\subjclass[2020]{
57M07, 
57Q10, 
57Q70,  
20F05,  
55-04. 
}

\keywords{discrete Morse theory, 3-deformations, Andrews--Curtis conjecture, Q**-transformations, posets}

\maketitle

Any finite presentation  $\P=\langle x_1,\dots,x_n ~|~  r_1,\dots,r_m\rangle $ of a group $G$ 
can be transformed into any other presentation of the same group by a 
finite sequence of the following operations \cite{MR1547755, MR1812024}:
\begin{enumerate}
 \item replace some relator $r_i$ by $r_i^{-1}$;
 \item replace some relator  $r_i$ by $r_i r_j$ for some $j \neq i$;
 \item replace some relator $r_i$ by a conjugate $wr_i w^{-1}$ for some $w$ in the free group $F(x_1, x_2, \dots, x_n)$;
\item add a  generator $x_{n+1}$ and a relator $r_{m+1}$ that coincides with $x_{n+1}$, or the inverse of this operation;
\item replace each relator $r_i$ by $\phi(r_i)$ where $\phi$ is an automorphism of $F(x_1, x_2,\dots, x_n)$;
\item add a relator $1$, or the inverse of this operation.
\end{enumerate}
For balanced presentations (i.e., $m=n$), J. Andrews and M. Curtis conjectured in 1965 \cite{MR0173241}  that any presentation of the trivial group $\P=\langle x_1,\dots,x_n ~|~  r_1,\dots,r_n\rangle $
can be transformed into the empty presentation $\langle ~|~ \rangle $ by a 
finite sequence of the  operations $(1)$ to $(5)$, called \textit{$Q^{**}$-transformations} \cite{MR564434, MR813099}. This question still remains open and it has become one of the most notorious problems in group theory, as well as in low-dimensional topology due to its topological consequences.
Indeed, although the original conjecture is in the area of combinatorial group theory,  it has an equivalent formulation -- first noticed by the anonymous referee of the foundational article \cite{MR0173241} -- in terms of  Whitehead's simple homotopy theory \cite{MR0005352, MR0035437}. It states that any (finite) contractible 2-dimensional CW-complex $K$ 3-deforms to a
point. That is, $K$ can be transformed into a point by a sequence of elementary collapses and expansions in which the dimension of the complexes involved is not greater than 3 (see \cite{MR0380813} and \cite[Sect. 2.3. Ch. I]{hog1993two} for a detailed proof of this equivalence). This conjecture is closely related to other relevant  problems in algebraic topology,
such as Whitehead asphericity conjecture \cite{MR0004123},  Zeeman conjecture \cite{MR0156351} and the smooth 4-dimensional Poincar\'e conjecture \cite{kirby_problems} (see also \cite{hog1993two}).
The Andrews--Curtis conjecture is known to be true for some classes of complexes
(such as the \textit{standard spines} \cite{MR710105} and the 
\textit{quasi-constructible complexes} \cite{MR3024764}),
but the problem still remains unsolved for general 2-complexes or, equivalently, balanced presentations of the trivial group. Moreover, there is a list of balanced presentations of the trivial group for which no $Q^{**}$-trivialization is known. They serve as \textit{potential counterexamples} to disprove the conjecture (see \cite[Sect. 1.1 Ch. XII]{hog1993two} and \cite{myasnikov2002andrews}).

Computational approaches to this problem have proven to be limited by the exponential complexity of the algorithms \cite{MR2253006,  bridson2015complexity, MR1970867, KS16, MR1727164, MR1829485, MR1921712}. Most of them are based on the exploration and exhibition of possible transformations of type (1)--(3) from a given presentation.
However, in \cite{bridson2015complexity} M. Bridson exhibited examples of rather small balanced presentations of the trivial group for which 
the minimum length of any $Q^{**}$-simplification sequence to $\langle~|~\rangle$ is super-exponential in the total length of the relators (and hence, computationally intractable with standard methods). 
In this article, we present a
method which combines topological and combinatorial tools,  that allows the computational 
exploration of presentations which are $Q^{**}$-equivalent from a given one without the need of exhibiting the actual list of transformations. 
This alternative technique to find $Q^{**}$-transformations, based on a refinement of discrete Morse theory, enables us to show that some well known potential counterexamples to the conjecture can be easily $Q^{**}$-trivialized. Moreover, we also apply this theory to presentations of non-trivial groups, proving that some potential counterexamples to the \textit{generalized Andrews-Curtis conjecture} \cite{Ba18} do satisfy it.

Forman's discrete Morse theory \cite{MR1358614, MR1612391} introduces a combinatorial tool to simplify the cell decomposition of a given (regular) CW-complex up to homotopy equivalence, in terms of the critical cells of discrete Morse functions. Although this theory results in an efficient way to compute the homology of a regular CW-complex, it does not provide sufficient information to recover its homotopy type due to the lack of a combinatorial description of the simplified CW-complex (the \textit{Morse CW-complex}). Moreover, the procedure does not ensure the preservation of the simple homotopy type.
We extend the scope of Forman's theory 
to Whitehead deformations and simple homotopy classes.
Concretely, given a regular $n$-dimensional complex $K$ and a discrete Morse function on it, we construct an explicit and algorithmically computable
 cell decomposition of the Morse CW-complex and we prove that it $(n+1)$-deforms to $K$. 
We deduce a computational method for handling 3-deformations of 2-complexes and thus study  the  Andrews--Curtis conjecture from a new point of view.
We present an algorithm to obtain new presentations $Q^{**}$-equivalent to a given one without requiring to specify the exhaustive list of movements to transform one into the other.
 
Independently, in \cite{MR3320904, ellis2} the authors also used discrete Morse theory to present
an algorithm to describe a presentation of the fundamental group of a regular CW-complex. They applied it in a classification problem of prime knots, and also to compute the fundamental group of point clouds.

The article is organized as follows. In Section  \ref{section morse},
we present the refinement of discrete Morse theory in terms of internal collapses and Whitehead deformations with bounds in the dimension of the complexes involved.
In Section \ref{section Q**-transformations}, we deduce a method that associates to each presentation $\P$ and acyclic matching in a poset obtained from  $\P$, a presentation $\Q$ such that $\P\sim_{Q^{**}}\Q$.
In Section \ref{counterexamples}, we exhibit applications of our method
to investigate potential counterexamples to the Andrews--Curtis conjecture. Appendix \ref{appendix gordon} contains the proof of an intermediate result that is necessary in Section \ref{counterexamples}. 
The implementation in {\fontfamily{lmss}\selectfont SAGE} \cite{sagemath} of the algorithms of Section \ref{section Q**-transformations}, as well as its application to the potential counterexamples at Section \ref{counterexamples}, can be found at \cite{finite-spaces} as part of the package {\fontfamily{lmss}\selectfont Finite Topological Spaces}. An implementation in {\fontfamily{lmss}\selectfont GAP} \cite{GAP4} is also available at the package {\fontfamily{lmss}\selectfont Posets} \cite{GAP-posets}. An outline of the computational procedure is described in Appendix \ref{appendix code sage}.

{\bf Note.} Most of the results of this article appeared originally in the author's PhD Thesis \cite{phdthesis}. 

\section{Discrete Morse theory and Whitehead deformations}\label{section morse}

Discrete Morse theory was introduced by R. Forman  \cite{MR1358614, MR1612391} as a discrete approach to classical Morse theory for smooth manifolds.
It is based on a combinatorial notion of Morse functions for regular CW-complexes.
Critical cells of Morse functions on a CW-complex $K$ are linked to the number of cells in each dimension of a new CW-complex, \textit{the Morse CW-complex}, which is homotopy equivalent to $K$ \cite[Cor. 3.5]{MR1358614}.
Although discrete Morse theory is a relevant tool to obtain information about the homotopy type of a CW-complex, 
it does not give information about its Whitehead \textit{simple} homotopy type.
In this section
we present a refinement of the theory, showing that discrete Morse theory actually provides a method 
to simplify the cell structure of an $n$-dimensional complex through an $(n+1)$-deformation.

We briefly recall here the main concepts in simple homotopy theory and refer the reader to \cite{MR0362320, hog1993two} for 
a more complete exposition. All the CW-complexes in this article will be finite and connected.
Given  a CW-complex $K$  and  a subcomplex $L\leq K$, we say that $K$ 
\textit{elementary collapses} to $L$ (or $L$ \textit{elementary expands} to $K$) and denote it by $K\ce L$ (resp. $L\ee K$) if
$K=L\cup e^{n-1}\cup e^n$ with $e^{n-1}, e^n\notin L$ and 
there exists a map
$\psi:D^n\to K$ such that $\psi$ is the characteristic map of $e^n$,
$\psi|_{\overline{\partial D^n\smallsetminus D^{n-1}}}$ is the characteristic
map of $e^{n-1}$
and $\psi(D^{n-1})\subseteq L^{(n-1)}$, where $L^{(n-1)}$ is the $(n-1)$-skeleton of $L$.
In general, $K$ 
 \textit{collapses} to $L$ (or $L$ \textit{expands} to $K$) if
 there is a finite sequence of elementary collapses from $K$ to $L$. We denote it by
 $K\co L$ (resp. $L\ex K$).
We say that
a CW-complex $K$ \textit{$n$-deforms}
to $L$, and denote it by $K\ndef L$, if there is a 
sequence of CW-complexes $K=K_0,K_1,\dots K_r=L$ such that $K_i\ce K_{i+1}$ or $K_i\ee K_{i+1}$  for each $0\leq i\leq r-1$, and $\dim(K_i)\leq n$ for all $1\leq i\leq r$. For every $0\leq i\leq r-1$, there is a homotopy equivalence $f_i:K_i\to K_{i+1}$ which is an inclusion or a retraction depending on whether $K_i\ee K_{i+1}$ or $K_{i+1}\ce K_{i}$ respectively.
If $K$ $n$-deforms to $L$, then $K$ and $L$ are then related by a \textit{deformation} $f:K \to L$ defined as the composition of the retractions and inclusions as above, i.e, $f=f_{r-1}\dots  f_1 f_0$.
Notice that if $K\ndef L$, then $K$ and $L$ are homotopy equivalent. The converse is not true in general and this obstruction is measured by the Whitehead group \cite{MR0362320}.

We now outline the main definitions and results in discrete Morse theory. We refer the reader to \cite{MR1939695, MR2361455} for more details. A CW-complex  $K$ is said to be \textit{regular} if   for every open cell $e^n$, the characteristic map $D^n\to \overline{e^n}$
is a homeomorphism. A cell $e$ of a regular complex $K$ is a \textit{face} of a cell $e'$ if $e\subseteq \overline{e'}$. We  denote the face relation by $e\leq e'$.
Given a regular CW-complex $K$, a map $f : K \to  \RR$ is a \textit{discrete 
Morse function} if  for every cell $e^n$ in $K$, the number of faces and cofaces 
of $e^n$ for which the value of $f$ does not increase with dimension is at most one.
An $n$-cell $e^n \in  K^{(n)}$
is a \textit{critical cell of index $n$} if the values of $f$ in 
every face and coface of $e^n$ increase with dimension.
A discrete Morse function induces an ordering in the cells, which determines
level subcomplexes of $K$.
For every $c \in \RR$, the \textit{level subcomplex $K(c)$ of $K$} is the subcomplex 
of closed cells $\bar e$ of $K$ such that $f(e)\leq c$ in $\RR$.
Discrete Morse functions serve as a tool to study the homotopy type of $K$.
The following theorem summarizes the main results in discrete Morse theory.

\begin{theorem}\cite{MR1612391, MR1939695} \label{teo morse}
Let $K$ be a regular CW-complex and  let $f:K\to \RR$ be a discrete Morse function. Let $a < b$ be real numbers.
\begin{enumerate}[(a)]
 \item If  every cell $e\in K$ such that  $f (e) \in (a, b]$ is not critical, then 
$K(b)\co K(a)$.
\item If $e^n\in K$ is the only critical cell with
$f (e^n ) \in (a, b]$, then there is a continuous map 
$\varphi : \partial D^n \to K(a)$ such that
$K(b)$ is homotopy equivalent to $K(a) \cup_{\varphi} D^n$.
\item $K$ is homotopy equivalent to a CW-complex with exactly one cell of dimension $k$ for every critical cell of index $k$. 
\end{enumerate}
\end{theorem}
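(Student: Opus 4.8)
The plan is to isolate the combinatorial backbone of $f$ --- the matching it induces on the face poset --- prove (a) from it directly, then bootstrap (b) out of (a) and (c) out of (b). Recall that by the defining condition, each cell $e$ has at most one face and at most one coface along which $f$ does not increase, and (a short lemma shows) not both; so every noncritical cell is \emph{paired} with a unique adjacent cell one dimension higher or lower across which $f$ fails to increase, while the critical cells are exactly the unpaired ones. After perturbing $f$ to be injective on cells, or simply refining the interval $(a,b]$ at each attained value, any interval containing no critical value decomposes into such matched pairs, and it suffices to handle one pair at a time. Thus for (a) I would reduce to the situation $K(b)=K(a)\cup e^{n-1}\cup e^{n}$ with $e^{n-1}<e^{n}$, $f(e^{n-1})\ge f(e^{n})$, and these the only cells with value in $(a,b]$.

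The combinatorial core is then to verify that $e^{n-1}$ is a \emph{free face} of $e^{n}$ in $K(b)$. Any proper coface $\tilde e$ of $e^{n-1}$ contained in $K(a)$ would satisfy $f(\tilde e)\le a<f(e^{n-1})$, producing a second non-increasing coface of $e^{n-1}$ besides $e^{n}$ and contradicting the Morse condition; and no coface of $e^{n-1}$ other than $e^{n}$ lies in $K(b)\smallsetminus K(a)$, since only $e^{n-1}$ and $e^{n}$ have value in $(a,b]$. Hence $e^{n}$ is the unique cell properly containing $e^{n-1}$, the codimension being one by the pairing, so $K(b)\ce K(a)$; chaining the pairs (in an order that keeps each successive collapse legal) gives $K(b)\co K(a)$. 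This free-face bookkeeping, and the ordering that legalizes the collapses, is the step I expect to demand the most care.

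For (b), choose $s<t$ in $(a,b]$ with $e^{n}$ the only cell of value in $(s,t]$. Part (a) applied on $(a,s]$ and on $(t,b]$ yields $K(s)\co K(a)$ and $K(b)\co K(t)$, while regularity of $K$ identifies $K(t)=K(s)\cup_{\psi}D^{n}$, where $\psi:\partial D^{n}\to K(s)$ is the attaching map of $e^{n}$. The collapse $K(s)\co K(a)$ supplies a deformation retraction $r:K(s)\to K(a)$; setting $\varphi:=r\circ\psi$ and invoking the gluing lemma for homotopy equivalences --- legitimate because $\partial D^{n}\hookrightarrow D^{n}$, and therefore $K(s)\hookrightarrow K(s)\cup_{\psi}D^{n}$, is a cofibration --- gives $K(b)\simeq K(t)=K(s)\cup_{\psi}D^{n}\simeq K(a)\cup_{\varphi}D^{n}$. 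The one genuinely non-formal ingredient is this invariance of the adjunction space under replacing the base by a homotopy-equivalent one.

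Finally, (c) follows by induction on the critical values $c_{1}<\dots<c_{k}$. Suppose $M_i$ is a CW-complex with exactly one cell per critical cell of value at most $c_i$, together with a homotopy equivalence $K(c_i)\simeq M_i$. By (b), passing to $c_{i+1}$ attaches to $K(c_i)$ a single cell whose dimension equals the index of the corresponding critical cell; transporting its attaching map along $K(c_i)\simeq M_i$ exactly as in (b) produces $M_{i+1}$ with $K(c_{i+1})\simeq M_{i+1}$. At the top value this yields a CW-complex homotopy equivalent to $K$ with one cell of dimension $k$ for each critical cell of index $k$, as claimed.
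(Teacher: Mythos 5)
Your argument is correct and is essentially the standard proof of Forman's theorem. The paper itself offers no proof of this statement, quoting it from \cite{MR1612391, MR1939695}; your reduction of (a) to matched free pairs collapsed in a legal order, the push-forward of the attaching map along the deformation retraction plus the gluing lemma for (b), and the induction over critical values for (c) reproduce exactly the argument of those references.
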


Given $K$ a regular CW-complex, denote by $\X(K)$ its \textit{face poset}, that is, the poset of cells of $K$ ordered by the face relation $\leq$. Let $\H(\X(K))$ be the \textit{Hasse diagram} of the poset $\X(K)$,  a digraph whose vertices are the cells of $K$ and whose edges are the ordered pairs $(e, e')$ such that $e< e'$ and there exists no $e''\in K$ such that $e< e''< e'$ (in that case, we say that $e'$ covers $e$ and we denote this by $e\prec e')$.

Every discrete Morse function $f:K\to \RR$ has an associated set $M_f$ of pairings of the cells of $K$,  where 
 $(e, e') \in M_f\text{ if and only if }e\prec e'\text{ and }
f (e) \geq f (e').$ Moreover, $M_f$ is an acyclic matching in $\X(K)$.
Recall that a pairing $M$ of cells in $K$  is said to be an \textit{acyclic matching}
if 
each cell of
$K$ is involved in at most one pair of $M$ and the 
directed graph $\H_M(\X(K))$ obtained by reversing  the orientation of the edges of $\H(\X(K))$ associated to 
matched pairs of cells is acyclic. 
In \cite{MR1766262}, M. Chari proved that a subset $C$ of cells of $K$ is the set of critical cells of a discrete Morse function $f$ on $K$ if and only if there is an acyclic matching $M$ in $\X(K)$ such that $C$ is the set of nodes of $\X(K)$ not incident to any edge in $M$.

We next present a series of results in simple homotopy theory as preliminary steps to our refinement of Theorem \ref{teo morse}. Our theory is built over the idea of 
\textit{internal collapses}, a generalization of the standard collapses (see \cite[Ch. 11]{MR2361455} and \cite[Ch. 11]{kozlov2021organized}).
We will show that internal collapses can be thought of as a way of performing $(n+1)$-deformations from a CW-complex of dimension less than or equal to $n$ to another one with fewer number of cells. This general notion  of collapse will be the key to a better understanding of the close connection between discrete Morse theory and Whitehead deformations of bounded dimension.

The cornerstone of the concept of internal collapses is the following fact. If $K\co L$ and we attach a cell $e$ to $K$, then we still have a deformation from $K\cup e$ to $L\cup \tilde e$, where $\tilde e$ is attached with an inherited attaching map (cf. \cite[Prop. 7.1]{MR0362320}).
When needed, we may emphasize the attaching map $\varphi: \partial D^n\to K$ of an $n$-cell by writing $K\cup e^n$ as $K\cup_\varphi D^n$.

\begin{lemma}\label{elementary internal}
 Let $K$ be a CW-complex of dimension less than or equal to $n$. Let $\varphi:\partial D^n\to K$ be the 
 attaching map of an $n$-cell $e^n$.
 If $K\co L$, then $K\cup e^n \nplusonedef L\cup \tilde e^n$, 
 where the attaching map $\tilde \varphi:\partial D^ n\to L$ of $\tilde e^n$ is defined as $\tilde \varphi=r \varphi$ with $r:K\to L$ the canonical
 strong deformation retract induced by the collapse $K\co L$. 
\end{lemma}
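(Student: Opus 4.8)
The plan is to realise the deformation as a single elementary expansion followed by two collapses, using a ``prism'' cell to transport the homotopy from $\varphi$ to $\tilde\varphi=r\varphi$ across the collapse $K\co L$. Write $D\colon K\times I\to K$ for the strong deformation retraction induced by $K\co L$, with $D_0=\mathrm{id}_K$ and $D_1=\iota r$ (where $\iota\colon L\hookrightarrow K$), and set $H=D\circ(\varphi\times\mathrm{id}_I)\colon\partial D^n\times I\to K$, so that $H_0=\varphi$ and $H_1=r\varphi$ takes values in $L$. Denoting by $\Phi\colon D^n\to K\cup e^n$ the characteristic map of $e^n$, I would attach to $K\cup_\varphi D^n$ an $(n+1)$-cell $E^{n+1}$, modelled on $D^n\times I$, by gluing its bottom $D^n\times\{0\}$ to $e^n$ via $\Phi$, its sides $\partial D^n\times I$ to $K$ via $H$, and leaving its top $D^n\times\{1\}$ free. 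The two gluings agree on the overlap $\partial D^n\times\{0\}$ since $H_0=\varphi=\Phi|_{\partial D^n}$, so the free top face is a genuine new $n$-cell, which I call $\tilde e^n$; by construction its attaching map is $H_1=r\varphi$, exactly as required. This yields an elementary expansion $K\cup e^n\ee(K\cup e^n)\cup\tilde e^n\cup E^{n+1}=:Y$.

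Next I would collapse $Y$ back down in two stages. First, $e^n$ is a free face of $E^{n+1}$ (it is a face of no other cell of $Y$, and the characteristic map of $E^{n+1}$ restricts to $\Phi$ on $D^n\times\{0\}$), so the elementary collapse removing the pair $(e^n,E^{n+1})$ gives $Y\ce K\cup_{r\varphi}\tilde e^n$. Second, since $r\varphi$ takes values in $L$, the cell $\tilde e^n$ is attached entirely within $L$ and hence has none of the cells of $K\smallsetminus L$ as a face; consequently every free pair occurring in the collapse $K\co L$ remains a free pair after adjoining $\tilde e^n$, and that collapse extends to $K\cup_{r\varphi}\tilde e^n\co L\cup\tilde e^n$. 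Concatenating the three steps produces
\[
K\cup e^n\;\ee\;Y\;\co\;K\cup_{r\varphi}\tilde e^n\;\co\;L\cup\tilde e^n .
\]

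It remains to check the dimension bound, which is the whole point of the refinement. Since $\dim K\le n$, both $K\cup e^n$ and $K\cup_{r\varphi}\tilde e^n$ have dimension at most $n$, and the only cell of dimension $n+1$ introduced anywhere in the sequence is the single prism $E^{n+1}$, present only in $Y$; every complex above therefore has dimension at most $n+1$, exhibiting the desired $(n+1)$-deformation $K\cup e^n\nplusonedef L\cup\tilde e^n$. I expect the only delicate step to be the verification that the prism attachment is a bona fide elementary expansion: one must confirm that the map defined piecewise by $\Phi$ on the bottom and $H$ on the sides is continuous into the $n$-skeleton of $Y$ (which holds because $\dim(K\cup e^n)\le n$ and the two pieces agree on $\partial D^n\times\{0\}$), and that the top face is genuinely free. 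The underlying fact that a collapse extends over an adjoined cell is the content of the cited \cite[Prop.~7.1]{MR0362320}; the contribution here is to read off the inherited attaching map $r\varphi$ and to keep the added dimension down to a single $(n+1)$-cell.
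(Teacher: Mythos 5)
Your argument is correct and is essentially the paper's own proof: the paper likewise performs a single elementary expansion by the prism $D^n\times I$ glued along the homotopy $H$ between $\varphi$ and $\imath r\varphi$ (your $E^{n+1}$ with free top face $\tilde e^n$), then collapses the pair $(e^n, D^n\times I)$ to reach $K\cup_{\imath r\varphi}D^n$, and finally extends the collapse $K\co L$ over the adjoined cell since $\Img(\imath r\varphi)\subseteq L$. Your write-up just makes the face identifications of the prism and the dimension count more explicit.
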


\begin{proof} Let $\imath:L\to K$ be the inclusion map and let $r:K\to L$ be the strong deformation retract induced by the collapse $K\co L$. There is a homotopy $H:\partial D^n \times I\to K$,
$\imath r\varphi \cong_H \varphi$ that allows to
perform the following  elementary moves
 \[K\cup_{\varphi}D^n \ee  
 \left(K\cup_{\varphi}D^n\right) \cup_{\imath r \varphi}D^n \cup_{H} D^{n}\times I 
 \ce  K\cup_{\imath r \varphi}D^n.\]
 Finally, the collapse $K\co L$ induces a collapse $ K\cup_{\imath r \varphi}D^n\co L\cup _{r \varphi} D^n$, since the image of the attaching map $\imath r \varphi$ is included in $L$.
 Given that the dimension of the complex $ \left(K\cup_{\varphi}D^n\right) \cup_{\imath r \varphi}D^n \cup_{H} D^{n+1}$ is $n+1$,
 we conclude that 
 \[K\cup_{\varphi} D^n \nplusonedef L\cup_{\tilde \varphi} D^n. \qedhere\] 
\end{proof}

The next result is a generalization of the previous statement. It says that  when one attaches a finite sequence of cells to a CW-complex that collapses to a subcomplex, it is possible to recover an iterative procedure for the deformation with bounds in the dimension of the involved complexes.

\begin{proposition}\label{internal collapse}
Let $ \displaystyle K \cup \bigcup_{i=1}^d e_i$ be a CW-complex where
$\dim (K) \leq \dim (e_{i})\leq  \dim (e_{i+1})\leq n$ for all $i=1, 2, \dots, d-1$.
Let $\displaystyle \varphi_j:\partial D_j\to K\cup \bigcup_{i<j} e_i$  be the 
 attaching map of $e_j$. 
 If $K\co L$, then there exist CW-complexes $Z_1\leq Z_2\leq \dots \leq Z_d$ of dimension less than or equal to $n+1$ such that for every $j=1, 2, \dots, d,$
 \[K\cup \bigcup_{i=1}^j e_i \ex Z_j \co L\cup 
 \bigcup_{i=1}^j \tilde e_i\] where the attaching map
$\displaystyle \tilde \varphi_j:\partial D_j\to L\cup \bigcup_{i<j}
\tilde e_i$  of the cell $\tilde e_j$ is defined inductively as follows:
$\displaystyle \tilde \varphi_1=r_0\varphi_1$  with $r_0:K\to L$ the canonical strong deformation retract and for $j>1$, $\tilde \varphi_{j}=\tilde r_{j-1}\imath_{j-1}\varphi_{j}$ where $\displaystyle \tilde r_{j-1}:Z_{j-1}\to L\cup \bigcup_{i<j} \tilde e_i$ is the strong deformation retract and $\imath_{j-1}:\displaystyle K\cup \bigcup_{i<j} e_i \to Z_{j-1}$ is the inclusion.
\end{proposition}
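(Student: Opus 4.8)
The plan is to induct on $j=1,\dots,d$, taking Lemma~\ref{elementary internal} as the base case and replaying its argument at each stage. For $j=1$ the cell $e_1$ is attached to $K$ by $\varphi_1$ and $K\co L$, so Lemma~\ref{elementary internal} produces an intermediate complex $Z_1$ with $K\cup e_1\ex Z_1\co L\cup\tilde e_1$ and $\tilde\varphi_1=r_0\varphi_1$, exactly as required. From the proof of that lemma, $Z_1$ is obtained from $K\cup_{\varphi_1}D_1$ by expanding across a single cylinder $D_1\times I$ and then collapsing; since $\dim e_1\le n$, this gives $\dim Z_1\le n+1$.

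For the inductive step, assume $Z_{j-1}$ has been built, with inclusion $\imath_{j-1}\colon K\cup\bigcup_{i<j}e_i\to Z_{j-1}$ and strong deformation retract $\tilde r_{j-1}\colon Z_{j-1}\to L\cup\bigcup_{i<j}\tilde e_i$. I would attach $e_j$ to $Z_{j-1}$ along $\imath_{j-1}\varphi_j$ and then exploit the two collapses out of $Z_{j-1}$ separately. Because the attaching sphere $\imath_{j-1}\varphi_j(\partial D_j)$ lies in the subcomplex $K\cup\bigcup_{i<j}e_i$, the collapse $Z_{j-1}\co K\cup\bigcup_{i<j}e_i$ extends over the new cell---its free-face pairs never involve $e_j$, whose boundary sits in the target and is therefore untouched---so the \emph{attaching principle} recalled before Lemma~\ref{elementary internal} yields $K\cup\bigcup_{i\le j}e_i\ex Z_{j-1}\cup_{\imath_{j-1}\varphi_j}D_j$. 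For the other collapse $Z_{j-1}\co L\cup\bigcup_{i<j}\tilde e_i$ the attaching sphere need not land in the target, so here I would replay the three elementary moves in the proof of Lemma~\ref{elementary internal}, using the retract $\tilde r_{j-1}$: this expands $Z_{j-1}\cup_{\imath_{j-1}\varphi_j}D_j$ across a single cylinder $D_j\times I$ and then collapses onto $L\cup\bigcup_{i<j}\tilde e_i\cup_{\tilde\varphi_j}D_j=L\cup\bigcup_{i\le j}\tilde e_i$, with $\tilde\varphi_j=\tilde r_{j-1}\imath_{j-1}\varphi_j$ precisely as prescribed. Calling the resulting complex $Z_j$ and composing the two expansions gives $K\cup\bigcup_{i\le j}e_i\ex Z_j\co L\cup\bigcup_{i\le j}\tilde e_i$; the nesting $Z_{j-1}\le Z_j$ is immediate, since $Z_j$ is obtained from $Z_{j-1}$ by attaching cells.

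The one point requiring care---and where the hypotheses $\dim e_i\le\dim e_{i+1}\le n$ enter---is the dimension bound, and I expect this bookkeeping to be the main obstacle; everything else is a mechanical iteration of Lemma~\ref{elementary internal}. Since $Z_{j-1}$ may itself already have dimension $n+1$, the lemma cannot be invoked verbatim with $Z_{j-1}$ in the role of $K$. However, inspecting its proof, the only newly attached cell of dimension exceeding $\dim e_j$ is the cylinder $D_j\times I$, of dimension $\dim e_j+1\le n+1$; every other new cell has dimension $\dim e_j\le n$. Hence $\dim Z_j\le\max(\dim Z_{j-1},\,\dim e_j+1)\le n+1$, and the bound propagates through the induction independently of $\dim Z_{j-1}$. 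Monotonicity of the $\dim e_i$ also guarantees that $K\cup\bigcup_{i<j}e_i$ already contains the skeleton into which $\varphi_j$ maps, so each $K\cup\bigcup_{i\le j}e_i$ is a genuine CW-complex and the whole construction is well defined.
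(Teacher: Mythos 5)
Your proposal is correct and follows essentially the same route as the paper: induction on the number of attached cells, with the inductive step building $Z_j$ from $Z_{j-1}$ by attaching the two copies of $D_j$ (along $\imath_{j-1}\varphi_j$ and $\tilde\imath_{j-1}\tilde\varphi_j$) together with the cylinder over the homotopy coming from the strong deformation retraction, and then using that the two collapses out of $Z_{j-1}$ extend over cells attached within the respective subcomplexes. Your explicit dimension bookkeeping, $\dim Z_j\le\max(\dim Z_{j-1},\dim e_j+1)\le n+1$, is a welcome elaboration of a point the paper leaves implicit.
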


\begin{proof} We proceed by induction on the number of cells $d$. If $d=1$, the assertion follows by Lemma \ref{elementary internal}.
 Suppose that the statement holds for $d\geq 1$. Thus, we have the following sequence of expansions and collapses
 \[K\cup \bigcup_{i=1}^d e_i \ex Z_d \co L\cup 
 \bigcup_{i=1}^d \tilde e_i.\]
There is a strong deformation retract $\displaystyle \tilde r_{d}:Z_{d}\to L\cup 
 \bigcup_{i=1}^d \tilde e_i$ and an inclusion $\imath_{d}:\displaystyle K\cup \bigcup_{i=1}^d
  e_i \to Z
  _{d}$. Let $\displaystyle \varphi_{d+1}:\partial D_{d+1}\to K\cup \bigcup_{i=1}^d e_i$  be the 
 attaching map of the cell $e_{d+1}$.
Define 
 $\displaystyle \tilde \varphi_{d+1}:\partial D_{d+1}\to L\cup \bigcup_{i=1}^d
 \tilde e_i$ as 
$\tilde \varphi_{d+1}:=\tilde r_{d}\imath_d\varphi_{d+1}$.
If $\displaystyle \tilde \imath_{d}: L\cup 
 \bigcup_{i=1}^d \tilde e_i\to Z_d$ denotes the inclusion, then the maps $\imath_{d} \varphi_{d+1} $ and $\tilde \imath_{d}\tilde \varphi_{d+1} $ are homotopic.
Let $H:\partial D_{d+1}\times I\to Z_d$ be the homotopy $\imath_{d} \varphi_{d+1}\simeq_H \tilde \imath_{d}\tilde \varphi_{d+1} $. Define \[Z_{d+1} := Z_d \cup_{\imath_{d+1}\varphi_{d+1}} D_{d+1} \cup_{\tilde \imath_{d+1}\tilde \varphi_{d+1}} D_{d+1} \cup_H  (D_{d+1}\times I).\]
By definition,  there are elementary moves
\[Z_d \cup_{\imath_{d+1}\varphi_{d+1}} D_{d+1}  \ee  Z_{d+1}\ce Z_d \cup_{\tilde \imath_{d+1}\tilde \varphi_{d+1}} D_{d+1}.\]
Since $\displaystyle \Img(\imath_{d+1}\varphi_{d+1})\subseteq K \cup \bigcup_{i=1}^d e_i$, the collapse $\displaystyle Z_d\co K \cup \bigcup_{i=1}^d e_i$ induces a collapse $\displaystyle Z_d \cup_{\imath_{d+1}\varphi_{d+1}} D_{d+1}\co K \cup \bigcup_{i=1}^{d+1} e_i$.
Analogously, $\displaystyle Z_d \cup_{\tilde \imath_{d+1}\tilde \varphi_{d+1}} D_{d+1}\co L \cup \bigcup_{i=1}^{d+1} e_i$ and the result follows.
\end{proof}

\begin{corollary}\label{K def K/L}
Let $K$ be an $n$-dimensional CW-complex and let $L\leq K$ be a subcomplex of $K$ such that $L\co *$. Then $K\nplusonedef K/L$.
\end{corollary}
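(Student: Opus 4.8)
The plan is to view $K$ as the complex obtained from $L$ by attaching the cells of $K$ that do not lie in $L$, and then to feed the collapse $L\co *$ into Proposition \ref{internal collapse}. First I would list the open cells of $K\smallsetminus L$ as $e_1,e_2,\dots,e_d$, ordered so that $\dim(e_i)\le\dim(e_{i+1})$; because $K$ is a CW-complex this guarantees that the attaching map $\varphi_j:\partial D_j\to L\cup\bigcup_{i<j}e_i$ of $e_j$ lands in the union of $L$ with the previously attached cells, so that $K=L\cup\bigcup_{i=1}^d e_i$ is a legitimate filtration. All cells involved have dimension at most $n=\dim(K)$, and $\dim(L)\le n$ as well.

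Next I would apply Proposition \ref{internal collapse}, taking the ground complex to be $L$, the target of the collapse to be the point $*$ (using the hypothesis $L\co *$), and the attached cells to be $e_1,\dots,e_d$. This produces intermediate complexes $Z_1\leq\dots\leq Z_d$ of dimension at most $n+1$ together with the chain $K=L\cup\bigcup_i e_i \ex Z_d \co *\cup\bigcup_i\tilde e_i$, whence $K\nplusonedef *\cup\bigcup_{i=1}^d\tilde e_i$. The dimension stays at most $n+1$ because the only dimension-raising step in the construction is the formation of a mapping cylinder $D_j\times I$ over a cell $e_j$ of dimension $\le n$, contributing dimension at most $n+1$.

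The remaining, and genuinely substantive, step is to identify the resulting complex $*\cup\bigcup_i\tilde e_i$ with the quotient $K/L$. By construction the attaching map of $\tilde e_j$ is $\tilde\varphi_j=\tilde r_{j-1}\imath_{j-1}\varphi_j$, the image of $\varphi_j$ under the strong deformation retraction that Proposition \ref{internal collapse} builds. I would argue that this retraction collapses all of $L$ onto the single point $*$ while restricting to a homeomorphism from each open cell $e_i$ onto $\tilde e_i$; this is precisely how the internal collapses of Lemma \ref{elementary internal} act, changing the attaching map of a cell without altering its interior. Consequently $\tilde r_{j-1}\imath_{j-1}$ agrees, under the identification $e_i\leftrightarrow\tilde e_i$ and $L\leftrightarrow *$, with the quotient map $q:K\to K/L$, so that $\tilde\varphi_j=q\,\varphi_j$ is exactly the attaching map of the cell $q(e_j)$ in $K/L$. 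Therefore $*\cup\bigcup_i\tilde e_i$ and $K/L$ coincide as CW-complexes, and the deformation above yields $K\nplusonedef K/L$.

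I expect the main obstacle to be this last identification: one must verify that the inductively defined attaching maps $\tilde\varphi_j$, produced through iterated mapping-cylinder expansions and collapses, really do reproduce the attaching maps induced by the literal quotient map $q$, rather than merely giving a homotopy equivalent complex. A secondary point to watch is that the hypothesis $\dim(L)\le\dim(e_i)$ of Proposition \ref{internal collapse} can fail when $K\smallsetminus L$ contains cells of dimension smaller than $\dim(L)$; I would note that the $(n+1)$ dimension bound is nonetheless preserved, since it depends only on every cell having dimension at most $n$, so the argument of Proposition \ref{internal collapse} applies verbatim.
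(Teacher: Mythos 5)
Your proposal is correct and is essentially the argument the paper intends: the corollary is stated without proof immediately after Proposition \ref{internal collapse}, and the intended deduction is exactly yours — write $K = L \cup \bigcup_i e_i$, apply the proposition with the collapse $L \co *$, and identify $* \cup \bigcup_i \tilde e_i$ with $K/L$. Your two caveats (the harmless failure of the hypothesis $\dim(L)\le\dim(e_i)$, and the verification that the inductively defined attaching maps agree with those induced by the quotient map) are the right points to flag, and your resolutions of both are sound.
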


\begin{definition}
Under the conditions of Proposition \ref{internal collapse}, we say that there is a \textit{internal collapse} from
$\displaystyle K\cup \bigcup_{i=1}^d e_i$ to $\displaystyle L\cup 
 \bigcup_{i=1}^d \tilde e_i$.
\end{definition}

We next prove that the composition of a sequence of
 internal collapses in $n$-dimensional complexes is an $(n+1)$-deformation.

\begin{theorem}\label{several internal collapses}
Let $L$ be a CW-complex on dimension $n$. Let  $L_0\leq K_0\leq L_1\leq K_1\leq\dots \leq L_{N}\leq K_{N}\leq L_{N+1}=L$
be a sequence of CW-subcomplexes of $L$ such that 
$K_j\co L_j$ for all $j=0, 1,\dots N$. If $\displaystyle L_{j+1}=K_j\cup \bigcup_{i=1}^{d_j} 
e_i^j$, then there are internal collapses from 
$L_{j+1}$ to $\displaystyle L_{j}\cup \bigcup_{i=1}^{d_j} 
\tilde e_i^j$
 for all $j=0,1, \dots, N$ that
induce an $(n+1)$-deformation 
\[
L\nplusonedef
L_0\cup \bigcup_{j=0}^{N} \bigcup_{i=1}^{d_j} \tilde e_i^j.\footnote{Here, the attaching maps of the cells $\tilde e_i^j$ are induced by the internal collapses (see Proposition \ref{internal collapse}).}
\]
\end{theorem}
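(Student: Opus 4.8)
The plan is to peel the chain from the top downward, applying Proposition \ref{internal collapse} once at each stage $j=N,N-1,\dots,0$ while carrying along, as extra attached cells, all the cells that were introduced at the higher stages. Each such application produces an internal collapse, hence an $(n+1)$-deformation, and concatenating sequences of elementary collapses and expansions whose intermediate complexes all have dimension at most $n+1$ again yields such a sequence; so the composite is the desired $(n+1)$-deformation.

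To make the induction go through cleanly, I would prove the following slightly stronger statement by induction on $N$: for any chain as in the hypothesis and any finite family $\mathcal E$ of cells attached on top of $L_{N+1}$, ordered by non-decreasing dimension and satisfying the dimension hypotheses of Proposition \ref{internal collapse} relative to $K_N$, one has
\[
L_{N+1}\cup\mathcal E \ \nplusonedef\ L_0\cup\bigcup_{j=0}^{N}\bigcup_{i=1}^{d_j}\tilde e_i^{\,j}\ \cup\ \tilde{\mathcal E},
\]
where the attaching maps of the tilded cells are the composites of the strong deformation retracts induced by the successive collapses. The theorem is the case $\mathcal E=\emptyset$. For the base case $N=0$, I would apply Proposition \ref{internal collapse} directly to $K_0\co L_0$ with the cells $e_1^0,\dots,e_{d_0}^0$ attached first and the cells of $\mathcal E$ attached afterwards, obtaining the $(n+1)$-deformation from $L_1\cup\mathcal E$ to $L_0\cup\bigcup_i\tilde e_i^{\,0}\cup\tilde{\mathcal E}$.

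For the inductive step, since $L_{N+1}=K_N\cup\bigcup_i e_i^N$, I would view $L_{N+1}\cup\mathcal E$ as $K_N$ with the cells $e_1^N,\dots,e_{d_N}^N$ and then those of $\mathcal E$ attached in order; Proposition \ref{internal collapse} applied to $K_N\co L_N$ then gives
\[
L_{N+1}\cup\mathcal E \ \nplusonedef\ L_N\cup\bigcup_{i=1}^{d_N}\tilde e_i^{\,N}\cup\tilde{\mathcal E}.
\]
The right-hand side is $L_N$ with the family $\mathcal E'=\{\tilde e_i^{\,N}\}\cup\tilde{\mathcal E}$ attached, so applying the inductive hypothesis to the shorter chain $L_0\le K_0\le\dots\le L_N$ with extra cells $\mathcal E'$ carries the deformation down to $L_0\cup\bigcup_{j,i}\tilde e_i^{\,j}\cup\tilde{\mathcal E}$, and composing the two $(n+1)$-deformations proves the claim. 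The dimension hypotheses are preserved along the way: each $L_j,K_j$ is a subcomplex of the $n$-complex $L$ and so has dimension at most $n$, passing a cell through an internal collapse leaves its dimension unchanged, and $K_{N-1}\le K_N$ gives the lower bound $\dim K_{N-1}\le\dim K_N\le\dim e$ needed for the cells of $\mathcal E'$; every invocation of Proposition \ref{internal collapse} introduces intermediate complexes of dimension at most $n+1$, so the whole composition stays within dimension $n+1$.

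The main obstacle I anticipate is not any individual deformation—each is handed to us by Proposition \ref{internal collapse}—but the bookkeeping. One must verify that after the stage-$(j+1)$ collapse the accumulated higher cells are genuinely attached to $L_{j+1}=K_j\cup\bigcup_i e_i^{\,j}$, so that they can legitimately be regarded as cells attached to $K_j$ and carried through the next collapse; that at each stage the full list of cells can be arranged in non-decreasing order of dimension with $\dim K_j$ as a lower bound, so the hypotheses of Proposition \ref{internal collapse} hold; and that the single-tilde notation $\tilde e_i^{\,j}$ of the statement is understood, as flagged in the footnote, to denote each cell after \emph{all} the internal collapses in which it participates, its attaching map being the corresponding composite of induced retracts.
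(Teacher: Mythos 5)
Your proof is correct, but it takes a genuinely different route from the paper's. The paper also inducts on $N$, but bottom-up: the inductive hypothesis first gives the deformation $L_0\cup\bigcup_{j<N}\bigcup_i\tilde e_i^j\nplusonedef L_N$, this deformation is then realized as $L_0\cup\dots\ex Z\co L_N$ through a single auxiliary complex $Z$ of dimension $n+1$, one application of Proposition \ref{internal collapse} to $K_N\co L_N$ converts $L_{N+1}$ into $L_N\cup\bigcup_i\tilde e_i^N$, and finally those new cells are transported through $Z$ down to $L_0\cup\bigcup_{j<N}\bigcup_i\tilde e_i^j$ by ``an argument analogous to the proof of Proposition \ref{internal collapse}'' --- not a literal application, since $\dim Z=n+1$ violates that proposition's hypothesis $\dim K\le\dim e_i$. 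You instead peel top-down with a strengthened induction hypothesis that carries the accumulated cells $\mathcal E$ explicitly; every geometric step is then a verbatim invocation of Proposition \ref{internal collapse} applied to an honest collapse $K_j\co L_j$ of subcomplexes of the $n$-complex $L$, and no auxiliary mapping-cylinder-type complex or ad hoc variant of the proposition is needed. What your version buys is a cleaner, self-contained recursion; what the paper's buys is that the theorem is proved exactly as stated, without reformulation. Both arguments carry the same residual bookkeeping debt --- checking that the interleaved family of cells fed to Proposition \ref{internal collapse} at each stage can be arranged to meet its dimension-ordering hypothesis (only the upper bound $\dim e\le n$ is actually what forces the $(n+1)$ bound on the intermediate complexes) --- and you flag this explicitly, which the paper does not.
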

\begin{proof}
We proceed by induction on $N$. For $N=0$, the statement follows from Proposition \ref{internal collapse}. For $N\geq 1$, by inductive hypothesis 
 there is an $(n+1)$-deformation
  \[
\displaystyle L_0\cup \bigcup_{j=0}^{N-1} \bigcup_{i=1}^{d_j} \tilde e_i^j \nplusonedef L_N.
 \]
Moreover, there exist a CW-complex $Z$ of dimension $n+1$ such that 
 \[
\displaystyle L_0\cup \bigcup_{j=0}^{N-1} \bigcup_{i=1}^{d_j} \tilde e_i^j \ex Z\co L_N
 \]
(see for instance \cite[Ch.II]{MR0362320}). Suppose that $L_N\ex K_N \subseteq L_{N+1}$, where $\displaystyle L_{N+1}=K_N\cup \bigcup_{j=1}^{d_N} 
e_j^N$. 
By Proposition \ref{internal collapse}, the internal collapse from $L_{N+1}$ to $\displaystyle L_{N}\cup \bigcup_{i=1}^{d_N} 
\tilde e_i^N$ induces an $(n+1)$-deformation (where the attaching maps of $\tilde e_j^N$ are induced from the attaching maps of $e_j^N)$.
On the other hand, an argument analogous to the proof of Proposition \ref{internal collapse} shows that it is possible to construct a sequence of CW-complexes $Z\leq Z_1\leq Z_2\leq \dots \leq Z_{d_N}$ of dimension less than or equal to $n+1$ such that for every $d=1, 2, \dots, d_N$
 \[
 \displaystyle \left(L_0\cup \bigcup_{j=0}^{N-1} \bigcup_{i=1}^{d_j} \tilde e_i^j\right)\cup \bigcup_{i=1}^d
\tilde{\tilde{e}}_i^N \ex Z_j\co L_N \cup \bigcup_{i=1}^d 
\tilde  e_i^N
 \] where the attaching map $\displaystyle \dbtilde{\varphi}_d^N:\partial D_j\to L\cup \bigcup_{i<d}
\dbtilde{e}_i$ of the cell $\dbtilde{e}_d^N$ is induced by the attaching map of $\tilde e_d^N$.
 
We have proved that there is an $(n+1)$-deformation from $L_{N+1}$ to $\displaystyle \left(L_0\cup \bigcup_{j=0}^{N-1} \bigcup_{i=1}^{d_j} \tilde e_i^j\right)\cup \bigcup_{i=1}^{d_N} 
\dbtilde{e}_i^N$ (equivalently, to $\displaystyle L_0\cup \bigcup_{j=0}^{N} \bigcup_{i=1}^{d_j} \tilde e_i^j$ by abuse of notation).
\end{proof}

In what follows, we interpret discrete Morse theory in terms of internal collapses. In particular, we deduce a combinatorial method to simplify the cell structure of a regular CW-complex preserving its simple homotopy type, with bounds in the deformation.
Every discrete Morse function $f$ on 
a regular CW-complex $K$ can be described  as a sequence of internal collapses 
in its CW-structure as follows.

\begin{lemma}\label{matching - internal collapses}
 Let $K$ be a regular CW-complex. Then, $M$ is an acyclic matching in $\X(K)$ with unmatched set of cells $C$ if and only if there exist a sequence of subcomplexes of $K$
\begin{equation}\tag{$*_1$}\label{internal collapses sequence}
 K_0\leq L_1\leq K_1\dots \leq K_{N-1}\leq L_{N-1}\leq K_N = K
\end{equation}
 such that $K_j\co L_{j}$ for all $1\leq j\leq N$ and the set of cells of $K$ that was not collapsed in any of the collapses $K_j\co L_j$ is equal to $C$.
\end{lemma}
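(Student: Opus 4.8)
The plan is to prove the two implications separately. The direction from an acyclic matching to a collapse sequence is the substantive one, while the converse reduces to bookkeeping together with a single acyclicity argument.

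For the forward implication I would first realize $M$ by a discrete Morse function $f\colon K\to\RR$ whose set of critical cells is exactly $C$ (this is precisely Chari's characterization \cite{MR1766262} recalled above), and after a generic perturbation assume $f$ is injective on cells. The key observation is that the defining condition of a Morse function forces every cell to have at most one face of larger $f$-value (its matched partner, when it is the upper cell of a pair) and at most one coface of smaller $f$-value. Consequently, as $c$ increases the level subcomplex $K(c)$ grows either by a single critical cell $\sigma$, when $c$ crosses $f(\sigma)$, or by a matched pair $(\rho,\tau)\in M$; and in the latter case $\rho$ is a \emph{free face} of $\tau$ inside $K(c)$, so that $K(c)\ce K(c^-)$. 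Listing the critical cells in increasing order of value as $\sigma_1,\dots,\sigma_s$, and noting that the global minimum of $f$ is forced to be a critical vertex (so the base is empty), I would set $L_j$ to be the level subcomplex just after $\sigma_j$ is attached and $K_j$ the level subcomplex just before $\sigma_{j+1}$ is attached. Theorem \ref{teo morse}(a) then yields $K_j\co L_j$, since only matched pairs are crossed in between; each passage $K_{j-1}\le L_j$ adds the single critical cell $\sigma_j$; and the cells removed in the collapses $K_j\co L_j$ are precisely the matched cells, so the set of cells never collapsed equals $C$.

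For the converse, given the sequence \eqref{internal collapses sequence}, I would refine each collapse $K_j\co L_j$ into its elementary collapses and let $M$ be the set of all free pairs $(\rho,\tau)$ removed, with $\rho\prec\tau$. A cell is removed at most once and is always paired with a cell of adjacent dimension, so $M$ is a matching whose unmatched cells are exactly those never removed, i.e.\ $C$. Acyclicity is the only nontrivial point: a directed cycle in $\H_M(\X(K))$ must alternate unmatched up-covers and reversed matched covers within two consecutive dimensions, hence is a sequence $\rho_1\prec\tau_1\succ\rho_2\prec\cdots\succ\rho_1$ with $(\rho_i,\tau_i)\in M$ and $\rho_{i+1}\neq\rho_i$. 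Recording the time $s_i$ at which the pair $(\rho_i,\tau_i)$ is collapsed, the face $\rho_{i+1}\prec\tau_i$ is still present at the moment $(\rho_i,\tau_i)$ is removed, which forces $s_{i+1}>s_i$ for every $i$; running this around the cycle gives $s_1<s_1$, a contradiction.

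The step I expect to be the main obstacle is controlling the growth of the level subcomplexes in the forward implication. Because $K(c)$ is a union of closed cells, one must rule out that several cells enter simultaneously in an uncontrolled way, the delicate case being a matched pair whose two $f$-values straddle a critical value. This is exactly where the Morse inequality on faces and cofaces is needed: it guarantees that each non-critical passage collapses precisely the matched pairs and each critical passage attaches precisely one cell, so that the subcomplexes $K_j,L_j$ are well defined and the partition into collapsed versus critical cells is consistent.
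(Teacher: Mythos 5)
Your argument is correct and follows essentially the same route as the paper's: the forward direction realizes $M$ by a discrete Morse function with critical set $C$ (the paper builds one explicitly from a linear extension of $\H_M(\X(K))$ in which matched cells are consecutive, while you invoke Chari's characterization plus a perturbation) and then reads the subcomplexes $K_j, L_j$ off the level-subcomplex filtration using Theorem \ref{teo morse}(a); the converse defines $M$ from the free pairs of the elementary collapses and rules out a directed cycle by the same timing contradiction that the paper phrases via a minimal subcomplex of the sequence containing all cells of the cycle. There are no substantive differences in approach.
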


\begin{proof}
Given an acyclic matching $M$ in $\X(K)$, there is a linear extension $\L$ of the (order induced by the) directed acyclic graph $\H_M(\X(K))$ such that if $(e,e')\in M$, the cells $e,e'$ follow consecutively in total order $\L$ (see \cite[Thm. 11.2.]{MR2361455}). Let $\L_M$ be the ordering induced by $\L$ with the additional relations $e=_{\L_M}e'$ if $(e,e')\in M$.
It is easy to see that $f_M:K\to \RR$ defined by $f_M(e) = |\{e'\in K: e'\leq_{\L_M} e\}|$ is a discrete Morse function with critical cells the set $C$ of unmatched cells of $M$.
If $ c_1 \leq c_2\leq \dots \leq c_N\in \NN_0$ are the images under $f_M$ of the critical cells of $K$, define $L_j = K(c_j)$ for $1\leq j\leq N$, $K_{j-1} = K(c_{j}-1)$ for $1\leq j\leq N-1$ and $K_N = K$. Notice that $L_1\leq K_1\leq\dots \leq L_{N-1}\leq K_{N-1}\leq L_{N}\leq K_N$ is a sequence of CW-subcomplexes of $K$ and by Theorem \ref{teo morse},  $K_j\co L_j$ for all $1\leq j\leq N$. The set of cells of $K$ that are not involved in any of the collapses $K_{j}\co L_j$ is $\{e\in K(c_j)\smallsetminus K(c_j-1): 1\leq j \leq N\} = C$, the set of critical cells of $f_M$.

Conversely, given a sequence \eqref{internal collapses sequence} of subcomplexes of a regular CW-complex $K$, define $M$ as the set of pairings of cells $(e^{k-1},e^k)$ of $K$ associated to each elementary collapse in  $K_j\co L_{j}$ for all $1\leq j\leq N$.
Since each cell is involved in at most one elementary collapse,  $M$ is a matching in $\X(K)$. Suppose that there is a simple cycle 
\begin{equation}\tag{$*_2$}\label{cycle}
e^{k-1}_1\prec e^k_1\succ e^{k-1}_2\prec e^k_2\succ\dots
e^{k-1}_l\prec e^k_l = e^k_0 \succ e^{k-1}_1
\end{equation}
in $\H_M(\X(K))$ where $(e^{k-1}_i, e^k_i) \in M$ for all $1\leq i\leq l$.
Let $K_j$ be the minimal subcomplex of $K$ in the sequence  \eqref{internal collapses sequence} that contains all the cells of the cycle \eqref{cycle}. For every
pair of cells $(e^{k-1}_i, e^k_i)$ in  \eqref{cycle}, $e^{k-1}_i$ is also a face of  $e^{k}_{i-1}$, which in turn is not  part of any other elementary collapse along with a higher dimensional cell. So none of the pairs $(e^{k-1}_i, e^k_i)$  determines an elementary collapse in $K_j\co L_j$.  This contradicts the minimality of $K_j$. Hence, $M$ is an acyclic matching in $\X(K)$.
\end{proof}

\begin{remark}\label{decreasing dimension}
Given a regular CW-complex $K$ and an acyclic matching in $\X(K)$, the associated sequence of internal collapses \eqref{internal collapses sequence} in $K$ of Lemma \ref{matching - internal collapses} can be performed in a way that all the collapses involved are performed in decreasing dimension. That is, if $(e^{k-1}, e^k)$ and $(e^{k'-1}, e^{k'})$ are pair of cells involved in collapses $K_j\co L_j$ and $K_{j'}\co L_{j'}$ respectively with $j'\leq j$, then the dimensions $k, k'$ satisfy $k'\leq k$. This construction can be achieved by considering in the proof of Lemma \ref{matching - internal collapses} a linear extension $\L$ of the order induced by $\H_M(\X(K))$ such that it also respects the order given by the dimension (the existence of such $\L$ follows by a slight modification of proof of \cite[Thm. 11.2.]{MR2361455}).
\end{remark}

We next state the simple homotopy version of Theorem \ref{teo morse}, that provides an explicit construction of the 
Morse CW-complex and establishes bounds on the dimension of the deformation.

\begin{theorem}\label{teo morse deformacion}
 Let $K$ be a regular CW-complex of dimension $n$ and let $M$  be an acyclic matching in $\X(K)$. Then $M$ induces a sequence of subcomplexes of $K$
\[
 K_0\leq L_1\leq K_1\dots \leq K_{N-1}\leq L_{N-1}\leq K_N = K\]
 such that  $K_j\co L_{j}$ for all $1\leq j\leq N$ and the set of cells of $K$ non-collapsed in $K_j\co L_j ~\forall j$  corresponds to the unmatched cells in $M$. 
Moreover, 
 if $K_M$ is
the CW-complex obtained after performing in $K$ the sequence of internal collapses induced by $M$, then $K\nplusonedef K_M$. In particular, $K_M$ has exactly one cell of dimension $k$ for every unmatched cell of dimension $k$ in $M$. 
\end{theorem}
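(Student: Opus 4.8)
The plan is to assemble Theorem \ref{teo morse deformacion} from the machinery already established, treating it as a packaging of Lemma \ref{matching - internal collapses} and Theorem \ref{several internal collapses}. First I would invoke Lemma \ref{matching - internal collapses} directly: the acyclic matching $M$ yields the sequence of subcomplexes $K_0\leq L_1\leq K_1\leq\dots\leq K_N=K$ with $K_j\co L_j$ for all $j$, and with the non-collapsed cells coinciding precisely with the unmatched cells of $M$. This gives the first assertion verbatim, so no new argument is needed there. The substantive content is the deformation claim $K\nplusonedef K_M$, and here I would recognize the sequence as exactly the input required by Theorem \ref{several internal collapses}, after relabeling.

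Next I would reconcile the indexing. Theorem \ref{several internal collapses} takes a nested sequence $L_0\leq K_0\leq L_1\leq\dots\leq L_{N+1}=L$ with $K_j\co L_j$ and $L_{j+1}=K_j\cup\bigcup_{i=1}^{d_j}e_i^j$, and produces an $(n+1)$-deformation from $L$ down to $L_0\cup\bigcup_{j=0}^N\bigcup_{i=1}^{d_j}\tilde e_i^j$. So I would identify $L$ with $K$, identify the $L_j,K_j$ of the present statement with those in the theorem, and observe that the cells $e_i^j$ appearing between $K_{j-1}$ and $L_j$ (equivalently, $K_j$ and $L_{j+1}$) are precisely the cells of $K$ that survive each collapse — that is, the critical/unmatched cells. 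Feeding this into Theorem \ref{several internal collapses} yields an $(n+1)$-deformation from $K$ to $L_0\cup\bigcup_{j}\bigcup_{i}\tilde e_i^j$, and I would \emph{define} $K_M$ to be exactly this target complex, so that $K\nplusonedef K_M$ holds by construction.

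The final claim about the cell count is then a bookkeeping observation: by Lemma \ref{matching - internal collapses} the surviving cells are indexed bijectively by the unmatched cells of $M$, and each surviving cell $e_i^j$ contributes exactly one cell $\tilde e_i^j$ to $K_M$ via the internal-collapse construction, preserving dimension since the deformation retracts and homotopies in Proposition \ref{internal collapse} only alter attaching maps, not cell dimensions. I would note that Remark \ref{decreasing dimension} guarantees the collapses can be arranged in decreasing dimension, which ensures the hypothesis $\dim(K)\leq\dim(e_i)\leq\dim(e_{i+1})$ of Proposition \ref{internal collapse} is met when each block of cells is attached; this is what legitimizes applying Theorem \ref{several internal collapses} in the first place.

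The main obstacle I anticipate is purely notational rather than conceptual: making the dimension-monotonicity hypothesis of Proposition \ref{internal collapse} align with the order in which cells of a \emph{single} level set are reattached. The cells surviving across one collapse $K_j\co L_j$ need not come in nondecreasing dimension automatically, so I would lean on Remark \ref{decreasing dimension} to reorder the global sequence of internal collapses so that dimensions decrease as the collapses proceed (equivalently, the surviving cells are reattached in nondecreasing dimension), which is exactly the hypothesis the propositions require. Once that ordering is fixed, the proof reduces to citing Theorem \ref{several internal collapses} and reading off the conclusions, so the argument is short; the care lies entirely in matching the two indexing conventions and in verifying that the dimension bound $n+1$ propagates through the iterated construction, both of which are already handled by the cited results.
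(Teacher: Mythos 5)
Your proposal is correct and follows exactly the paper's route: the paper proves this theorem in one line as a consequence of Lemma \ref{matching - internal collapses} and Theorem \ref{several internal collapses}, which are precisely the two results you combine. Your additional care about reconciling the indexing and invoking Remark \ref{decreasing dimension} to satisfy the dimension-monotonicity hypothesis is a faithful (and slightly more explicit) elaboration of what the paper leaves implicit.
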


\begin{proof}
It is consequence of Lemma \ref{matching - internal collapses} and Theorem \ref{several internal collapses}.
\end{proof}

We focus now in the explicit combinatorial description of the deformation provided by  Theorem \ref{teo morse deformacion} in the case of 2-complexes.

\begin{remark}\label{combinatorial}
Notice that internal collapses transform 2-dimensional regular CW-complexes into combinatorial complexes. Recall
that a CW-complex $K$ of dimension 2 is called \textit{combinatorial} if
for each 2-cell $e^2$, its attaching map $\varphi:S^1\to K^{(1)}$, seen as a  cellular map by assigning a CW-structure on $S^1$, is a \textit{combinatorial map} (that is, $\varphi$ sends each open
1-cell of $S^1$ either homeomorphically onto an open 1-cell of $K$ or it collapses it to a 0-cell of $K$, see \cite[Ch.II]{hog1993two}). Thus, one
can think of the attaching map of a 2-cell in a combinatorial complex of dimension 2
just as the ordered list of  oriented 1-cells.
Suppose
that there is an elementary internal collapse from the combinatorial 2-complex 
$K \cup \bigcup_{i=1}^n e_i$ to $L \cup \bigcup_{i=1}^n \tilde{e}_i$, in which  $K\ce L$. We endow the CW-complex 
$L \cup \bigcup_{i=1}^n \tilde{e}_i$ with a combinatorial structure as follows.
If $K = L \cup \{x,v\}$, 
where $v$ is a 0-cell and $x$ a 1-cell, then for every 2-cell in $\bigcup_{i=1}^n e_i$ containing an edge $x^\epsilon$ with $\epsilon = 1$ or
$-1$ (where by $x^{-1}$ we mean the 1-cell $x$ traversed with the opposite orientation), modify its  attaching 
map by replacing each occurrence of $x^{\epsilon}$ by 1.
Similarly, if $K = L \cup \{x,e\}$ with $x$ a 1-cell and $e$ a 2-cell with attaching map
$x x_1 \dots x_r$, then
for every 2-cell in $\bigcup_{i=1}^n e_i$ containing edge $x^{\epsilon}$ modify its attaching 
map by replacing each occurrence of $x^{\epsilon}$ by $(x_1 \dots x_r)^{-\epsilon}$
and leaving the remaining cells unchanged.
As a result, after performing a sequence of internal collapses to 
a regular CW-complex $K \cup \bigcup_{i=1}^n e_i$ we obtain a CW-complex $L \cup \bigcup_{i=1}^n \tilde{e}_i$ with a natural combinatorial structure.
\end{remark}

\begin{example}[Torus]\label{example:torus}
 Let $K$ be the oriented regular CW-complex of Figure \ref{fig:torus}.
 \begin{figure}[htb!]
     \centering
     \def\svgwidth{0.28\textwidth}
     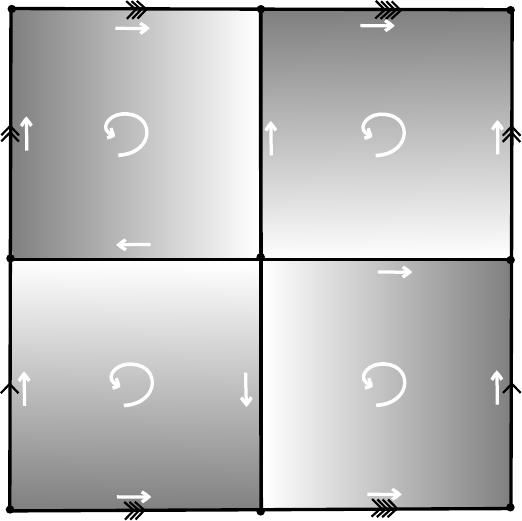
     \caption{CW-regular structure of the torus $T$. Arrows (in white) indicate the orientation of cells. }
     \label{fig:torus}
 \end{figure}
Let
$M$ be the acyclic matching in $\X(K)$ described in Figure \ref{fig:torus_matching}.
By Theorem \ref{teo morse}, the Morse CW-complex $K_M$ has one cell of dimension 0, two cells of dimension 1 and one cell of dimension 2. There is also a formula to compute the incidence number of critical cells of consecutive dimension by means of $M$ (see \cite{MR1612391} and \cite[Thm. 11.13.]{MR2361455}).
However, this information does not determine the homotopy type of $K_M$. In fact, the incidence number of the critical 2-cell into each of the critical 1-cells is 0. But the same number of cells in each dimension and incidence numbers can be observed in the minimal CW-structure of $S^1\vee S^1 \vee S^2$. 
We give next an explicit description of the attaching map of the 2-cell using Theorem \ref{teo morse deformacion}.

 \begin{figure}[h]
     \centering
    \begin{minipage}{0.5 \textwidth}
    \xymatrix@C=1.4em@R=4.1em{
    &&& \circ_{10}&\bullet_9&\bullet_8&\bullet_7\\
    &\bullet_9\ar@{-}[urr]
    \ar@{-}@*{[red]}[urrr]\ar@{-}@*{[red]}[urrr]\ar@{-}@*{[red]}[urrr]\ar@{-}@*{[red]}[urrr]\ar@{-}@*{[red]}[urrr]
    &\bullet_8 \ar@{-}[urr]
    \ar@{-}@*{[red]}[urrr]\ar@{-}@*{[red]}[urrr]\ar@{-}@*{[red]}[urrr]\ar@{-}@*{[red]}[urrr]\ar@{-}@*{[red]}[urrr]
    &\bullet_7\ar@{-}[urr]
    \ar@{-}@*{[red]}[urrr]\ar@{-}@*{[red]}[urrr]\ar@{-}@*{[red]}[urrr]\ar@{-}@*{[red]}[urrr]\ar@{-}@*{[red]}[urrr]
    &\bullet_2\ar@{-}[ul]\ar@{-}[urr]&
    \bullet_3\ar@{-}[ur]\ar@{-}[ull]
    &\circ_4 \ar@{-}[ull]\ar@{-}[ul]
    &\bullet_5\ar@{-}[ullll]\ar@{-}[ulll]
    &\circ_6\ar@{-}[ulll]\ar@{-}[ull]\\
    &&&\bullet_2\ar@{-}[ull]\ar@{-}[ul]\ar@{-}[u]\ar@{-}@*{[red]}[ur]\ar@{-}@*{[red]}[ur]\ar@{-}@*{[red]}[ur]\ar@{-}@*{[red]}[ur] &
    \bullet_3\ar@{-}[ulll]\ar@{-}[u]\ar@{-}@*{[red]}[ur]\ar@{-}@*{[red]}[ur]\ar@{-}@*{[red]}[ur]\ar@{-}@*{[red]}[ur]\ar@{-}@*{[red]}[ur]\ar@{-}[urr]
    &\bullet_5\ar@{-}[ulll]\ar@{-}[ul]\ar@{-}[urrr]\ar@{-}@*{[red]}\ar@{-}@*{[red]}[urr]\ar@{-}@*{[red]}[urr]\ar@{-}@*{[red]}[urr]\ar@{-}@*{[red]}[urr]&
    \circ_1\ar@{-}[u]\ar@{-}[urr]\ar@{-}[ur]\ar@{-}[ul]
    }
\end{minipage}
\hspace{5pt}
     \begin{footnotesize}
     \begin{minipage}{0.34 \textwidth}
    \def\svgwidth{\textwidth}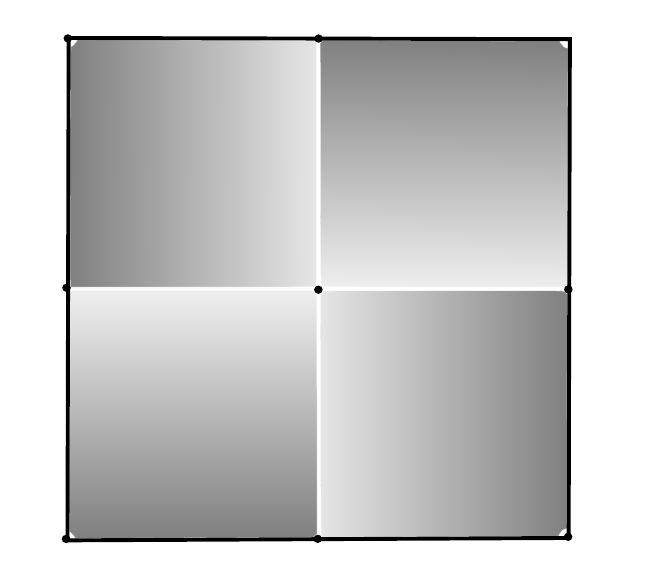
    \end{minipage}
    \end{footnotesize}

     \caption{ Left: Face poset associated to the regular CW-structure of the torus $T$, an acyclic matching $M$ in red and labels corresponding to an induced discrete Morse function $f_M$ (critical cells are the empty bullets). Right: Regular CW-structure of the torus $T$.  The arrows in red indicate pairs of cells in the matching  $M$.
     }
     \label{fig:torus_matching}
 \end{figure}

We denote by $v_i, x_i, e_i$ the 0-cells, 1-cells and 2-cells of $K$ respectively.
Initially, the attaching map of the critical 2-cell $e_{10}$ is \[x_3^{-1}x_5x_{2}^{-1}x_{9}.\] 
After the internal collapse associated to the pair $(x_{9}, e_{9})$ (or 9 for short), the attaching map of $\tilde e_{10}$ becomes \[x_3^{-1}x_5x_{2}^{-1}x_{8}x_5^{-1} x_{4}^{-1}\] (see Figure \ref{fig:torus_collapses} (a)). After performing  the internal collapses associated to the pairs $8$ and $7$, the induced attaching map of the 2-cell $\tilde e_{10}$ \footnote{By abuse of notation, we  denote by $\tilde e$ to all the intermediate stages of the cell obtained from a critical cell $e$ after performing a subsequence of internal collapses.} is 
\[x_3^{-1}x_{5}x_{2}^{-1}x_2x_6x_3x_4x_6^{-1}x_5^{-1}x_4^{-1}\]  (see Figure \ref{fig:torus_collapses} (b) and (c)). Now,  the internal collapses 5, 3, 2 associated to pairs of 0-cells and 1-cells mean that the corresponding edges are collapsed into a point, and they translate into the elimination of each occurrence of $x_2^{\epsilon}, x_3^{\epsilon}, x_5^{\epsilon}$ (with $\epsilon = \pm 1)$  from the description of the attaching map of $\tilde e_{10}$, that ends up being equal to \[x_6x_4x_6^{-1}x_{4}^{-1}.\]
See Figure \ref{fig:torus_collapses} for a picture of the complete procedure of deformation.

 \begin{figure}[htb!]
     \centering
    \begin{footnotesize}

    \begin{minipage}{0.32\textwidth}
        \centering
        \def\svgwidth{\textwidth}
        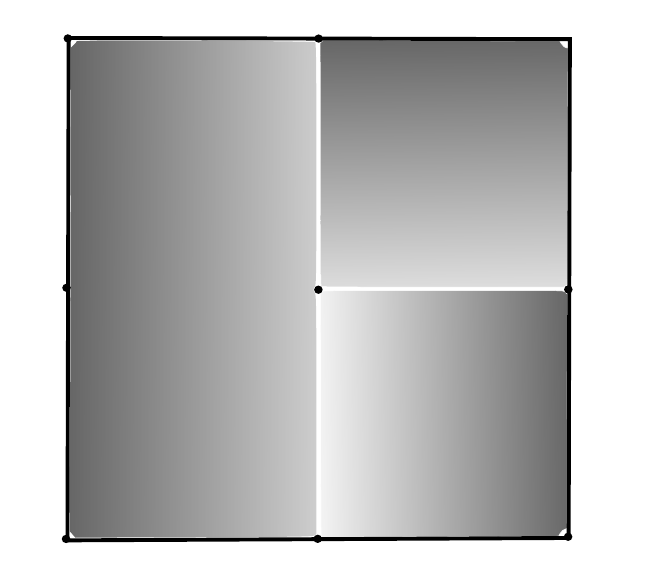
        (a)
    \end{minipage}
    \hspace{3pt}
        \begin{minipage}{0.32\textwidth}
        \centering
        \def\svgwidth{\textwidth}
        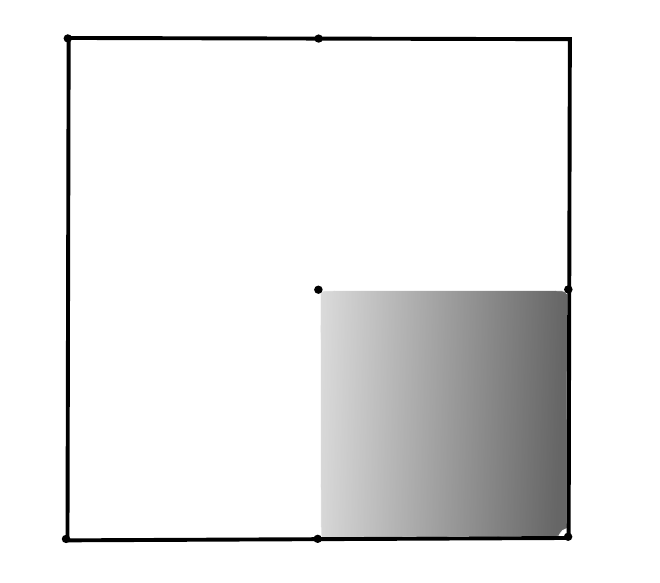
        (b)
        \end{minipage}
        \hspace{3pt}
        \begin{minipage}{0.32\textwidth}
        \centering
        \def\svgwidth{0.99\textwidth}
        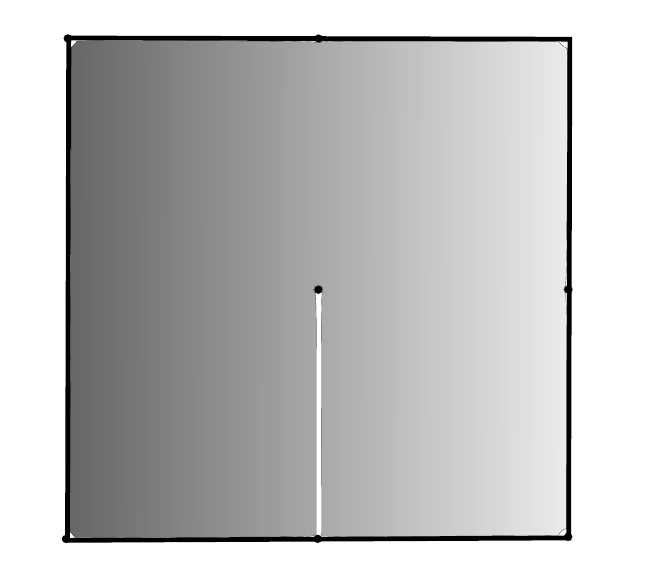
        \vspace{2pt}
        (c)
        \end{minipage}
        
    \vspace{10pt}
    
    \begin{minipage}{0.31\textwidth}
    \centering 
        \def\svgwidth{0.9\textwidth}
    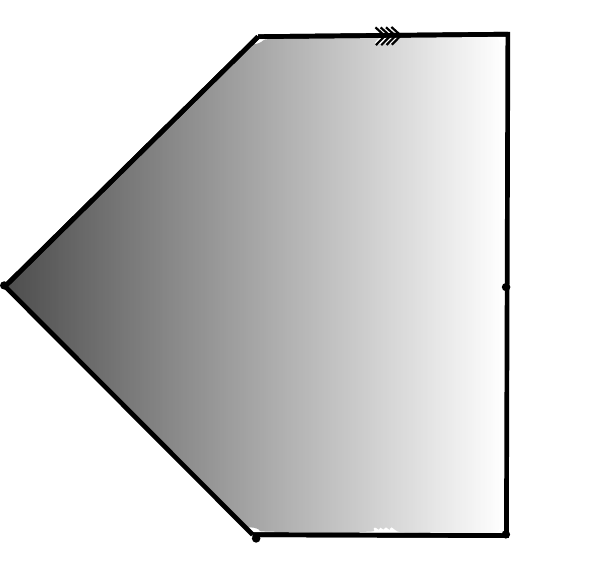
    (d)
    \end{minipage}
    \begin{minipage}{0.36\textwidth}
        \centering 
        \def\svgwidth{0.95\textwidth}
        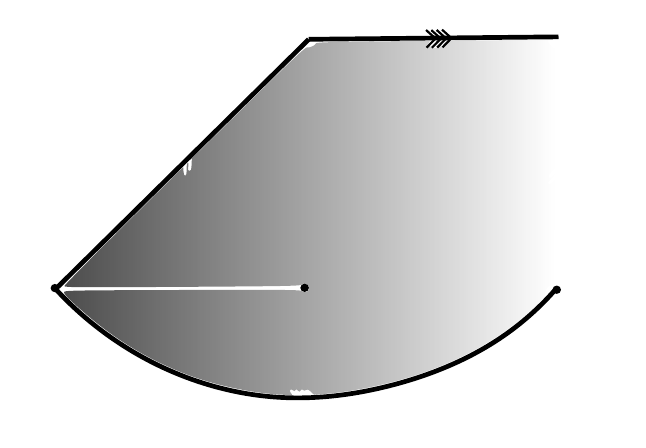
        \vspace{14pt}
        
        (e)
    \end{minipage}
    \begin{minipage}{0.3\textwidth}
        \centering 
        \def\svgwidth{0.7\textwidth}
         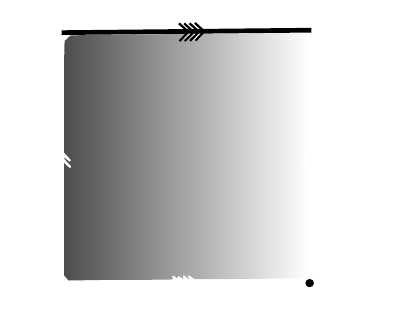
         
         \vspace{44pt}
        (f)
    \end{minipage}
    \end{footnotesize}
     \caption{ Internal collapses in the torus $T$. The CW-complex obtained after performing successively the internal collapses associated to the matched pairs 9 (a), 8 (b),  7 (c), 5 (d),  3 (e)  and  2 (f).}
     \label{fig:torus_collapses}
 \end{figure}

\end{example}

\begin{remark}\label{order internal collapses} Given a regular CW-complex $K$ of dimension 2 and an acyclic matching $M$ in $\X(K)$, the associated sequence of internal collapses \eqref{internal collapses sequence} 
can be performed in any order in the process of construction of $K_M$.
Concretely, if $(v,x)\in M$ is a pair of matched cells of dimension 0 and 1, the associated  internal collapse induces the replacement of $x^{\epsilon}$ by 1 in the combinatorial description of the attaching map of every  2-cell. This operation commutes 
with the transformation induced by  the rest of the internal collapses, since $x$ is not part of any other element of $M$.
Let $(x_1, e_1), (x_2, e_2)\in M$ be two pairs of matched cells of dimension 1 and 2. The attaching maps of $e_i$ in $K$ can be described as $x_i w_i$ where $w_i$ does not contain $x_i$.
It is impossible that both $w_1$ contains an occurrence of $x_2$ and $w_2$ an occurrence of $x_1$, since this would contradict the acyclicity of $\H_M(\X(K))$.
In particular, after performing any of the transformations induced by these internal collapses, the modified attaching map $\tilde e_i$ will still contain a single occurrence of $x_i$, $i=1,2$. So it is possible to perform these two transformations in any order, and they yield the same result.
An inductive application of this step completes the proof for an arbitrary number of transpositions.
\end{remark}

\begin{example}[Higher dimensions] For dimension $n>2$, internal collapses also preserve some of the rigid combinatorial structure of regular CW-complexes. Indeed, they transform 
regular complexes into higher dimensional \textit{combinatorial complexes}, characterized by a discrete description of the attaching maps (see \cite[Ch.II]{hog1993two} for the definitions).
A general algorithmic description of the Morse CW-complex in high dimensions is, however, limited by the lack of a simple combinatorial description of the attaching map of $n$-cells for $n>2$. Notwithstanding, the Morse CW-complex can be still described in  particular examples.

For instance, let $K$ be the 3-dimensional regular CW-complex  constructed as two solid pyramids $K_1$ and $K_2$ whose boundary is jointly identified as in Figure \ref{fig:complex_dim3}. Each pyramid can be seen as the cone on a regular CW-structure $P$ of the projective plane $\RR P^2$.
\begin{figure}[htb!]
    \centering
        \def\svgwidth{0.7\textwidth}
    \begin{footnotesize}
         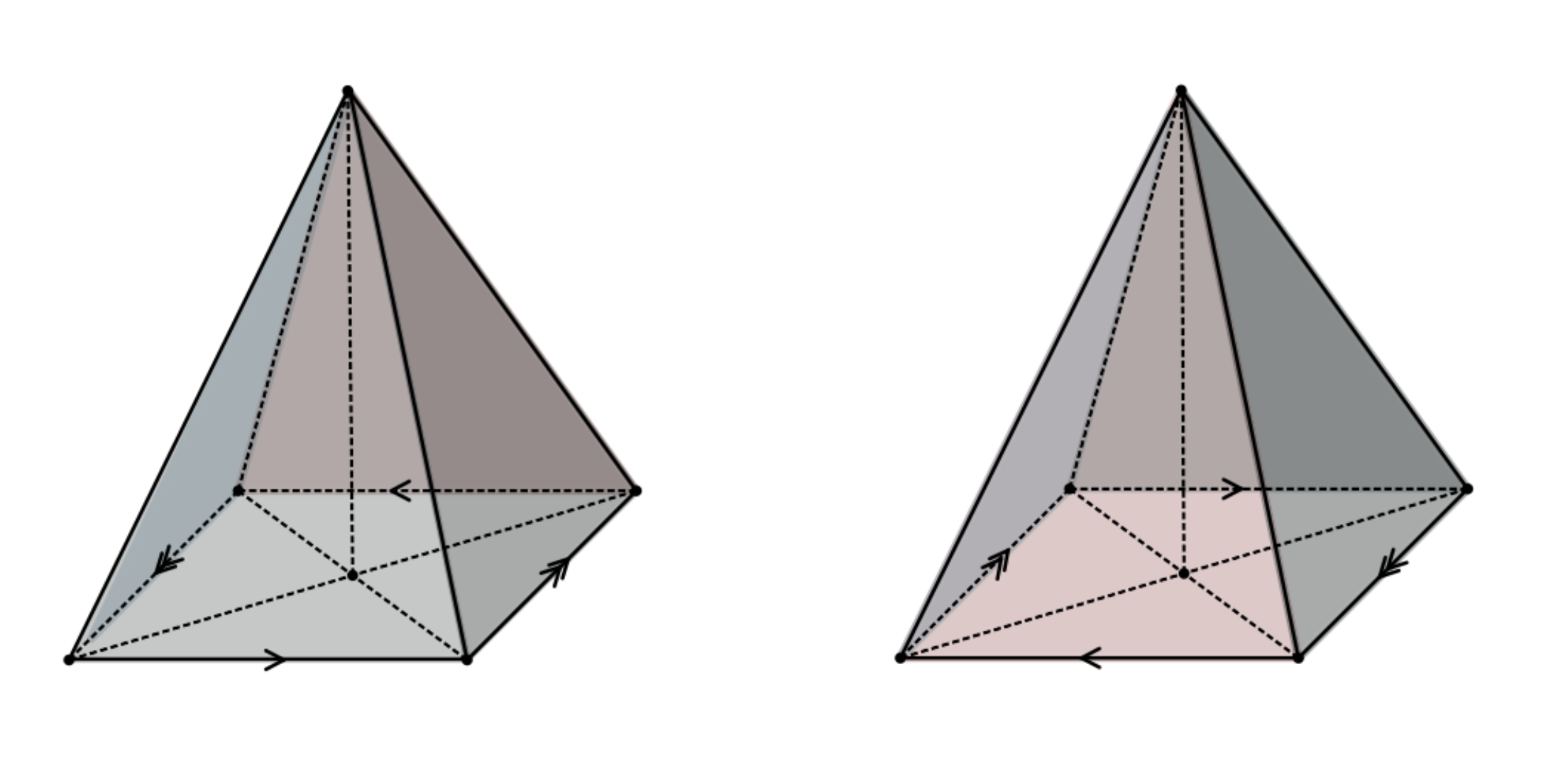
    \end{footnotesize}
    \def\svgwidth{0.7\textwidth}
    \begin{footnotesize}
         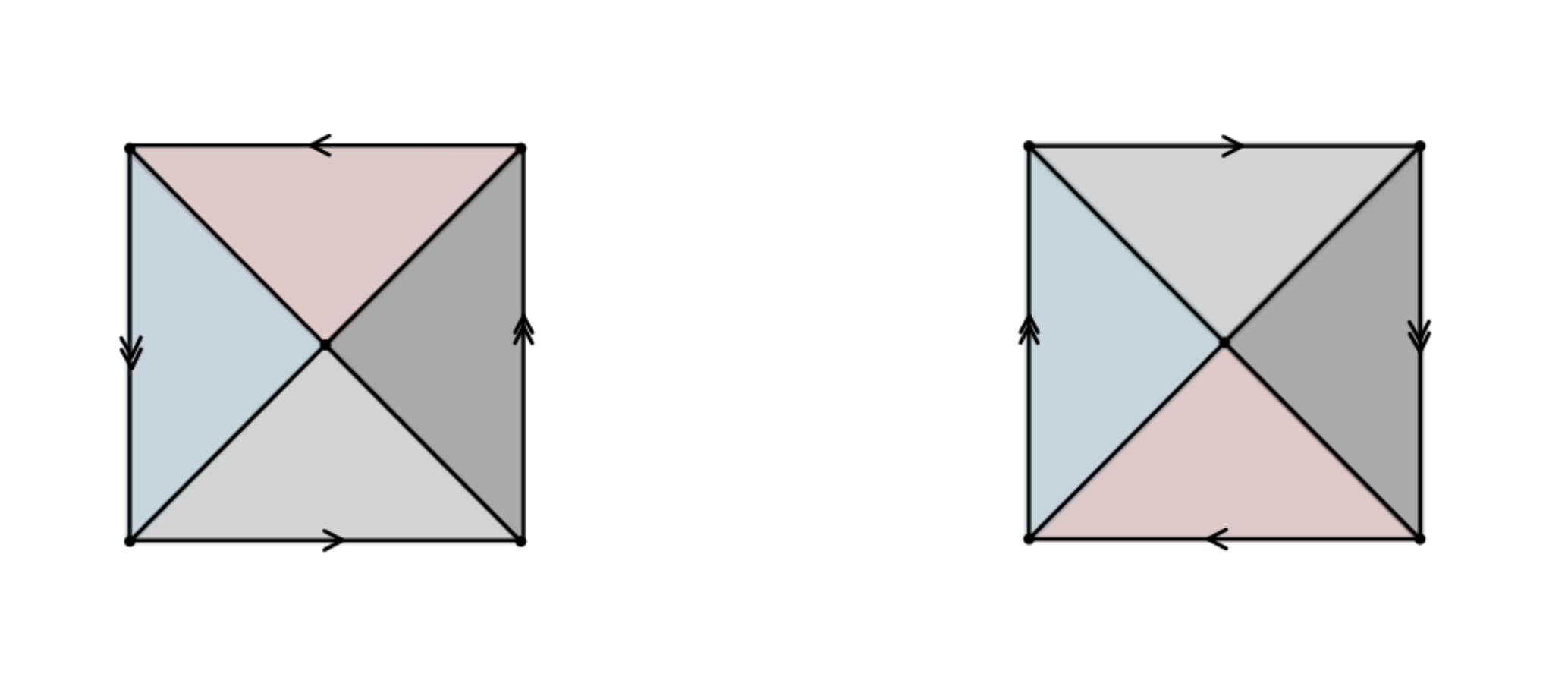
    \end{footnotesize}
    \caption{Top: Two subdivided solid pyramids $K_1$ and $K_2$ whose boundaries are jointly identified. Each pyramid is the cone on a regular CW-structure $P$ of the projective plane $\RR P^2$. Bottom: The top view of $K_1$ and $K_2$ showing the identification of its `top' cells.}
    \label{fig:complex_dim3}
\end{figure}

Let $M$ be the acyclic matching in $\X(K)$ described at Figure \ref{fig:matching_3dim}.
The associated internal collapses, depicted in Figure \ref{fig:internal_collapses_dim3}, simplify the 3-skeleton of $K$ to obtain a CW-complex with a single (critical) 3-cell whose attaching map can be concretely described as follows.
Let  $S$ be the triangulation of $S^2$ induced by the construction of $K$ illustrated at Figure \ref{fig:attaching_map_dim3}. The attaching map of the critical 3-cell in the Morse CW-complex $K_M$ is the quotient of the canonical homeomorphism $\varphi:S^2\to S$ under the identification in $S$ of antipodal points (via the homeomorphism). This construction shows a simple CW-structure of the 3-dimensional real projective space $\RR P^3$.
A minimal CW-structure of $\RR P^3$ (with a single 0-cell $v_2$, a 1-cell $x_{10}$, a 2-cell $e_5$ and a 3-cell $b_8$) is obtained after further performing the internal collapses associated to the matched cells in solid black at Figure \ref{fig:matching_3dim}.

\begin{figure}[htb!]
\centering

 \def\svgwidth{\textwidth}
    \begin{footnotesize}
\begingroup%
  \makeatletter%
  \providecommand\color[2][]{%
    \errmessage{(Inkscape) Color is used for the text in Inkscape, but the package 'color.sty' is not loaded}%
    \renewcommand\color[2][]{}%
  }%
  \providecommand\transparent[1]{%
    \errmessage{(Inkscape) Transparency is used (non-zero) for the text in Inkscape, but the package 'transparent.sty' is not loaded}%
    \renewcommand\transparent[1]{}%
  }%
  \providecommand\rotatebox[2]{#2}%
  \newcommand*\fsize{\dimexpr\f@size pt\relax}%
  \newcommand*\lineheight[1]{\fontsize{\fsize}{#1\fsize}\selectfont}%
  \ifx\svgwidth\undefined%
    \setlength{\unitlength}{1875.99993896bp}%
    \ifx\svgscale\undefined%
      \relax%
    \else%
      \setlength{\unitlength}{\unitlength * \real{\svgscale}}%
    \fi%
  \else%
    \setlength{\unitlength}{\svgwidth}%
  \fi%
  \global\let\svgwidth\undefined%
  \global\let\svgscale\undefined%
  \makeatother%
  \begin{picture}(1,0.48614074)%
    \lineheight{1}%
    \setlength\tabcolsep{0pt}%
    \put(0,0){\includegraphics[width=\unitlength,page=1]{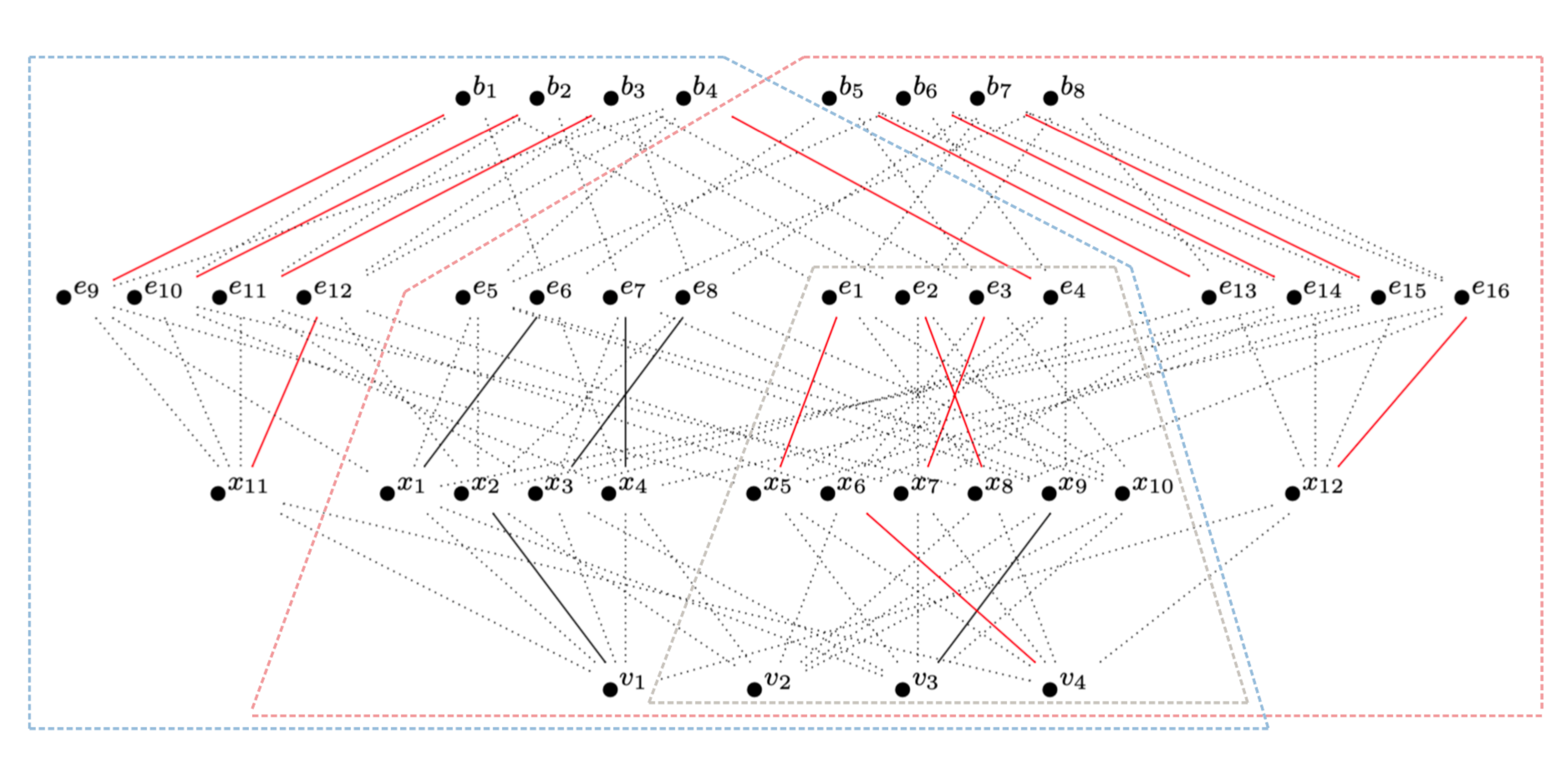}}%
    \put(0.0162293,0.4600941){\makebox(0,0)[lt]{\lineheight{1.25}\smash{\begin{tabular}[t]{l}$\mathcal X(K_1)$\end{tabular}}}}%
    \put(0.91640005,0.4600941){\makebox(0,0)[lt]{\lineheight{1.25}\smash{\begin{tabular}[t]{l}$\mathcal X(K_2)$\end{tabular}}}}%
    \put(0.51725076,0.33059046){\makebox(0,0)[lt]{\lineheight{1.25}\smash{\begin{tabular}[t]{l}$\mathcal X(P)$\end{tabular}}}}%
  \end{picture}%
\endgroup%

    \end{footnotesize}
    \caption{Acyclic matching $M$ in $\X(K)$ in solid red (and a maximal acyclic matching $M_{\max}$ in solid red and black). In dashed lines, the subposets $\X(K_1)$, $\X(K_2)$ and $\X(P)$ with $P$ the regular CW-structure of the projective plane $\RR P^2$ in the basis of $K_1$ and $K_2$.\\
    Here,  $e_9, \dots, e_{12}$ and $e_{13}, \dots, e_{16}$ denote the `internal' 2-cells in $K_1$ and $K_2$ respectively with vertices  $\{v_1, v_4, v_i\}$ for $i=2,3$. The 3-cells in $K_1$ and $K_2$ are denoted by  $b_1, \dots, b_4$ and  $b_5, \dots, b_8$ respectively. The 1-cells $x_{11}$ and $x_{12}$ joins the vertices $v_1$ and $v_4$ in $K_1$ and $K_2$ respectively. The `internal' 1-cells in the CW-structure $P$ of the projective plane in the basis of $K_1$ and $K_2$ are labelled as $x_7, \dots, x_{10}$.}
    \label{fig:matching_3dim}
\end{figure}

\begin{figure}[htb!]
    \centering
    \def\svgwidth{0.95\textwidth}
    \begin{scriptsize}
         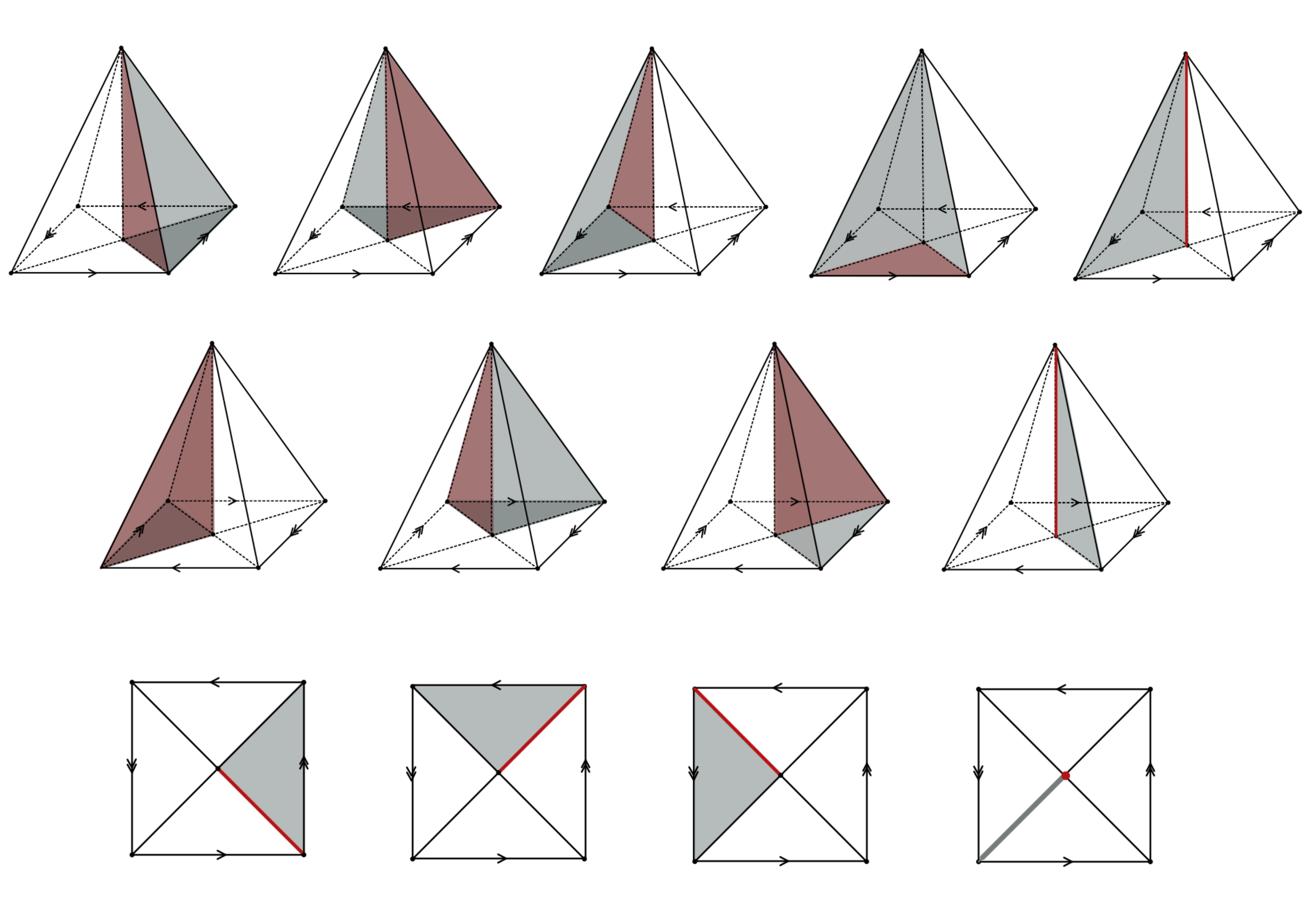
    \end{scriptsize}
    \caption{Internal collapses in $K$ determined by the acyclic matching $M$. Here $P$ is the CW-structure of projective plane $\RR P^2$ at the basis of the pyramids.}
    \label{fig:internal_collapses_dim3}
\end{figure}

\begin{figure}[htb!]
    \centering
    \def\svgwidth{0.65\textwidth}
    \begin{footnotesize}
         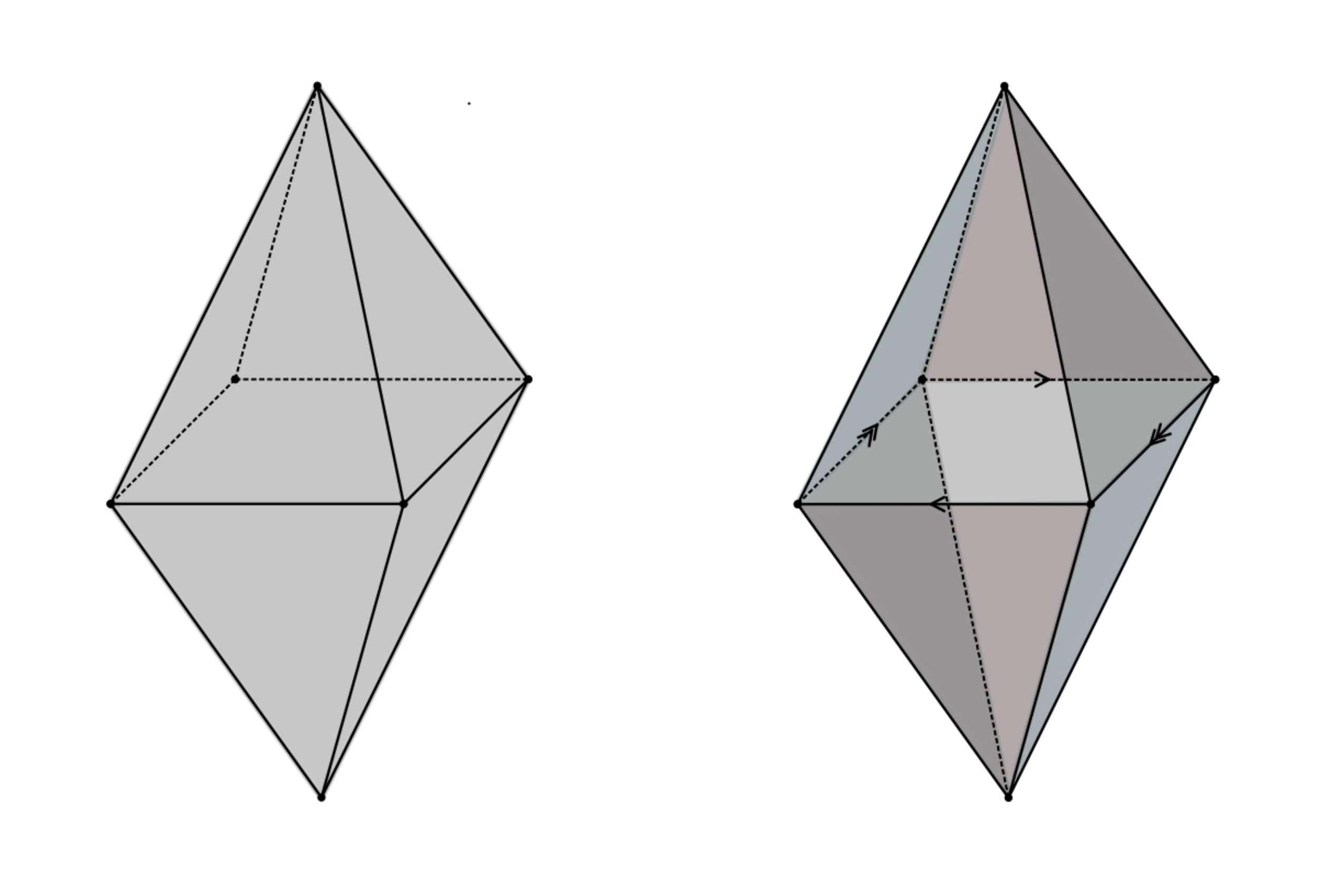
    \end{footnotesize}
    \caption{Left: A triangulation $S$ of $S^2$ (the boundary of $D^3$). Right: The Morse CW-complex $K_M$ obtained as the identification of antipodal points in $S\cup_\varphi D^3$, with $\varphi:\partial D^3\to S$ the canonical homeomorphism.
    }
    \label{fig:attaching_map_dim3}
\end{figure}

\end{example}

\section{ $Q^{**}$-transformations of group presentations} \label{section Q**-transformations}

In this section, we 
present an algorithm based in discrete Morse theory to obtain finite presentations that are $Q^{**}$-equivalent to a given one without tracking down explicitly the $Q^{**}$-transformations involved. The method has a geometric core based in the connection between group presentations and 2-dimensional CW-complexes \cite[Ch.I]{hog1993two}.

In the same way as the collapses and expansions determine simple homotopy classes  of
CW-complexes, the transformations (1)--(5) define classes of group presentations, the
\textit{$Q^{**}$-classes}. There is a correspondence between 3-deformation classes of CW-complexes of dimension 2 and $Q^{**}$-classes of group presentations. Such correspondence is based on a standard way to map a given complex $K$ to an associated presentation $\P_{K}$, and a presentation $\P$ to a 2-complex $K_\P$.
Concretely, if $\P=\langle x_1,\dots,x_n ~|~  r_1,\dots,r_m\rangle$ is a finite group presentation, 
its associated \textit{standard complex} $K_\P$ is a
CW-complex with a single vertex and an oriented 1-cell $e_i^1$ for each generator $x_i$. The 2-cells $e_j^2$ of $K_\P$ correspond to the relators $r_j$, which determine closed edge paths on the 1-skeleton as the attaching maps.
Conversely, if $K$ is a finite CW-complex of dimension 2 and $T$ is a 
spanning tree of $K^{(1)}$ (the 1-skeleton of $K$), then $K\threedef K/T$ and the \textit{standard presentation}
$\P_K$ is defined as a presentation of the fundamental group of $K/T$. Different
choices of spanning trees result in $Q^{**}$-equivalent presentations. 
It is important to note that $\P_K$ contains not only information about the fundamental group of $K$, but also about its 3-deformation class. 
Given group presentations $\P,\Q$, if
$\P$ can be transformed into  $\Q$ by a finite 
sequence of $Q^{**}$-transformations, we say that $\P$ is \textit{$Q^{**}$-equivalent} to
$\Q$ and we denote  $\P\sim_{Q^{**}} \Q$.
It can be shown that $\P\sim_{Q^{**}} \Q$
if and only
$K_{\P}\threedef K_{\Q}$. Similarly, if $K, L$ are CW-complexes of dimension 2,
$K\threedef L$ if and only if $\P_K \sim_{Q^{**}} \P_L$. Moreover, $\P\sim_{Q^{**}} \P_{K_{\P}}$ and $K\threedef K_{\P_{K}}$.
Balanced presentations of the trivial group correspond to contractible complexes.

Theorem \ref{teo morse deformacion} implies the following result in terms of group presentations when the dimension of the CW-complex is 2.

\begin{corollary}\label{morse AC}
Let $K$ be a  regular  CW-complex of dimension 2 and let $M$ be 
 an acyclic matching in $K$. 
Then, $\P_K\sim_{Q^{**}}\P_{K_M}$.
\end{corollary}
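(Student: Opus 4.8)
The plan is to deduce this directly from the deformation statement of Theorem \ref{teo morse deformacion}, combined with the dictionary between $3$-deformations of $2$-complexes and $Q^{**}$-equivalence of their standard presentations recalled at the beginning of this section.

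First I would apply Theorem \ref{teo morse deformacion} to the regular complex $K$ with the acyclic matching $M$. Since $\dim(K)=2$, the general bound $K\nplusonedef K_M$ furnished by that theorem specializes to a $3$-deformation $K\threedef K_M$. A point worth recording explicitly is that $K_M$ is again a $2$-complex: by the final assertion of Theorem \ref{teo morse deformacion}, $K_M$ has exactly one cell of dimension $k$ for every unmatched cell of dimension $k$ in $M$, and since every cell of $K$ has dimension at most $2$, the same holds for the cells of $K_M$. Consequently the standard presentation $\P_{K_M}$ is well defined.

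Next I would invoke the correspondence recalled above: for $2$-complexes $K$ and $L$, one has $K\threedef L$ if and only if $\P_K\sim_{Q^{**}}\P_L$. Applying the forward implication to the $3$-deformation $K\threedef K_M$ obtained in the previous step yields $\P_K\sim_{Q^{**}}\P_{K_M}$, which is exactly the claim.

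Since both ingredients are already established, there is no substantial difficulty here; the corollary is essentially a specialization of Theorem \ref{teo morse deformacion} to dimension $2$ translated through the $2$-complex-to-presentation dictionary. The only point that genuinely requires attention is the verification that $K_M$ remains two-dimensional, so that the dictionary applies and $\P_{K_M}$ is meaningful, and this follows at once from the cell count in Theorem \ref{teo morse deformacion}.
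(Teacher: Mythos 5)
Your argument is exactly the one the paper intends: the corollary is stated as an immediate consequence of Theorem \ref{teo morse deformacion} (which for $\dim K = 2$ gives $K \threedef K_M$) combined with the dictionary $K\threedef L \iff \P_K\sim_{Q^{**}}\P_L$ recalled at the start of the section. Your added check that $K_M$ is again $2$-dimensional is a sensible, if minor, point of care that the paper leaves implicit.
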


We will show next an algorithm to construct $\P_{K_M}$ for any matching $M$ in $\X(K)$. 
We start with the case when the matching $M$ only pairs cells of dimension 0 and 1.

\begin{lemma} \label{matching tree}
 Let $K$ be a regular  CW-complex  of dimension $n$ and let $M$ be a matching in the subposet of $\X(K)$ of cells of 
 dimension 0 and 1 with only one critical cell of dimension 0. Let $T$ be the subcomplex of $K$ of the closed matched cells
 $T= \bigcup_{e \in M} \bar e$.  Then,
$M$ is acyclic if and only if $T$ is a spanning 
tree in the 1-skeleton $K^{(1)}$.
\end{lemma}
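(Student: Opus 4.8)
The plan is to translate the combinatorial condition on the matching into a purely graph-theoretic statement about the subgraph $T$, and then close the argument by an edge-count. First I would record the bookkeeping: since $M$ is a matching of the $0$- and $1$-cells in which exactly one $0$-cell is critical, every vertex except one is paired with an incident $1$-cell, and distinct vertices are paired with distinct edges. Hence the matched vertices and the matched edges are in bijection, and if $V$ denotes the number of $0$-cells of $K$ then $T$ consists of exactly $V-1$ edges together with their endpoints. Since $T$ is a subgraph of $K^{(1)}$ on at most $V$ vertices and already carries the correct number of edges, being a spanning tree is equivalent to being acyclic as a graph. So it suffices to prove that $M$ is acyclic as a Morse matching if and only if $T$ contains no graph-theoretic cycle.

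The heart of the proof is a dictionary between directed cycles in the reversed Hasse diagram $\H_M(\X(K))$ and cycles of the graph $T$. In one direction, a simple directed cycle in $\H_M(\X(K))$ is necessarily of the form $v_1 \to x_1 \to v_2 \to x_2 \to \cdots \to v_\ell \to x_\ell \to v_1$, alternating an unmatched upward step $v_i \prec x_i$ with a matched (reversed) step in which $x_i$ is paired with $v_{i+1}$; reading off the edges $x_i$, all of which are matched and hence lie in $T$, yields a closed trail with distinct edges in $T$, and therefore a cycle. The converse is the step I expect to be the main obstacle, and it rests on what one may call a rotation lemma: given a graph cycle $w_1,x_1,w_2,\dots,x_k,w_1$ inside $T$, each edge $x_i$ is matched to one of its two endpoints, so the assignment $x_i \mapsto v(x_i)$ is an injection from the $k$ edges of the cycle into its $k$ vertices, hence a bijection; a short induction then forces it to pick endpoints consistently around the cycle, i.e.\ either $v(x_i)=w_{i+1}$ for all $i$ or $v(x_i)=w_i$ for all $i$. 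Either consistent choice assembles the reversed matched steps together with the unmatched upward steps into a directed cycle of $\H_M(\X(K))$. Granting this dictionary, $M$ is acyclic exactly when $T$ is a forest.

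It remains to upgrade \emph{forest} to \emph{spanning tree} via the edge-count. If $M$ is acyclic, then $T$ is a forest with $V-1$ edges; since a forest satisfies (number of vertices) $=$ (number of edges) $+$ (number of components), the number of vertices of $T$ equals $(V-1)+c$, where $c$ is the number of components. As $T$ has at most $V$ vertices, this gives $c\le 1$, forcing $c=1$ and exactly $V$ vertices. Thus $T$ meets every vertex of $K$ and is connected, i.e.\ a spanning tree of $K^{(1)}$; in particular the critical $0$-cell, which a priori need not be an endpoint of any matched edge, is forced into $T$. Conversely, a spanning tree is in particular acyclic as a graph, so by the dictionary $\H_M(\X(K))$ has no directed cycle and $M$ is acyclic. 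As an alternative route to the forward implication, one could instead invoke Theorem \ref{teo morse}: restricting $M$ to the subcomplex $T$ leaves a single critical cell, namely the critical $0$-cell, so $T\co \ast$, and a graph that collapses to a point is a tree.
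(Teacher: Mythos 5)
Your proof is correct and follows essentially the same route as the paper's: the key step in both is the correspondence between directed cycles in $\H_M(\X(K))$ (which necessarily live among the cells of dimension $0$ and $1$) and graph cycles in the subcomplex of matched cells. You simply make explicit two points the paper leaves to the reader, namely the consistency ("rotation") argument showing that a graph cycle in $T$ lifts to a directed cycle in $\H_M(\X(K))$, and the edge count $|E(T)|=V-1$ that upgrades "acyclic" to "spanning tree" and forces the critical $0$-cell into $T$.
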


\begin{proof} Recall that given an acyclic matching $M$ in the Hasse diagram of $\X(K)$, the graph $\H_M(\X(K))$ is the 
(acyclic) directed graph obtained by reversing  in $\H(X)$ the orientation of the edges corresponding to pairs in $M$.
Since the induced subgraph of $\H(\X(K))$ whose vertices are the cells of dimension 0  and 1
 is the barycentric subdivision of the 1-skeleton of $K$, the result follows directly from the following fact:
 any simple cycle \[v_1\prec x_1\succ v_2\prec x_2\succ\dots
v_k\prec x_k \succ v_1\] in $\H_M(\X(K))$, with $v_i$ of dimension 0 and $x_i$ of dimension 1,
is in correspondence with a simple cycle \[v_1,v_2,\dots,v_k,v_1\] in the 1-dimensional 
subcomplex of $K$ of matched pairs of cells of dimensions 0 and 1. 
\end{proof}

As a consequence of the previous lemma and Theorem \ref{teo morse deformacion}, we obtain an alternative proof of the fact that an $n$-dimensional CW-complex $K$ $(n+1)$-deforms to $K/T$, where $T$ is a spanning tree of $K^{(1)}$. Indeed, it reduces to noting that $K/T$ is homeomorphic to $K_M$, where $M$ is the acyclic matching induced by $T$. 

We  now provide a simple description of the group presentation associated to $K_M$ for any acyclic matching $M$,
which will be easily tractable through computer assistance.
We need first some definitions.

\begin{definition} [Rewriting rule]
Let $\P$ be a group presentation and let $r$ be a relator of $\P$ given by the word 
$w_1x^{\epsilon}w_2$, where $w_1$ and $w_2$ are words on the generators, the generator $x$ appears neither in
$w_1$ nor in $w_2$ and $\epsilon=+1$ or $-1$. The \textit{equivalent expression of $x$ induced by $r$} 
is defined as $(w_1^{-1}w_2^{-1})^{\epsilon}$.
\end{definition}

\begin{remark} \label{isolate}
If $\P$ is a group presentation and 
$x$ is a generator of $\P$ such that it appears only once in a relator $r$, then $\P$ is $Q^{**}$-equivalent to the presentation obtained after eliminating the generator $x$ and  the relator $r$ and applying the rewriting rule to every occurrence of $x$ in the rest of the relators. Indeed, if $r'$ is another relator containing $x$, by cyclically permuting $r'$ if necessary (this is operation (3)), we can assume $r'$ reads as $x^{\epsilon}u$, with $\epsilon = 1$ or $-1$. We replace $r'$ by the product $sr'$ where $s$ is a suitable cyclic permutation of $r$ (or its inverse) to eliminate this occurrence of $x$ (here we apply operations (1), (2) and (3)). We iterate this procedure until no occurrence of $x$ is left.
\end{remark}
\begin{example}\label{case n=1}
If $\P=\langle x,y~ |~ xyxy^{-1}x^{-1}y^{-1}, x y^{-2}\rangle$, then the equivalent expression
of $x$ induced by the relator $r=xy^{-2}$ is $y^2$.
By Remark \ref{isolate}, $\P$ is $Q^{**}$-equivalent to the presentation
$\tilde \P=\langle y ~ | ~ 
y^2yy^2y^{-1}y^{-2}y^{-1}\rangle$, i.e. $\langle y ~ | ~ 
y\rangle$.
\end{example}

\begin{definition}\label{pres matching}
 Let $K$ be a regular CW-complex of dimension 2.
 Let $M$ be an acyclic matching in $\X(K)$ such that 
 there is only one critical cell of dimension 0. 
   Denote by $M_0$  the subset of matched pairs of cells of dimension 0 and 1 and by $M_1=\{(x_1, e_1), \dots, (x_m, e_m)\}$  the subset of matched pairs of cells of dimension 1 and 2. 
 The \textit{Morse presentation} $\Q_{K,M}$ is the presentation $\Q_m$ defined by the following iterative procedure:
\begin{itemize}
 \item  $\Q_0$ is the standard presentation $\P_K$ constructed using 
 the spanning tree $T$  induced by $M_0$ (see Lemma \ref{matching tree}).
 The generators of $Q_0$ are the unmatched 1-cells of $K$ with respect to the matching $M_0$, and its relators are the words induced by the attaching maps of the 2-cells of $K$. We associate to each relator $r$ in $Q_0$ the 2-cell that produced it.
 \item For $0\leq i <m$, let $\Q_{i+1}$ be the presentation obtained from $\Q_{i}$ after removing the relator associated to $e_i$ and the generator $x_i$, and applying the rewriting rule on every occurrence of the generator $x_i$ in the rest of the relators.
\end{itemize}
\end{definition}

We will see next that the Morse presentation $\Q_{K, M}$ 
is an algorithmic description of $\P_{K_M}$. Concretely, there is a correspondence between the internal collapses in $K$ induced by $M$ and the combinatorial transformations in $\P_K$ described in Definition \ref{pres matching}. The acyclicity of $M$ implies that  the construction of $\Q_{K, M}$ does not depend on the choice of the order in $M_1$; that is, the internal collapses determined by $M_1$ can be performed in any order (see Remark \ref{order internal collapses}).

\begin{proposition}\label{presentation K_M}
 Let $K$ be a regular 2-dimensional CW-complex, and let $M$ be  an acyclic matching in $\X(K)$
 with only one critical cell of dimension 0. Then, $\Q_{K, M} = \P_{K_M}$ 
 for a suitable choice of orientations and basepoints in $K_M$.
\end{proposition}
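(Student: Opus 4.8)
The plan is to compute $\P_{K_M}$ directly from the combinatorial structure that the internal collapses impose on $K_M$, and to match this cell by cell with the iterative construction of $\Q_{K,M}$. First I would use Remark~\ref{order internal collapses} to fix a convenient order for the internal collapses induced by $M$: perform all the collapses of $M_0$ first, and then those of $M_1=\{(x_1,e_1),\dots,(x_m,e_m)\}$ in the given indexing. By Lemma~\ref{matching tree} the collapses of $M_0$ amount to collapsing the spanning tree $T$, so after this first stage the complex is $K/T$: its unique $0$-cell is the critical one, its $1$-cells are the $1$-cells of $K$ not in $T$, and by the first rule of Remark~\ref{combinatorial} the combinatorial attaching map of each $2$-cell is obtained from its attaching map in $K$ by deleting the tree edges. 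This is exactly the data of $\Q_0=\P_K$ as in Definition~\ref{pres matching}.

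Next I would argue by induction on the number of $M_1$-collapses already performed that, after collapsing $(x_1,e_1),\dots,(x_i,e_i)$, the combinatorial reading of the current complex coincides with $\Q_i$. The inductive step rests on the identification of the two local rules. On the topological side, collapsing the pair $(x_{i+1},e_{i+1})$, whose current attaching map may be taken (after a cyclic permutation) to read $x_{i+1}\,u$ with $u$ not involving $x_{i+1}$, replaces by the second rule of Remark~\ref{combinatorial} every occurrence of $x_{i+1}^{\pm1}$ in the remaining $2$-cells by $u^{\mp1}$ and removes $x_{i+1}$ and $e_{i+1}$. On the algebraic side, the equivalent expression of $x_{i+1}$ induced by the relator $x_{i+1}\,u$ is $u^{-1}$, so the rewriting rule substitutes $x_{i+1}^{\pm1}$ by $u^{\mp1}$ and deletes the generator $x_{i+1}$ together with the relator of $e_{i+1}$---precisely the passage from $\Q_i$ to $\Q_{i+1}$. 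Here I would invoke the acyclicity of $M$ (as in Remark~\ref{order internal collapses}) to guarantee that at every stage the generator $x_{i+1}$ still occurs exactly once in the relator attached to $e_{i+1}$, so that the rewriting rule is applicable and matches the collapse.

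Running the induction to $i=m$, the combinatorial reading of the final complex is $\Q_m=\Q_{K,M}$. By Theorem~\ref{teo morse deformacion} this final complex is $K_M$, which has a single $0$-cell, one $1$-cell for each critical $1$-cell and one $2$-cell for each critical $2$-cell. Since $K_M$ has only one $0$-cell, any spanning tree of $K_M^{(1)}$ is trivial, so $\P_{K_M}$ is read off directly from the cell structure: one generator per $1$-cell and one relator per $2$-cell, given by its combinatorial attaching map. This is exactly the combinatorial reading computed above, whence $\Q_{K,M}=\P_{K_M}$.

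I expect the main obstacle to be the bookkeeping needed to make the two local rules agree literally as words, rather than merely up to the ambiguities inherent in attaching maps. Concretely, a $2$-cell attaching map is only well defined up to a choice of starting point (a cyclic permutation) and orientation, and its reading as a relator also depends on the orientations chosen for the $1$-cells; the internal collapse inherits a specific starting point and orientation for each modified cell from the original structure, which need not be the one used when defining $\P_{K_M}$ abstractly. This is precisely the reason for the clause \emph{``for a suitable choice of orientations and basepoints in $K_M$''}: the orientations of the critical $1$-cells and the starting edges of the critical $2$-cells in $K_M$ must be chosen to agree with those inherited from the collapses, after which the two words coincide on the nose.
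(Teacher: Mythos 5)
Your proof is correct and follows essentially the same route as the paper's: both reduce the statement to identifying the rewriting rule of Definition~\ref{pres matching} with the combinatorial description of an elementary internal collapse in Remark~\ref{combinatorial}, and both use Remark~\ref{order internal collapses} together with Lemma~\ref{matching tree} to organize the deformation into the spanning-tree stage and the $M_1$ stage. The only cosmetic difference is that you collapse the tree first while the paper performs the $M_1$ collapses first and writes $K_M=K_{M_1}/T$; your write-up is, if anything, slightly more explicit about the induction and about why acyclicity guarantees that $x_{i+1}$ still occurs exactly once in the relator associated to $e_{i+1}$ at each stage.
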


\begin{proof}  Given an acyclic matching $M$ in $\X(K)$, by Lemma  \ref{matching - internal collapses} and Remarks \ref{decreasing dimension} and \ref{order internal collapses} there is a sequence of internal collapses \eqref{internal collapses sequence} of decreasing dimension such that the collapses at each dimension can be performed in any order. 
If $M_0$ and $M_1 = \{(x_1, e_1), \dots, (x_m, e_m)\}$ are as in Definition \ref{pres matching}, then they describe an (ordered) sequence of internal collapses to transform $K$ into $K_M$. For each $1\leq i <m$, the combinatorial transformation performed to get $\Q_{i+1}$ from $\Q_i$ parallels exactly the geometric description of the elementary internal collapse indicated by the pair $(x_i, e_i)$ (see Remark \ref{combinatorial}).
  Denote by $K_{M_1}$ the CW-complex obtained from $K$ after performing the
  internal collapses induced by the pairs in $M_1$. Now, let $T$ be the spanning tree induced by $M_0$. Since $K_M = K_{M_1} / T$, it
  follows that $\Q_m = \Q_{K, M}$ is the standard presentation associated to
  $K_M$ for the right choice of orientations and basepoints.
\end{proof}

By Corollary \ref{morse AC}, 
Proposition \ref{presentation K_M} implies that $Q_{K,M}$ is another
representative of the
$Q^{**}$-class of $\P_K$.

\begin{theorem} \label{matching pres} Let $K$ be a regular 2-dimensional CW-complex, and let $M$ be 
 an acyclic matching
  in $\X(K)$ with only one critical cell of dimension 0. Then, 
$\Q_{K,M}\sim_{Q^{**}} \P_{K}$.
\end{theorem}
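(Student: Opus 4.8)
The plan is to derive the statement as a direct synthesis of the two immediately preceding results, since the substantive work has already been done. First I would invoke Proposition \ref{presentation K_M}, which identifies the Morse presentation $\Q_{K,M}$ with the standard presentation $\P_{K_M}$ of the Morse complex $K_M$, for a suitable choice of spanning tree, orientations, and basepoints in $K_M$. The hypothesis that $M$ has a single critical cell of dimension $0$ is exactly what guarantees (via Lemma \ref{matching tree} and Definition \ref{pres matching}) that $\Q_{K,M}$ is a genuine group presentation and that Proposition \ref{presentation K_M} applies. This step reduces the theorem to comparing $\P_{K_M}$ with $\P_K$.

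Next I would appeal to Corollary \ref{morse AC}, which asserts precisely that $\P_K \sim_{Q^{**}} \P_{K_M}$. That corollary rests in turn on Theorem \ref{teo morse deformacion}: since $K$ has dimension $2$, the $(n+1)$-deformation $K \nplusonedef K_M$ specializes to a $3$-deformation $K \threedef K_M$, and the correspondence between $3$-deformations of $2$-complexes and $Q^{**}$-equivalences of standard presentations then yields $\P_K \sim_{Q^{**}} \P_{K_M}$. Chaining the two facts gives $\Q_{K,M} = \P_{K_M} \sim_{Q^{**}} \P_K$, which is the assertion.

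The one point requiring care is that Proposition \ref{presentation K_M} produces the equality $\Q_{K,M} = \P_{K_M}$ only for one particular choice of auxiliary data on $K_M$, whereas Corollary \ref{morse AC} may a priori refer to a standard presentation of $K_M$ built from different choices. I would dispose of this by recalling the fact noted in the preamble to this section, namely that different choices of spanning tree (and of orientations and basepoints) yield $Q^{**}$-equivalent standard presentations, so that the $Q^{**}$-class of $\P_{K_M}$ is independent of these choices. Hence the specific representative singled out by Proposition \ref{presentation K_M} lies in the same $Q^{**}$-class as the one implicitly used in Corollary \ref{morse AC}, and the two relations splice together without ambiguity.

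Because every ingredient is already established, I do not anticipate a genuine obstacle; the proof is essentially a one-line combination. The only thing to verify is that the auxiliary choices on $K_M$ are reconciled through the $Q^{**}$-invariance of the standard presentation, which is routine. Accordingly, the entire argument amounts to writing \emph{``It follows from Corollary \ref{morse AC} and Proposition \ref{presentation K_M},''} with the remark on choices supplying the only connective tissue.
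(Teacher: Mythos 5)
Your proposal is correct and coincides with the paper's own argument: the theorem is obtained there precisely by combining Corollary \ref{morse AC} ($\P_K \sim_{Q^{**}} \P_{K_M}$) with Proposition \ref{presentation K_M} ($\Q_{K,M} = \P_{K_M}$ for suitable choices). Your extra remark reconciling the auxiliary choices via the $Q^{**}$-invariance of the standard presentation is a reasonable (and harmless) elaboration of what the paper leaves implicit.
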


We deduce next a
combinatorial
technique to study the $Q^{**}$-class of a given presentation $\P$, by means of acyclic matchings.

\begin{theorem} \label{teo morse presentation} Let $\P$ be a finite presentation of a finitely presented group and  $K'_{\P}$ the barycentric subdivision of $K_{\P}$.
Let $M$ be an acyclic matching in $\X(K'_{\P})$ with only one critical cell of dimension 0.
Then $\P\sim_{Q^{**}}\Q_{K'_{\P},M}$.
\end{theorem}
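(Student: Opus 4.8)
The plan is to prove the statement by chaining three $Q^{**}$-equivalences, using the passage to the regular complex $K'_{\P}$ as the bridge between the standard complex $K_{\P}$ (which is in general \emph{not} regular, so that discrete Morse theory does not apply to it directly) and the hypotheses of Theorem \ref{matching pres}. First I would invoke the dictionary between balanced presentations and $2$-complexes recalled at the beginning of this section: every presentation is $Q^{**}$-equivalent to the standard presentation of its own standard complex, so $\P\sim_{Q^{**}}\P_{K_{\P}}$.

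The crux is the second equivalence, relating $\P_{K_{\P}}$ to $\P_{K'_{\P}}$. Here one uses that $K'_{\P}$ is a subdivision of $K_{\P}$, and that a subdivision of a $2$-complex can be realized by an \emph{explicit} formal deformation whose intermediate complexes have dimension at most $3$ --- a classical, constructive fact in simple homotopy theory (see \cite{MR0362320, hog1993two}). It is essential to note that this does \emph{not} rely on the general (open) principle that simple homotopy equivalent $2$-complexes are $3$-deformation equivalent, which is itself essentially the generalized Andrews--Curtis conjecture; rather, it uses the specific, dimension-bounded collapses and expansions that witness a subdivision. Consequently $K_{\P}\threedef K'_{\P}$, and by the equivalence $K\threedef L \iff \P_K\sim_{Q^{**}}\P_L$ for $2$-complexes one obtains $\P_{K_{\P}}\sim_{Q^{**}}\P_{K'_{\P}}$.

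Since $K'_{\P}$ is a regular $2$-complex and $M$ is an acyclic matching in $\X(K'_{\P})$ with a single critical $0$-cell, Theorem \ref{matching pres} applies verbatim and yields $\Q_{K'_{\P},M}\sim_{Q^{**}}\P_{K'_{\P}}$. Concatenating the three equivalences and using that $\sim_{Q^{**}}$ is an equivalence relation gives
$$\P\sim_{Q^{**}}\P_{K_{\P}}\sim_{Q^{**}}\P_{K'_{\P}}\sim_{Q^{**}}\Q_{K'_{\P},M},$$
whence $\P\sim_{Q^{**}}\Q_{K'_{\P},M}$, as required. The main obstacle is precisely the middle step: one must justify that replacing $K_{\P}$ by its barycentric subdivision preserves the $Q^{**}$-class, i.e.\ that subdivision is realizable by a $3$-deformation, taking care to distinguish this elementary fact from the unsettled general relationship between simple homotopy equivalence and $3$-deformation for $2$-complexes. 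The remaining steps are direct applications of the presentation/complex dictionary and of Theorem \ref{matching pres}.
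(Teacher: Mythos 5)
Your proposal is correct and follows essentially the same route as the paper: the chain $\P\sim_{Q^{**}}\P_{K_{\P}}\sim_{Q^{**}}\P_{K'_{\P}}\sim_{Q^{**}}\Q_{K'_{\P},M}$, with the middle step justified by the fact that a $2$-complex $3$-deforms to its barycentric subdivision and the last step by Theorem \ref{matching pres}. Your added caveat distinguishing this explicit subdivision deformation from the generalized Andrews--Curtis conjecture is a sound clarification but does not alter the argument.
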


\begin{proof} 
The result follows from the sequence of $Q^{**}$-equivalences
\[\P\sim_{Q^{**}}\P_{K_{\P}}\sim_{Q^{**}}\P_{K_{\P}'}
\sim_{Q^{**}}\P_{(K_{\P}')_M}=\Q_{K_{\P}',M},\]
where the equivalence $\P\sim_{Q^{**}}\P_{K_{\P}}$ is well known (see for instance \cite{MR0380813}), $\P_{K_{\P}}\sim_{Q^{**}}\P_{K_{\P}'}$ holds since every 2-complex 3-deforms to its barycentric subdivision, and  $\P_{K_{\P}'}\sim_{Q^{**}}\P_{(K_{\P}')_M}=\Q_{K_{\P}',M}$ is consequence of Theorem \ref{matching pres} applied to the regular 2-dimensional CW-complex $K'_{\P}$.
\end{proof}

\begin{example}
Let $\P=\langle x,y ~|~x^2,  xy^{-2}\rangle$. The poset $\X(K'_{\P})$ associated to $\P$ is depicted in Figure \ref{fig:face poset}. This poset admits an explicit algorithmic description from $\P$. If $o$ is the minimal point that represents the only vertex of $K_\P$, for every generator $g$ of $\P$ there is a subposet of $\X(K_\P')$, a model of $S^1$, consisting of four points $o, g, g^{+1}, g^{-1}$.  We refer to these points  as \textit{generator elements}.
For every relator $r_i$ of $\P$, there is a subposet of $\X(K_\P')$, a model of $D^2$, given by a cone with apex $v_r$ over a model of $S^1$ that represents the subdivision of the 2-cell associated to $r_i$ in $K_\P$. We refer to the points of this subposet as \textit{relator elements}.
Finally, there are cover relations between the generator elements and the relator elements. For each $g^{\epsilon}$ in $r_i$ there is an edge from the generator elements $g$, $g^{+1}$, $g^{-1}$ to the homonym relator elements  $g$, $g^{+1}$, $g^{-1}$.
Note that $\X(K'_{\P})$ has 29 points and, in general, $|\X(K'_{\P})|=4(l+m)+1$ where $l$ is the total relator length and $m$ the number of relators of $\P$.

\begin{figure}[htb!]
\begin{minipage}{0.55\textwidth}
    \begin{scriptsize}
     \def\svgwidth{\textwidth}
    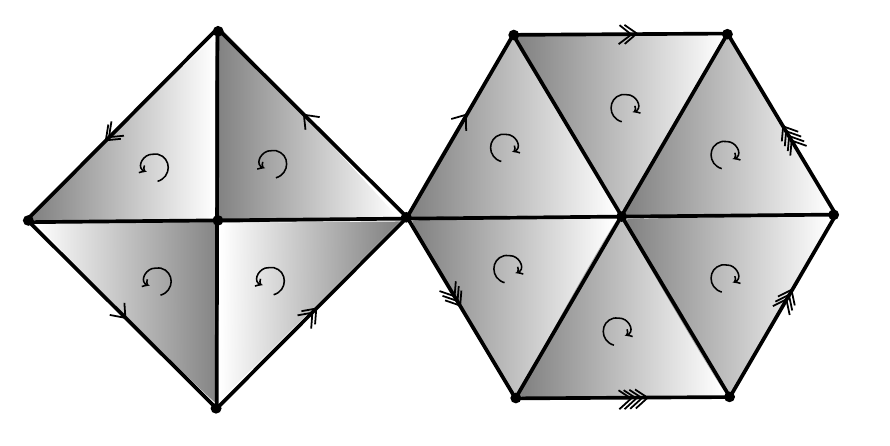
    \end{scriptsize}
\end{minipage}
\centering
\xymatrix@C=0.0001em@R=3em{
\bullet_{x^{+1}}&\bullet_{x^{-1}}&\bullet_{x^{+1}}&\bullet_{x^{-1}}&&&&&&
\bullet_{x^{+1}}&\bullet_{x^{-1}}&\bullet_{y^{-1}}&\bullet_{y^{+1}}&\bullet_{y^{-1}}&\bullet_{y^{+1}}\\
\bullet_o\ar@{-}[u]\ar@{-}[urrr]&
\bullet_{x}\ar@{-}[u]\ar@{-}[ul]&
\bullet_o\ar@{-}[u]\ar@{-}[ul]&
\bullet_{x}\ar@{-}[u]\ar@{-}[ul]&
\bullet_{x^{+1}}\ar@{-}[ull]\ar@{-}[ullll]\ar@{-}[urrrrr]&
\bullet_{x^{-1}}\ar@{-}[ull]\ar@{-}[ullll]\ar@{-}[urrrrr]&
&\bullet_{y^{-1}}\ar@{-}[urrrr]\ar@{-}[urrrrrr]
&\bullet_{y^{+1}}\ar@{-}[urrrr]\ar@{-}[urrrrrr]&
\bullet_o\ar@{-}[u]\ar@{-}[urrrrr]&
\bullet_{x}\ar@{-}[u]\ar@{-}[ul]&
\bullet_o\ar@{-}[u]\ar@{-}[ul]&
\bullet_{y}\ar@{-}[u]\ar@{-}[ul]&
\bullet_o\ar@{-}[u]\ar@{-}[ul]&
\bullet_{y}\ar@{-}[u]\ar@{-}[ul]\\
&\bullet_{v_{r_1}}\ar@{-}[u]\ar@{-}[ul]\ar@{-}[urr]\ar@{-}[ur]& &&&
\bullet_{x}\ar@{-}[ull]\ar@{-}[ullll]
\ar@{-}[urrrrr]\ar@{-}[u]\ar@{-}[ul]&
\bullet_o\ar@{-}[ullll]
\ar@{-}[ullllll]
\ar@{-}[urrr]
\ar@{-}[urrrrr]
\ar@{-}[urrrrrrr]
\ar@{-}[ull]\ar@{-}[ul]\ar@{-}[ur]\ar@{-}[urr]&
\bullet_{y} \ar@{-}[urrrrr]\ar@{-}[urrrrrrr]\ar@{-}[u]\ar@{-}[ur]
&&&&\bullet_{v_{r_2}}\ar@{-}[u]\ar@{-}[ul]\ar@{-}[ur]\ar@{-}[ull]\ar@{-}[urr]\ar@{-}[urrr]
}
    \caption{Top: The barycenter subdivision of $K_\P$. Bottom: The face poset $\X(K'_\P)$.}
    \label{fig:face poset}
\end{figure}

Consider an acyclic matching $M$ in $\X(K'_\P)$ with a single critical cell of dimension 0, Figure \ref{fig:matching}.

\begin{figure}[htb!]
    \begin{minipage}{0.52\textwidth}
    \begin{scriptsize}
     \def\svgwidth{\textwidth}
    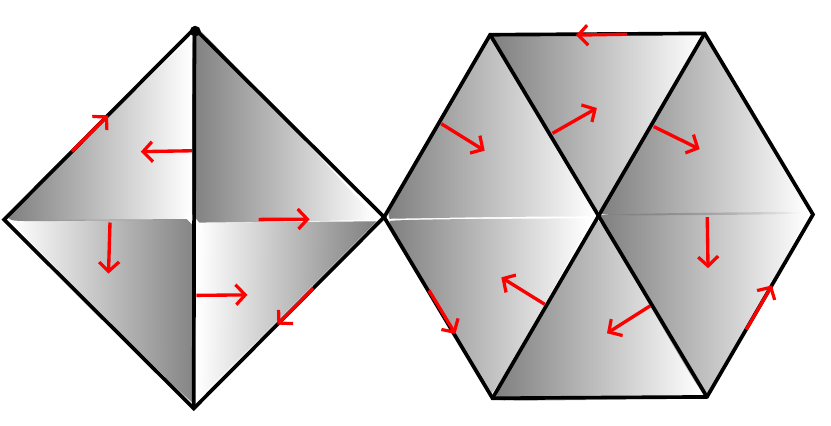
    \end{scriptsize}
\end{minipage}
\centering
\xymatrix@C=0.6em @R=3em{
\circ_{16}&\bullet_{15}&\bullet_{14}&\bullet_{13}&&&&&&
\bullet_{12}&\bullet_{11}&\bullet_{10}&\bullet_{9}&\bullet_{8}&\bullet_{7}\\
\bullet_4\ar@{-}[u]\ar@{-}[urrr]&
\bullet_{15}\ar@{-}@*{[red]}[u]\ar@{-}[ul]&
\bullet_{14}\ar@{-}@*{[red]}[u]\ar@{-}[ul]&
\bullet_{13}\ar@{-}@*{[red]}[u]\ar@{-}[ul]&
\bullet_{12}\ar@{-}[ull]\ar@{-}[ullll]\ar@{-}@*{[red]}[urrrrr]&
\bullet_{3}\ar@{-}[ull]\ar@{-}[ullll]\ar@{-}[urrrrr]&
&\circ_{6}\ar@{-}[urrrr]\ar@{-}[urrrrrr]
&\bullet_{2}\ar@{-}[urrrr]\ar@{-}[urrrrrr]&
\bullet_5\ar@{-}[u]\ar@{-}[urrrrr]&
\bullet_{11}\ar@{-}@*{[red]}[u]\ar@{-}[ul]&
\bullet_{10}\ar@{-}@*{[red]}[u]\ar@{-}[ul]&
\bullet_{9}\ar@{-}@*{[red]}[u]\ar@{-}[ul]&
\bullet_8\ar@{-}@*{[red]}[u]\ar@{-}[ul]&
\bullet_{7}\ar@{-}@*{[red]}[u]\ar@{-}[ul]\\
&\bullet_{4}\ar@{-}[u]\ar@{-}@*{[red]}[ul]\ar@{-}[urr]\ar@{-}[ur]& &&&
\bullet_{3}\ar@{-}[ull]\ar@{-}[ullll]
\ar@{-}[urrrrr]\ar@{-}@*{[red]}[u]\ar@{-}[ul]&
\circ_1\ar@{-}[ullll]
\ar@{-}[ullllll]
\ar@{-}[urrr]
\ar@{-}[urrrrr]
\ar@{-}[urrrrrrr]
\ar@{-}[ull]\ar@{-}[ul]\ar@{-}[ur]\ar@{-}[urr]&
\bullet_{2} \ar@{-}[urrrrr]\ar@{-}[urrrrrrr]\ar@{-}[u]\ar@{-}@*{[red]}[ur]
&&&&\bullet_{5}\ar@{-}[u]\ar@{-}[ul]\ar@{-}[ur]\ar@{-}@*{[red]}[ull]\ar@{-}[urr]\ar@{-}[urrr]
}
     \caption{Top: The barycenter subdivision of $K_\P$ with arrows and labels associated to an acyclic matching $M$. Bottom: The face poset $\X(K'_\P)$ with the matching $M$ in red. The labels correspond to an induced discrete Morse function $f_M$. Critical cells are empty bullets.}
    \label{fig:matching}
\end{figure}
First, we compute the presentation $\Q_0$ associated to $K'_\P/T$, $T$ being the spanning tree in ${K'_{\P}}^{(1)}$ induced by $M$, see \eqref{Q0} \footnote{Here, the subscript of each critical 1-cell corresponds to the label in Figure \ref{fig:matching}.}. Notice that, although $\Q_0$ is $Q{^{**}}$-equivalent to $\P$, it has a larger  number of generators and relators.
\begin{footnotesize}
\begin{equation} \label{Q0}
\begin{split}\Q_{0} 
= &\langle x_{6}\dots, \mathbf{x_{15}}~|~\\
&x_7, ~x_6^{-1}x_7^{-1}x_8, ~x_8^{-1}x_9, ~x_6^{-1}x_9^{-1}x_{10}, ~x_{10}^{-1}x_{11}, ~x_{11}x_{12}^{-1}, ~x_{13}, ~x_{12}x_{13}^{-1}x_{14}, ~\mathbf{x_{14}^{-1}x_{15}}, ~x_{12}x_{15}^{-1}\rangle
\end{split}\tag{$\ast_3$}
\end{equation}
\end{footnotesize}
We perform below the iterative procedure of reduction of $\Q_0$ described in Definition \ref{pres matching}.  This is a sequence of  $Q^{**}$-transformations induced by $M$ to obtain $\Q_{K,M}$, a new presentation derived from $\Q_0$ with fewer generators and relators that is also $Q^{**}$-equivalent to $\P$ (see Proposition \ref{presentation K_M}). At each step, the generator and relator involved in the transformation of the next step is in bold.
\begin{footnotesize}
\begin{align*}\Q_{1}
= &\langle x_{6}, \dots, \mathbf{x_{14}}~|~
x_7, ~x_6^{-1}x_7^{-1}x_8, ~x_8^{-1}x_9, ~x_6^{-1}x_9^{-1}x_{10}, ~x_{10}^{-1}x_{11}, ~x_{11}x_{12}^{-1}, ~x_{13}, ~\mathbf{x_{12}x_{13}^{-1}x_{14}}, ~x_{12}x_{14}^{-1}\rangle\\
\Q_{2} 
= &\langle x_{6},\dots, \mathbf{x_{13}}~|~
x_7, ~x_6^{-1}x_7^{-1}x_8, ~x_8^{-1}x_9, ~x_6^{-1}x_9^{-1}x_{10}, ~x_{10}^{-1}x_{11}, ~x_{11}x_{12}^{-1}, ~\mathbf{x_{13}}, ~ x_{12}x_{13}^{-1}x_{12}\rangle\\
\Q_{3} 
= &\langle x_{6}, \dots, \mathbf{x_{12}}~|~
x_7, ~x_6^{-1}x_7^{-1}x_8, ~x_8^{-1}x_9, ~x_6^{-1}x_9^{-1}x_{10}, ~x_{10}^{-1}x_{11}, ~\mathbf{x_{11}x_{12}^{-1}}, ~x_{12}^2\rangle\\
\Q_{4} 
= &\langle x_{6}, \dots, \mathbf{x_{11}}~|~
x_7, ~x_6^{-1}x_7^{-1}x_8, ~x_8^{-1}x_9, ~x_6^{-1}x_9^{-1}x_{10}, ~\mathbf{x_{10}^{-1}x_{11}}, ~x_{11}^2\rangle\\
\Q_{5} 
= &\langle x_{6}, \dots, \mathbf{x_{10}}~|~
x_7, ~x_6^{-1}x_7^{-1}x_8, ~x_8^{-1}x_9, ~\mathbf{x_6^{-1}x_9^{-1}x_{10}}, ~x_{10}^{2}\rangle\\
\Q_{6} 
= &\langle x_{6}, \dots, ~\mathbf{x_9}~|~
x_7, ~x_6^{-1}x_7^{-1}x_8, ~\mathbf{x_8^{-1}x_9}, ~(x_9x_6)^2\rangle\\
\Q_{7} 
= &\langle x_{6}, ~x_{7}, ~\mathbf{x_{8}}~|~
x_7, ~\mathbf{x_6^{-1}x_7^{-1}x_8}, ~(x_8x_6)^2\rangle\\
\Q_{8} 
= &\langle x_{6}, ~\mathbf{x_{7}}~|~
\mathbf{x_7}, ~(x_7x_6^2)^2\rangle\\
\Q_{9} 
= &\langle x_{6}~|~
 ~x_6^{4}\rangle
\end{align*}
\end{footnotesize}
Hence, $\P \sim_{Q^{**}} \langle x_{6}~|~
 ~x_6^{4}\rangle$.
\end{example}

\begin{remark}
Given a presentation $\P$ and a matching $M$ in $K'_\P$ with only one critical 0-cell, we showed that $\P\sim_{Q^{**}}\Q_{K'_\P,M}$. We now estimate a sufficient number of $Q^{**}$-transformations to obtain $\Q_{K'_\P,M}$ from $\P$.
Let $n$ be the number of generators of $\P$, $m$ the number of relators and $k$
the total relator length. We base our reasoning in the proof of Theorem \ref{teo morse presentation}. The equivalences
$\P\sim_{Q^{**}} \P_{K_{\P}}$ and $\P_{K_{\P}}\sim_{Q^{**}}\P_{K_{\P}'}$ can be achieved in 
$O(n+m)$ and $O(k)$ $Q^{**}$-transformations respectively.
By Corollary \ref{morse AC}, $K_{\P}'\threedef (K_{\P}')_M$ and  the estimated number
of $Q^{**}$-transformations required to 
obtain $\P_{(K_{\P}')_M}$ from $\P_{K_{\P}'}$ is proportional to the number of elementary expansions and collapses needed to deform $K_{\P}'$ into $(K_{\P}')_M$. 
This is bounded by the square of the number of cells of $K_{\P}'$, 
which is proportional to $k$.
\end{remark}

\section{Applications to the Andrews--Curtis conjecture}\label{counterexamples}

In this section, we apply the techniques developed in Section \ref{section Q**-transformations} to prove that some of the potential counterexamples to the Andrews--Curtis conjecture do satisfy the conjecture. 

There is a common belief that the Andrews--Curtis conjecture is false.  However, not a single counterexample could be found yet.
Over the last fifty years, a list of examples of balanced presentations 
of the trivial group which are not known to be trivializable via 
$Q^{**}$-transformations has been compiled.
They serve as \textit{potential counterexamples} to disprove the Andrews--Curtis conjecture (see \cite[Ch. XII]{hog1993two} for a detailed reference). 

For a potential counterexample $\P$, the general outline we will adopt to prove that 
$\P\sim_{Q^{**}}\langle~|~\rangle$ is to find an appropriated acyclic matching $M$ in $\X(K_{\P'})$ such that the presentation $\Q_{K'_\P,M}$, which is in the same $Q^{**}$-class as $\P$, is computationally tractable and easily transformed into the trivial presentation $\langle~|~\rangle$. Note that, given $\P$ and the acyclic matching $M$, the complexity of the computation of the presentation $\Q_{K'_\P,M}$, the \textit{Morse presentation} associated to $\P$, is $O(k)$ with $k$ the total length relator of $\P$. We emphasize that our method of $Q^{**}$-transformation makes use of the transformation (4), the only transformation in (1)--(5) that increases the total length relator of the presentation. This point is central for the posterior manageable reduction of the $Q^{**}$-equivalent Morse presentation.

We say that a presentation $\P$ is \textit{greedily trivializable} if the
 \textit{reduction algorithm} for simplification of presentations described in 
\cite{MR760651}  can transform $\P$ into $\langle ~|~ \rangle$. 
This procedure was originally developed for Tietze simplification of presentations and it consists of a loop of two phases which is iterated until no further operation is possible. The \textit{search phase} attempts to reduce the length-relator by replacing  long substrings of relators  by shorter equivalent ones.   That is, if there are relators $r_1$ and $r_2$ such that a suitable cyclic permutation of $r_1$ reads $uv$ and a cyclic permutation of $r_2$, or its inverse reads as $wv$, and the length of $u$ is greater than the length of $w$, then $r_2$ is replaced by $wu^{-1}$. The \textit{elimination phase} involves the elimination of generators $x$ which occur only once in some relator $r$ (in which case the reduction consist of the elimination of the generator $x$ and the relator $r$). 

\begin{remark}\label{remark simplified}
There is only one situation in which the \textit{reduction algorithm} \cite{MR760651}
performs the transformation (6). Namely, suppose that the presentation $\P$ has a relator $r_i$ which is equal to another
relator $r_j$. In that case, the algorithm replaces relator $r_i$ by a 1 and then eliminates the latter 1.
 This transformation changes the deficiency of the presentation and does not preserve the (simple) homotopy type of the associated CW-complex $K_{\P}$. 
 
Note that the previous situation is not possible if $\P$ is a balanced
presentation of the trivial group, since if $\P$ has (after possibly a sequence of $Q^{**}$-transformations)
one relator equal to another, then it is in the same $Q^{**}$-class as a presentation with a relator equal to 1. Thus, $K_{\P}$ has non-trivial
second homology group, which is not possible.
Therefore, the reduction algorithm performs only $Q^{**}$-transformations if the input is a balanced presentation of the trivial group. 
\end{remark}

\subsection{Experimental results.} \label{examples} We next present an application of our method to computationally prove that some potential counterexamples of the Andrews--Curtis conjecture can be $Q^{**}$-trivialized. Concretely, given a balanced presentation $\P$, we computationally generate an acyclic matching $M$ in $\X(K'_\P)$ such that the presentation $\Q_{K'_\P,M}$ is greedily trivializable.
All the computations are made using {\fontfamily{lmss}\selectfont SAGE} \cite{sagemath}.  The code to replicate the following examples can be found at \cite{finite-spaces}, whereas the outline of the routine is described in Appendix \ref{appendix code sage}.
We remark that in all of these examples of balanced presentations $\P$ of the trivial group, the reduction algorithm applied to the presentation $\Q_{K'_\P,T}$, with $T$ the spanning tree  induced by the matching $M$, is not able to trivialize it (cf. Lemma \ref{matching tree}).

\begin{example}[Akbulut \& Kirby \cite{MR816520}]\label{example AK}
The family of balanced presentations of the trivial group $\A\K_n$ was inspired by the case $n=4$, which corresponds to a handle decomposition of the Akbulut-Kirby 4-sphere: \[\A\K_n=\langle x, y~|~ xyx = yxy, x^n = y^{n+1}\rangle,~n\geq 1.\]  We prove that the presentation 
\[
\A\K_2=\langle x, y~|~ xyx = yxy, ~x^2 = y^{3}\rangle
\] 
satisfies the Andrews--Curtis conjecture.
Indeed, it is simple to find an acyclic matching $M$ in $\X(K'_{\A\K_2})$ such that
$\Q_{K'_{\A\K_2},M}$ is greedily trivializable (see Appendix \ref{appendix code sage}). 
Since $\A\K_2 \sim_{Q^{**}} \Q_{K'_{\A\K_2},M}$, this shows that $\A\K_2$ satisfies the Andrews--Curtis conjecture.
This fact was previously proved in \cite{MR1727164} using genetic algorithms  (also by S. M. Gersten in an unpublished work \cite{gersten87}). However, their methods were focused on the explicit sequence of transformations to trivialize the presentation, rather than to give a proof of its existence.
For $n=1$, 
$\A\K_1$ is simply $Q^{**}$-trivializable (see Example \ref{case n=1}). For $n>2$, the question remains open.
\end{example}

\begin{example}[Miller III \& Schupp \cite{MR1732210}]\label{miller}
Given $w$ a word in $x$ and $y$ with exponent sum 0 on $x$ and $n>0$,
\[
\M\S_n (w) = \langle x,y~|~x^{-1}y^n x = y^{n+1}, x = w\rangle
\]
is a balanced presentation of the trivial group.
A well-studied subfamily of the presentations $\M\S_n(w)$ is that given by $w_*=y^{-1}xyx^{-1}$ (see \cite{MR1970867, MR1727164}).  In these works, the authors proved that $\M\S_n(w_*)$ is $Q^{**}$-trivializable for $n\leq 2$. The case $n=3$ was the smallest potential counterexample of this family. With our algorithmic method, we prove that 
\[
\M\S_3(w_*) = \langle x,y~|~x^{-1}y^3 x = y^4, x = y^{-1}xyx^{-1}\rangle\]
does satisfy the Andrews--Curtis conjecture. Precisely, we computationally generate an acyclic matching $M$ such that the Morse presentation $\Q_{K'_{\M\S_3(w_*)},M}$ is greedily trivializable (see Appendix \ref{appendix code sage}).
\end{example}

\begin{example}[Barmak \cite{Ba18}] \label{example barmak} The \textit{generalized Andrews--Curtis conjecture} states that any two presentations $\P$ and $\Q$ with simple homotopy equivalent standard complexes $K_{\P}, K_{\Q}$ are $Q^{**}$-equivalent \cite[Ch.I]{hog1993two}. A strong version of the generalized conjecture asserts that under the previous conditions, $\P$ can be obtained  from $\Q$ by performing only operations (1) to (3) (called \textit{$Q^{*}$-transformations}). 
Recently, Barmak found a counterexample to this strong formulation of the conjecture \cite{Ba18}. He proved that \[\B_1=\langle x,y~|~[x,y],1\rangle\]
is not $Q^*$-equivalent to \[\B_2=\langle x,y~|~[x,[x,y^{-1}]]^2y[y^{-1},x]y^{-1},[x,[[y^{-1},x],x]]\rangle,\]
but $K_{\B_1}\se K_{\B_2}$ (here $[x,y] = xyx^{-1}y^{-1}$). He asked whether these presentations are $Q^{**}$-equivalent or not, being $\B_1$ and $\B_2$ a potential counterexample to the generalized Andrews--Curtis conjecture.

We produce an acyclic matching $M$ in $\X(K'_{\B_2})$ such that the associated Morse presentation is greedily transformed into $\B_1$ via the reduction algorithm (see Appendix \ref{appendix code sage}). By Theorem \ref{teo morse presentation} and Remark \ref{remark simplified}, we conclude that $\B_1\sim_{Q^{**}}\B_2$ 
and hence, that this counterexample to the strong version of the conjecture does not disprove the generalized Andrews--Curtis conjecture.
\end{example}

\subsection{Gordon's potential counterexamples}
 In \cite[Sect. 3.8--3.10]{MR770569}, the author brings to the attention the following family of balanced presentations of the trivial group proposed by Gordon
\[\G_{n,m,p,q}=\langle x,y~| ~x=[x^m,y^n], y=[y^p,x^q]\rangle ~n,m,p,q\in \ZZ,\]
where $[x,y] = xyx^{-1}y^{-1}.$
Many instances of this family of presentations are potential counterexamples of the Andrews--Curtis conjecture
\cite{metzler1985andrews}. For instance, the presentation $\M\S_3(w_*)$ of Example \ref{miller} (see also \cite{MR1732210}) is a representative of this family for $n=m=q = -1, p = -3$.
It is not known in general
whether a presentation $\G_{n,m,p,q}$ is $Q^{**}$-trivializable.
In \cite{MR2253006}, the authors proved that any presentation
in this sequence with total length-relator up to 14  can be transformed into $\langle~|~\rangle$ by exploring the space of possible sequences of transformations (1)--(3).
We focus our attention on the subfamily
 \begin{align*}\G_q :=\G_{-1,-1,-1, -q}&=\langle x,y~| ~x=[x^{-1},y^{-1}], ~ y=[y^{-1},x^{-q}]\rangle. \\
 & = \langle x,y~|~x=x^{-1}y^{-1}x y, ~y = y^{-1}x^{-q} y x^{q}\rangle. \end{align*}
We show that $\G_q$ is $Q^{**}$-trivializable for all  $q\in \NN$, by an inductive procedure based on 
our theory from Section \ref{section Q**-transformations}.

\begin{theorem}
The presentation $\G_q= \langle x,y~|~x=x^{-1}y^{-1}x y, ~y = y^{-1}x^{-q} y x^{q}\rangle$ satisfies the Andrews--Curtis conjecture for all $q\in \NN$.
\end{theorem}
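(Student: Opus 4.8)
The plan is to argue by induction on $q$, using the Morse-presentation machinery of Section \ref{section Q**-transformations}. By Theorem \ref{teo morse presentation}, for any acyclic matching $M$ with a single critical $0$-cell on the barycentric subdivision $K'_{\G_q}$ one has $\G_q\sim_{Q^{**}}\Q_{K'_{\G_q},M}$; so it suffices to exhibit, for each $q$, a matching $M_q$ whose Morse presentation $\Q_{K'_{\G_q},M_q}$ can be trivialized. I would choose $M_q$ so that the iterative rewriting of Definition \ref{pres matching} eliminates all subdivision generators together with the generator $y$, leaving a one-generator Morse presentation $\langle x\mid w_q\rangle$; the claim to be proved by induction is that $w_q$ is (conjugate to) $x^{-1}$, so that $\langle x\mid w_q\rangle\sim_{Q^{**}}\langle\;|\;\rangle$.

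The algebraic heart of the argument, which I expect is the intermediate result of Appendix \ref{appendix gordon}, concerns how the first relator acts on the second. Writing the relators as $r_1=x^{-2}y^{-1}xy$ and $r_2^{(q)}=y^{-2}x^{-q}yx^{q}$, the relation $r_1$ encodes $xy=yx^2$, whence $x^{-1}y=yx^{-2}$ and therefore $x^{-1}yx=yx^{-1}$. A short induction on $q$ then gives $x^{-q}yx^{q}=yx^{-q}$ modulo the normal closure of $r_1$, so that $r_2^{(q)}$ is $Q^{**}$-equivalent, modulo $r_1$, to the much shorter relator $y^{-1}x^{-q}$. As in the worked examples of Section \ref{section Q**-transformations}, this folding of $r_1$ into $r_2^{(q)}$ is exactly the kind of cross-relator substitution that the cascade of internal collapses performs automatically; the content of the induction is that the matching $M_q$ remains acyclic and yields this reduced relator uniformly in $q$.

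Once $r_2^{(q)}$ has been reduced to $y^{-1}x^{-q}$, the generator $y$ occurs exactly once, so Remark \ref{isolate} eliminates it via the equivalent expression $y=x^{-q}$. Substituting into $r_1=x^{-2}y^{-1}xy$ yields $x^{-2}\cdot x^{q}\cdot x\cdot x^{-q}=x^{-1}$, so the surviving relator is $w_q=x^{-1}$ and the presentation collapses to $\langle x\mid x\rangle\sim_{Q^{**}}\langle\;|\;\rangle$ by (the inverse of) operation (4). The base case $q=1$ is covered by this same computation, and may alternatively be certified by checking directly that $\Q_{K'_{\G_1},M_1}$ is greedily trivializable in the sense of Remark \ref{remark simplified}.

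The main obstacle is the middle step: promoting the group-theoretic identity $x^{-q}yx^{q}=yx^{-q}$ to an honest, uniform-in-$q$ sequence of $Q^{**}$-transformations, equivalently producing a single acyclic matching on $\X(K'_{\G_q})$ whose Morse presentation already displays the reduced relator. Because the length of $r_2^{(q)}$ and hence the number of cancellations grow linearly with $q$, no finite check suffices and the induction of Appendix \ref{appendix gordon} is precisely what makes the reduction work for every $q$ at once; everything downstream of it—the elimination of $y$ and the final collapse—is a bounded computation independent of $q$.
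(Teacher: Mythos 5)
Your argument is essentially correct, but it reaches the conclusion by a genuinely different and more direct route than the paper. The paper splits into cases: for $q$ even it runs the greedy reduction algorithm directly on $\G_q$; for $q$ odd it first reduces to $\tilde \G_k=\langle x,y\mid x^{-2}y^{-1}xy,\ y^{-1}x^{-k+1}yx^{-1}y^{-1}\rangle$ (with $q=2k-1$), then sets up an induction on $k$ in which an explicit acyclic matching $M_k$ on $\X(K'_{\tilde\G_k})$ is exhibited, the Morse presentation $\Q_{K'_{\tilde\G_k},M_k}$ is computed in Appendix \ref{appendix gordon}, and that three-generator presentation is shown by hand to be $Q^{**}$-equivalent to $\tilde\G_{k-1}$. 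You instead exploit the full strength of the first relator at once: since $r_1$ presents $BS(1,2)$, one has $x^{-q}yx^{q}\equiv yx^{-q}$ modulo $\langle\langle r_1\rangle\rangle$, so $r_2\equiv y^{-1}x^{-q}$, and after that a single application of Remark \ref{isolate} kills $y$ and leaves $\langle x\mid x^{-1}\rangle$. I verified the computation: $r_2\cdot\bigl(x^{-q}y^{-1}x^{q}yx^{-q}\bigr)$ freely reduces to $y^{-1}x^{-q}$, and the multiplier lies in $\langle\langle r_1\rangle\rangle$ because $y^{-1}x^{q}y\equiv x^{2q}$. Your approach is uniform in $q$, avoids the even/odd dichotomy and the descent $\tilde\G_k\to\tilde\G_{k-1}$ entirely, and in fact never needs the Morse machinery; the paper's longer proof is presumably organized as it is to showcase that machinery.

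Two points need tightening. First, the step you call ``the main obstacle'' --- promoting $x^{-q}yx^{q}\equiv yx^{-q}$ to honest $Q^{**}$-moves --- is not an obstacle and should not be deferred to Appendix \ref{appendix gordon}, which does something entirely different (it computes $\Q_{K'_{\tilde\G_k},M_k}$ for a specific matching; it does not justify cross-relator folding). The justification you actually need is the standard observation that replacing $r_2$ by $r_2c$ for any $c\in\langle\langle r_1\rangle\rangle$ is a composite of operations (1)--(3): conjugate $r_1$, multiply $r_2$ by it, conjugate $r_1$ back, and iterate over the factors of $c$; since $r_2^{-1}(y^{-1}x^{-q})\in\langle\langle r_1\rangle\rangle$, the replacement $r_2\mapsto y^{-1}x^{-q}$ is legal. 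Second, the Morse-matching framing of your first paragraph (choosing $M_q$ so that the internal collapses produce $\langle x\mid w_q\rangle$) is never instantiated and is not what carries the argument; internal collapses perform single-occurrence generator eliminations, not the cross-relator substitution you rely on, so either exhibit such a matching or, better, drop that framing and present the proof as the direct sequence of operations (1)--(5) sketched above.
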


\begin{proof}
If $q$ is even, $\G_{q}$  results greedily trivializable. Indeed, given $q = ~ 2k$ with $k\in \NN$, the reduction algorithm replaces $k$ times the string $x^{-1}yx^2$ by its equivalent expression $y$ in the second relator $y^{-2}x^{-2k}yx^{2k}$, which results in $y^{-2}x^{-k}y$ or, equivalently, $y^{-1}x^{-k}$.
Now, the first relator $x^{-2}y^{-1}xy$ is transformed into $x$ after replacing the string $y$ by $x^{-k}$. The resulting presentation $\langle  x, y ~ | ~ x, y^{-1}x^{-k}\rangle$ is clearly $Q^{**}$-trivializable. Hence, $\G_{2k}\sim_{Q^{**}}\langle ~ | ~ \rangle$.

For $q$ odd, that is $q = 2k-1$,
a similar procedure
transforms $\G_{2k-1}$ into 
\[\tilde \G_k = \langle x,y ~ |~ x^{-2}y^{-1}x y, ~y^{-1}x^{-k+1}yx^{-1}y^{-1}\rangle, ~  k\in \NN.\] For $k=1$, it is easy to check that $\tilde \G_1$
is trivializable. For $k=2$, 
\[\tilde \G_2 = \langle x,y ~ |~ x^{-2}y^{-1}x y, ~y^{-1}x^{-1}yx^{-1}y^{-1}\rangle.
\]
If we replace the first relator $r_1$ by $r_1r_2$, we obtain $\tilde r_1 = x^{-3}y^{-1}$. Now, $y$ occurs only once at $\tilde r_1$. After replacing  $y$ by its equivalent expression $x^{-3}$ in $r_2$ and eliminating the generator $y$ and relator $\tilde r_1$, the resulting $Q^{**}$-equivalent presentation
\[\langle x ~ | ~ x \rangle\] is trivializable.

For $q>3$ odd (that is $q = 2k-1$ with $k>2$), the reduction algorithm is not able to trivialize $\G_q$ (neither $\tilde \G_k$). We prove by induction on $k$ that $\G_k\sim_{Q^{**}}\langle ~ | ~  \rangle$. For $k\geq 3$, consider the acyclic matching $M_k$ in the poset $X_k = \X(K'_{\tilde \G_k})$ of Figure \ref{fig:matching G_k}.
\begin{figure}[htb!]
    \centering
\begin{small}
\xymatrix@C=0.25em@R=4em{
\bullet_{x^{-1}}&\bullet_{x^{+1}}&\bullet_{x^{-1}}&\bullet_{x^{+1}}&
\bullet_{y^{-1}}&\bullet_{y^{+1}}&\circ_{x^{+1}}&\bullet_{x^{-1}}&\bullet_{y^{+1}}&\circ_{y^{-1}}
\\
\bullet_o\ar@{-}@*{[red]}[u]\ar@{-}[urrrrrrrrr]&
\bullet_{x}\ar@{-}@*{[red]}[u]\ar@{-}[ul]&
\bullet_o\ar@{-}@*{[red]}[u]\ar@{-}[ul]&
\bullet_{x}\ar@{-}[u]\ar@{-}[ul]&
\bullet_o\ar@{-}@*{[red]}[u]\ar@{-}[ul]&
\bullet_{y}\ar@{-}@*{[red]}[u]\ar@{-}[ul]&
\bullet_o\ar@{-}[u]\ar@{-}[ul]&
\bullet_{x}\ar@{-}@*{[red]}[u]\ar@{-}[ul]&
\bullet_o\ar@{-}@*{[red]}[u]\ar@{-}[ul]&
\bullet_{y}\ar@{-}[u]\ar@{-}[ul]&
\bullet_{x^{+1}}\ar@{-}@*{[red]}[ulllllll]&
\circ_{x^{-1}}&&
\bullet_{y^{+1}}&
\bullet_{y^{-1}}
\\
&&&&\bullet_{v_{r_1}}\ar@{-}[u]\ar@{-}[ul]\ar@{-}[ull]\ar@{-}[ulll]\ar@{-}[ullll]
\ar@{-}@*{[red]}[urr]\ar@{-}[urrr]\ar@{-}[urrrr]\ar@{-}[urrrrr]\ar@{-}[ur]&&&&&&&
\bullet_{x}\ar@{-}[ul]\ar@{-}[u]\ar@{-}@*{[red]}[ullllllll]&
\circ_o
\ar@{-}[ul]\ar@{-}[ull]\ar@{-}[ur]\ar@{-}[urr]&
\bullet_{y}\ar@{-}[u]\ar@{-}[ur]\ar@{-}@*{[red]}[ullll]
}

\vspace{15pt}

\xymatrix@C=0.0001em@R=4em{
&&&&&
\bullet_{y^{-1}}&\bullet_{y^{+1}}&\bullet_{x^{-1}}&\bullet_{x^{+1}}&\dots&\bullet_{x^{-1}}&\bullet_{x^{+1}}&\bullet_{y^{+1}}&\bullet_{y^{-1}}&\bullet_{x^{-1}}&\bullet_{x^{+1}}&\circ_{y^{-1}}&\bullet_{y^{+1}}\\
\bullet_{x^{+1}}&
\circ_{x^{-1}}&&
\bullet_{y^{+1}}\ar@{-}@*{[red]}[urrrrrrrrr]&
\bullet_{y^{-1}}\ar@{-}@*{[red]}[urrrrrrrrr]&
\bullet_o\ar@{-}@*{[red]}[u]\ar@{-}[urrrrrrrrrrrr]&
\bullet_{y}\ar@{-}@*{[red]}[u]\ar@{-}[ul]&
\bullet_o\ar@{-}@*{[red]}[u]\ar@{-}[ul]&
\bullet_{x}\ar@{-}@*{[red]}[u]\ar@{-}[ul]&\dots&
\bullet_o\ar@{-}@*{[red]}[u]\ar@{-}[ull]|\cdots&
\bullet_{x}\ar@{-}@*{[red]}[u]\ar@{-}[ul]
&
\circ_o\ar@{-}[u]\ar@{-}[ul]&
\bullet_{y}\ar@{-}[u]\ar@{-}[ul]&
\bullet_o\ar@{-}@*{[red]}[u]\ar@{-}[ul]&
\circ_{x}\ar@{-}[u]\ar@{-}[ul]&
\bullet_o\ar@{-}[u]\ar@{-}@*{[red]}[ul]&
\bullet_{y}\ar@{-}@*{[red]}[u]\ar@{-}[ul]
\\
&\bullet_{x}\ar@{-}[ul]\ar@{-}[u]&
\circ_o
\ar@{-}[ul]\ar@{-}[ull]\ar@{-}[ur]\ar@{-}[urr]&
\bullet_{y}\ar@{-}[u]\ar@{-}[ur]
&&&&&&&&\bullet_{v_{r_2}}\ar@{-}[u]
\ar@{-}[ul]\ar@{-}[ulll]\ar@{-}[ullll]\ar@{-}[ulllll]\ar@{-}[ullllll]
\ar@{-}[ur]\ar@{-}@*{[red]}[urr]\ar@{-}[urrr]\ar@{-}[urrrr]\ar@{-}[urrrrr]\ar@{-}[urrrrrr]
}
\end{small}
    \caption{Acyclic matching in $X_k$ (edges in red). Critical points are the empty bullets. Edges between generator elements and relator elements are not depicted unless they belong to the matching.
    The figure illustrates the subposets of $X_k$ corresponding to the generator elements and the first relator elements (top) and the second relator elements (bottom).}
    \label{fig:matching G_k}
\end{figure}

The Morse presentation $\Q_{K'_{\tilde \G_k}, M_k}$ is ($Q^{**}$-equivalent to) \[\begin{small}\langle a_0, a_1, a_2 ~ | ~ a_0 a_1^{-1} a_0, 
~a_2 a_1^{-1} a_2^{-1} a_0^{-1} a_1, 
~a_0a_2^{-1}a_0^{-1} a_1a_2^{-1} (a_1^{-1}a_0)^{k-1}a_2a_1^{-1}
\rangle\end{small}\]
(see Appendix \ref{appendix gordon})
and it can be inductively trivialized.
In fact, there exists a sequence of $Q^{**}$-equivalent presentations
\begin{align*}
\Q_{K'_{\tilde \G_k}, M_k}&\simeq_{Q^{**}} \langle a_0, a_2 ~ | ~ a_2 a_0 ^{-2}a_2^{-1}a_0, ~ a_2^{-1}a_0a_2^{-1}a_0^{-k+1}a_2a_0^{-1} \rangle \\
&\simeq_{Q^{**}} \langle a_0, a_2 ~ | ~ a_2 a_0 ^{-2}a_1^{-1}a_0, ~ a_2^{-1}a_0a_2^{-1}a_0^{-k+2}a_2a_0^{-3} \rangle \\
&\simeq_{Q^{**}} \langle a_0, a_2 ~ | ~ a_2 a_0 ^{-2}a_2^{-1}a_0, ~ a_0^{-1}a_2^{-2}a_0^{-k+2}a_1 \rangle\\
&\simeq_{Q^{**}} \tilde \G_{k-1}.
\end{align*}
For the first $Q^{**}$-equivalence, notice that the generator $a_1$ appears only once in the first relator of $\Q_{K'_{\tilde \G_k}, M_k}$. Thus, after replacing $a_1$ by its equivalent expression $a_0^2$ in the rest of the relators, the generator $a_1$ and the relator $a_0 a_1^{-1} a_0$ can be removed.
Now, for the second equivalence, the second relator can be rewritten as $a_2^{-1}a_0(a_2^{-1}a_0^{-k+2}a_2)(a_2^{-1}a_0^{-1}a_2)a_0^{-1}$. It can be deduced from the first relator that the string  $a_2^{-1}a_0^{-1}$ is equivalent to $a_0^{-2}a_2^{-1}$. After replacing $a_2^{-1}a_0^{-1}$ by its equivalent expression in the second relator, the latter is transformed into $a_2^{-1}a_0(a_2^{-1}a_0^{-k+2}a_2)(a_0^{-2}a_2^{-1}a_2)a_0^{-1}$ or equivalently, $a_0(a_2^{-1}a_0^{-k+2}a_2)a_0^{-3}a_2^{-1}$. By replacing this time $a_0^{-2}a_2^{-1}$ by $a_2^{-1}a_0^{-1}$, it is transformed into $a_2^{-1}a_0^{-k+2}a_2a_0^{-1}a_2^{-1}$ and, hence, the resulting presentation
is $Q^{**}$-equivalent to $\tilde \G_{k-1}$.
By inductive hypothesis, $\tilde \G_{k-1}$ is $Q^{**}$-trivializable and therefore, so is $\Q_{K'_{\tilde \G_k}, M_k}$.
\end{proof}

\appendix
\section{Computational procedure for examples in Section \ref{examples}} \label{appendix code sage}

The algorithm to compute a presentation $Q^{**}$-equivalent to a given one is implemented in {\fontfamily{lmss}\selectfont SAGE} \cite{sagemath}.  The code can be found at the repository \cite{finite-spaces}.\footnote{The package {\fontfamily{lmss}\selectfont Posets} \cite{GAP-posets} in {\fontfamily{lmss}\selectfont GAP} \cite{GAP4} also contains an implementation of the functions described in this section.}

We describe next the main functions. Given as input the lists \texttt{gens} and \texttt{rels} of generators and relators, the function \texttt{group\_presentation(gens, rels)} computes the induced group presentation $\P = \langle \texttt{gens} ~|~ \texttt{rels}\rangle$.
The method \texttt{simplified()} applies the reduction algorithm described in Section \ref{counterexamples} \cite{MR760651}.
It can be verified that the method \texttt{simplified()} is not able to reduce the number of generators and relators of the original presentation in any of the Examples of Section \ref{examples}.

The function \texttt{presentation\_poset(gens, rels)} returns $\X(K'_\P)$, that is, the face poset of the barycentric subdivision of the standard complex induced by $\P$.

Given a poset $\texttt X$, we randomly generate an acyclic matching with a single critical point at level 0 with the function \texttt{spanning\_matching(X)}.

Now, given an acyclic matching \texttt{M} in the poset $\X(K'_{\P})$, the function \texttt{Morse\_presentation (gens, rels, M)} computes the associated Morse presentation $\Q_{K'_\P, \texttt{M}}$ (see Definition \ref{pres matching}). By Theorem \ref{teo morse presentation}, the latter is $Q^{**}$-equivalent to $\P$.

In Examples \ref{example AK} and \ref{miller}, we find in short runtime an appropriate matching \texttt{M} such that the method \texttt{simplified()} applied to \texttt{Morse\_presentation(gens, rels, M)}  produce as output the presentation $\langle ~ | ~ \rangle$. In Example \ref{example barmak}, given \texttt{gens} and \texttt{rels} the generators and relators of $\B'$, we generate a matching \texttt{M} such that \texttt{Morse\_presentation(gens, rels, M).simplified()} is the presentation $\langle x,y~|~[x,y]\rangle$. By Remark \ref{remark simplified}, this implies that the original presentation $\B' = \langle \texttt{gens} ~|~ \texttt{rels}\rangle$ is $Q^{**}$-equivalent to $\B = \langle x,y~|~[x,y], 1\rangle$.\footnote{The list of transformations involved in the simplification of the Morse presentation $Q_{K'_{\B'}, \texttt{M}}$ can be verified using the function \texttt{SimplifyPresentation} in {\fontfamily{lmss}\selectfont GAP} \cite{GAP4} (equivalent to the method \texttt{simplified()} in {\fontfamily{lmss}\selectfont SAGE} \cite{sagemath}) and it is included in the repository \cite{finite-spaces} for completeness.}

The following script summarizes the procedure:

\smallskip 

\begin{spverbatim}
P = group_presentation(gens, rels)
X = presentation_poset(gens, rels)
l = len(P.simplified().generators())
while l>1:
    M  = spanning_matching(X)
    Q  = Morse_presentation(gens, rels, M)
    S = Q.simplified()
    l = len(S.generators())
print(S)
\end{spverbatim}

\section{The Morse presentation $Q_{K'_{\G_{2k-1}}, M_k}$}.\label{appendix gordon}
In this appendix we give a proof of the construction of the Morse presentation $Q_{K'_{\G_{2k-1}}, M_k}$ for all $k\in \NN$. Recall that $\G_{2k-1} = \tilde \G_k = \langle x,y ~ |~ x^{-2}y^{-1}x y, ~y^{-1}x^{-k+1}yx^{-1}y^{-1}\rangle, ~  k\in \NN$. The poset $X_k$ is the face poset of the regular CW-complex $K_{\tilde \G_k}'$ associated to $\tilde \G_k$ (see Figure \ref{fig:CW_gordon}). 
\begin{figure}[htb]
    \centering
    \begin{footnotesize}
    \def\svgwidth{0.89\textwidth}
     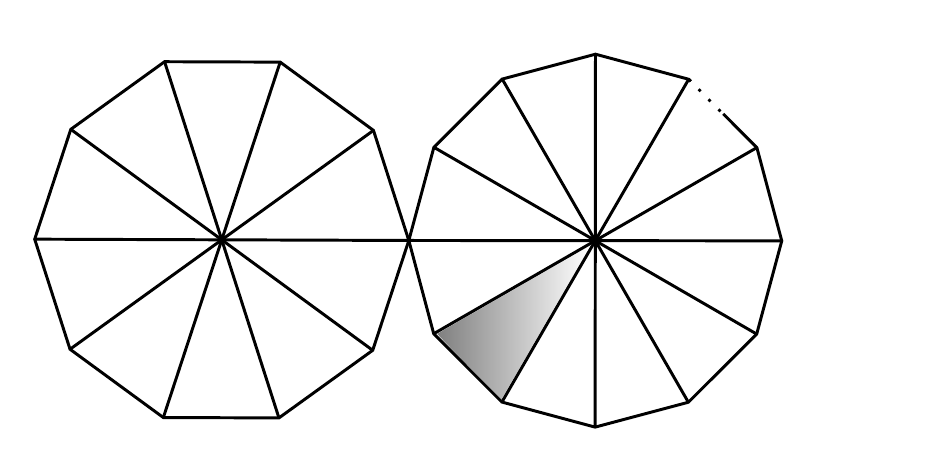
     \end{footnotesize}
    \caption{The regular CW- complex $K_{\tilde \G_k}'$. The 1-cells  $x^{1+}$ and $x^{-1}$ in braces are repeated $k-1$ times and correspond to the substring $x^{-k+1}$ of the second relator of $\tilde \G_k$.}
    \label{fig:CW_gordon}
\end{figure}

We label the 1-cells $x_1, x_2, \dots, x_{20+2k}$ as in Figure \ref{fig:gordon_matching}.

\begin{figure}[htb]
    \centering
    \begin{footnotesize}
    \def\svgwidth{0.89\textwidth}
     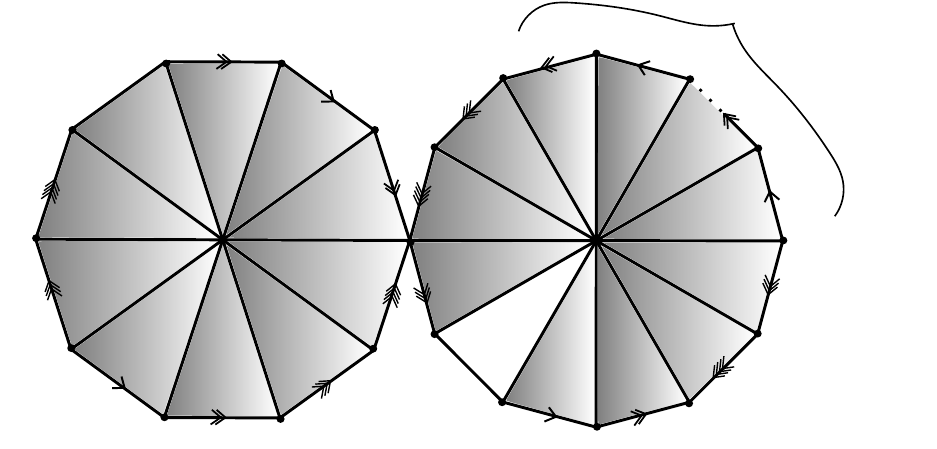
     \end{footnotesize}
    \caption{Labeling of the 1-cells of $K_{\tilde \G_k}'$. The matching $M_k$ in $\X(K_{\tilde \G_k}')$ is represented with red arrows.}
    \label{fig:gordon_matching}
\end{figure}
\

The presentation $\Q_0$ has generators that coincide with the set of 1-cells $x_1, x_2, \dots, x_{20+2k}$ and relators $ r_1, r_2, \dots,$ $r_{20+2k}$ that  correspond either to a non-critical 1-cell (that is, relators $x_8,  x_{11}, x_{14}, x_{16+2k}$) or to the attaching map of a 2-cell.
Concretely,

\begin{footnotesize}
\begin{equation*}
\begin{split}\Q_{0} 
&= \langle x_{1}, \textcolor{darkcerulean}{x_2}, \dots, x_7, x_9, x_{10}, x_{12}, x_{13}, x_{15}, \dots, \textcolor{darkcerulean}{x_{15+2k}}, \dots, \textcolor{darkcerulean}{x_{18+2k}}, x_{19+2k}, x_{20+2k}~|~
 \mathbf{x_8},  \mathbf{x_{11}}, \mathbf{x_{14}},  \mathbf{x_{16+2k}},\\
&\mathbf{x_5}x_2^{-1}x_6^{-1}, 
\mathbf{x_6}x_1^{-1}x_7^{-1}, 
\mathbf{x_7}x_2^{-1}x_8^{-1}, 
x_8\mathbf{x_1^{-1}}x_9^{-1}, 
\mathbf{x_9}x_4^{-1}x_{10}^{-1}, 
\mathbf{x_{10}}x_3^{-1}x_{11}^{-1}, 
\textcolor{darkcerulean}{x_{11}x_1x_{12}^{-1}}, 
\mathbf{x_{12}}x_2x_{13}^{-1}, 
\mathbf{x_{13}}x_3x_{14}^{-1}, \\
&\textcolor{darkcerulean}{x_{14} x_4x_5^{-1}},
\mathbf{x_{15}}x_4^{-1}x_{16}^{-1},
\mathbf{x_{16}}x_3^{-1}x_{17}^{-1},
\mathbf{x_{17}}x_2^{-1}x_{18}^{-1},
\mathbf{x_{18}}x_1^{-1}x_{19}^{-1},
\dots,
\mathbf{x_{13 + 2k}}x_2^{-1}x_{14 + 2k}^{-1},
\mathbf{x_{14 + 2k}} x_1^{-1}x_{15 + 2k}^{-1},\\
& x_{15 + 2k}\mathbf{x_3}x_{16 + 2k}^{-1},
x_{16 + 2k}\mathbf{x_4}x_{17 +  2k}^{-1},
\mathbf{x_{17 +  2k}}x_2^{-1}  x_{18 +  2k}^{-1},
x_{18 +  2k}x_1^{-1} \mathbf{x_{19 +  2k}^{-1}},
\textcolor{darkcerulean}{x_{19 +  2k}x_4^{-1} x_{20 +  2k}^{-1}},
\mathbf{x_{20 +  2k}}x_3^{-1}x_{15}^{-1}
\rangle\\
\end{split}
\end{equation*}
\end{footnotesize}

For each relator, we display in bold either the generator associated to the 1-cell to which the underlying 2-cell is matched or the generator that is not critical.

The critical 1-cells are the ones linked to the generators $x_2, x_{15+2k}, x_{18+2k}$, whereas critical 2-cells are in correspondence with the relators $x_{11}x_1x_{12}^{-1}, x_{14} x_4x_5^{-1}, x_{19 +  2k}x_4^{-1} x_{20 +  2k}^{-1}$ (in \textcolor{darkcerulean}{blue}).
After performing the reductions associated to the non-critical 1-cells (or equivalently, the collapse to a point of the associated spanning tree on the 1-skeleton), we obtain

\begin{footnotesize}
\begin{equation*}
\begin{split}\Q_{0}  
\sim_{Q^{**}}
&\langle x_{1}, \textcolor{darkcerulean}{x_2}, \dots, x_7, x_9, x_{10}, x_{12}, x_{13}, x_{15}, \dots, \textcolor{darkcerulean}{x_{15+2k}}, x_{17+2k}, \textcolor{darkcerulean}{x_{18+2k}}, x_{19+2k}, x_{20+2k}~|~\\
&\mathbf{x_5}x_2^{-1}x_6^{-1}, 
\mathbf{x_6}x_1^{-1}x_7^{-1}, 
\mathbf{x_7}x_2^{-1}, 
\mathbf{x_1^{-1}}x_9^{-1}, 
\mathbf{x_9}x_4^{-1}x_{10}^{-1}, 
\mathbf{x_{10}}x_3^{-1}, 
\textcolor{darkcerulean}{x_1x_{12}^{-1}}, 
\mathbf{x_{12}}x_2x_{13}^{-1}, 
\mathbf{x_{13}}x_3, \\
&\textcolor{darkcerulean}{ x_4x_5^{-1}},
\mathbf{x_{15}}x_4^{-1}x_{16}^{-1},
\mathbf{x_{16}}x_3^{-1}x_{17}^{-1},
\mathbf{x_{17}}x_2^{-1}x_{18}^{-1},
\mathbf{x_{18}}x_1^{-1}x_{19}^{-1},
\dots,
\mathbf{x_{13 + 2k}}x_2^{-1}x_{14 + 2k}^{-1},
\mathbf{x_{14 + 2k}} x_1^{-1}x_{15 + 2k}^{-1},\\
& x_{15 + 2k}\mathbf{x_3},
\mathbf{x_4}x_{17 +  2k}^{-1},
\mathbf{x_{17 +  2k}}x_2^{-1}  x_{18 +  2k}^{-1},
x_{18 +  2k}x_1^{-1} \mathbf{x_{19 +  2k}^{-1}},
\textcolor{darkcerulean}{x_{19 +  2k}x_4^{-1} x_{20 +  2k}^{-1}},
\mathbf{x_{20 +  2k}}x_3^{-1}x_{15}^{-1}
\rangle.\\
\end{split}
\end{equation*}
\end{footnotesize}

Now, we perform iteratively the reductions associated to the internal collapses involving pairs of 1-cells and 2-cells. At each step, we replace a generator by its equivalent expression induced by the associated matched pair of cells, and then we remove the corresponding generator and relator. The generator and relator involved at each stage of the procedure are marked in bold.

\begin{footnotesize}
\begin{equation*}
\begin{split}\tilde \G_k 
\sim_{Q^{**}}
&\langle x_{1}, \textcolor{darkcerulean}{x_2}, \dots, x_7, x_9, x_{10}, x_{12}, x_{13}, x_{15}, \dots, \textcolor{darkcerulean}{x_{15+2k}}, x_{17+2k}, \textcolor{darkcerulean}{x_{18+2k}}, x_{1
9+2k},  \mathbf{x_{20+2k}}~|~\\
&x_5x_2^{-1}x_6^{-1}, 
x_6^{-1}x_7^{-1}, 
x_7x_2^{-1}, 
x_1^{-1}x_9^{-1}, 
x_9x_4^{-1}x_{10}^{-1}, 
x_{10}x_3^{-1}, 
\textcolor{darkcerulean}{x_1x_{12}^{-1}}, 
x_{12}x_2x_{13}^{-1}, 
x_{13}x_3, \textcolor{darkcerulean}{ x_4x_5^{-1}},\\
&
x_{15}x_4^{-1}x_{16}^{-1},
x_{16}x_3^{-1}x_{17}^{-1},
x_{17}x_2^{-1}x_{18}^{-1},
x_{18}x_1^{-1}x_{19}^{-1},
\dots,
x_{13 + 2k}x_2^{-1}x_{14 + 2k}^{-1},
x_{14 + 2k} x_1^{-1}x_{15 + 2k}^{-1},\\
& x_{15 + 2k}x_3,
x_4x_{17 +  2k}^{-1},
x_{17 +  2k}x_2^{-1}  x_{18 +  2k}^{-1},
x_{18 +  2k}x_1^{-1} x_{19 +  2k}^{-1},
\textcolor{darkcerulean}{x_{19 +  2k}x_4^{-1} x_{20 +  2k}^{-1}},
\mathbf{x_{20 +  2k}x_3^{-1}x_{15}^{-1}}
\rangle\\
\sim_{Q^{**}}&
\langle x_{1}, \textcolor{darkcerulean}{x_2}, \dots, x_7, x_9, x_{10}, x_{12}, x_{13}, x_{15}, \dots, \textcolor{darkcerulean}{x_{15+2k}}, x_{17+2k}, \textcolor{darkcerulean}{x_{18+2k}}, \mathbf{x_{19+2k}}~|~\\
&x_5x_2^{-1}x_6^{-1}, 
x_6x_1^{-1}x_7^{-1}, 
x_7x_2^{-1}, 
x_1^{-1}x_9^{-1}, 
x_9x_4^{-1}x_{10}^{-1}, 
x_{10}x_3^{-1}, 
\textcolor{darkcerulean}{x_1x_{12}^{-1}}, 
x_{12}x_2x_{13}^{-1}, 
x_{13}x_3, \textcolor{darkcerulean}{ x_4x_5^{-1}},\\
&x_{15}x_4^{-1}x_{16}^{-1},
x_{16}x_3^{-1}x_{17}^{-1},
x_{17}x_2^{-1}x_{18}^{-1},
x_{18}x_1^{-1}x_{19}^{-1},
\dots,
x_{13 + 2k}x_2^{-1}x_{14 + 2k}^{-1},
x_{14 + 2k} x_1^{-1}x_{15 + 2k}^{-1},\\
& x_{15 + 2k}\mathbf{x_3},
x_4x_{17 +  2k}^{-1},
x_{17 +  2k}x_2^{-1}  x_{18 +  2k}^{-1},
\mathbf{x_{18 +  2k}x_1^{-1} x_{19 +  2k}^{-1}},
\textcolor{darkcerulean}{x_{19 +  2k}x_4^{-1} x_3^{-1}x_{15}^{-1}}
\rangle\\
\sim_{Q^{**}}&
\langle x_{1}, \textcolor{darkcerulean}{x_2}, \dots, x_7, x_9, x_{10}, x_{12}, x_{13}, x_{15}, \dots, \textcolor{darkcerulean}{x_{15+2k}}, \mathbf{x_{17+2k}}, \textcolor{darkcerulean}{x_{18+2k}}~|~\\
&x_5x_2^{-1}x_6^{-1}, 
x_6x_1^{-1}x_7^{-1}, 
x_7x_2^{-1}, 
x_1^{-1}x_9^{-1}, 
x_9x_4^{-1}x_{10}^{-1}, 
x_{10}x_3^{-1}, 
\textcolor{darkcerulean}{x_1x_{12}^{-1}}, 
x_{12}x_2x_{13}^{-1}, 
x_{13}x_3, \textcolor{darkcerulean}{ x_4x_5^{-1}},\\
&
x_{15}x_4^{-1}x_{16}^{-1},
x_{16}x_3^{-1}x_{17}^{-1},
x_{17}x_2^{-1}x_{18}^{-1},
x_{18}x_1^{-1}x_{19}^{-1},
\dots,
x_{13 + 2k}x_2^{-1}x_{14 + 2k}^{-1},
x_{14 + 2k} x_1^{-1}x_{15 + 2k}^{-1},\\
& x_{15 + 2k}x_3,
x_4x_{17 +  2k}^{-1},
\mathbf{x_{17 +  2k}x_2^{-1}  x_{18 +  2k}^{-1}},
\textcolor{darkcerulean}{x_{18 +  2k}x_1^{-1}x_4^{-1} x_3^{-1}x_{15}^{-1}}
\rangle\\
\sim_{Q^{**}}&
\langle x_{1}, \textcolor{darkcerulean}{x_2}, x_3, \mathbf{x_4}, x_5, x_6,  x_7, x_9, x_{10}, x_{12}, x_{13}, x_{15}, \dots, x_{14+2k}, \textcolor{darkcerulean}{x_{15+2k}},  \textcolor{darkcerulean}{x_{18+2k}}~|~\\
&x_5x_2^{-1}x_6^{-1}, 
x_6x_1^{-1}x_7^{-1}, 
x_7x_2^{-1}, 
x_1^{-1}x_9^{-1}, 
x_9x_4^{-1}x_{10}^{-1}, 
x_{10}x_3^{-1}, 
\textcolor{darkcerulean}{x_1x_{12}^{-1}}, 
x_{12}x_2x_{13}^{-1}, 
x_{13}x_3, \textcolor{darkcerulean}{ x_4x_5^{-1}},\\
&
x_{15}x_4^{-1}x_{16}^{-1},
x_{16}x_3^{-1}x_{17}^{-1},
x_{17}x_2^{-1}x_{18}^{-1},
x_{18}x_1^{-1}x_{19}^{-1},
\dots,
x_{13 + 2k}x_2^{-1}x_{14 + 2k}^{-1},
x_{14 + 2k} x_1^{-1}x_{15 + 2k}^{-1},\\
& x_{15 + 2k}x_3,
\mathbf{x_4x_2^{-1}  x_{18 +  2k}^{-1}},
\textcolor{darkcerulean}{x_{18 +  2k}x_1^{-1}x_4^{-1} x_3^{-1}x_{15}^{-1}}
\rangle\\
\sim_{Q^{**}}&
\langle x_{1}, \textcolor{darkcerulean}{x_2}, \mathbf{x_3}, x_5, x_6, x_7, x_9, x_{10}, x_{12}, x_{13}, x_{15}, \dots, x_{14+2k}, \textcolor{darkcerulean}{x_{15+2k}},  \textcolor{darkcerulean}{x_{18+2k}}~|~\\
&x_5x_2^{-1}x_6^{-1}, ~
x_6x_1^{-1}x_7^{-1}, ~
x_7x_2^{-1}, ~
x_1^{-1}x_9^{-1}, ~
x_9x_2^{-1}  x_{18 +  2k}^{-1}x_{10}^{-1}, ~
x_{10}x_3^{-1}, ~
\textcolor{darkcerulean}{x_1x_{12}^{-1}}, ~
x_{12}x_2x_{13}^{-1}, ~
x_{13}x_3, ~\\
&\textcolor{darkcerulean}{ x_{18 +  2k}x_2x_5^{-1}},
x_{15}x_2^{-1}  x_{18 +  2k}^{-1}x_{16}^{-1},
x_{16}x_3^{-1}x_{17}^{-1},
x_{17}x_2^{-1}x_{18}^{-1},
x_{18}x_1^{-1}x_{19}^{-1},
\dots,
x_{13 + 2k}x_2^{-1}x_{14 + 2k}^{-1},
\\
& x_{14 + 2k} x_1^{-1}x_{15 + 2k}^{-1}, \mathbf{x_{15 + 2k}x_3},
\textcolor{darkcerulean}{x_{18 +  2k}x_1^{-1}x_2^{-1}  x_{18 +  2k}^{-1} x_3^{-1}x_{15}^{-1}}
\rangle\\
\sim_{Q^{**}}&
\langle x_{1}, \textcolor{darkcerulean}{x_2}, x_5, x_6, x_7, x_9, x_{10}, x_{12}, x_{13}, x_{15}, \dots,  \mathbf{x_{14+2k}},\textcolor{darkcerulean}{x_{15+2k}},  \textcolor{darkcerulean}{x_{18+2k}}~|~\\
&x_5x_2^{-1}x_6^{-1}, 
x_6x_1^{-1}x_7^{-1}, 
x_7x_2^{-1}, 
x_1^{-1}x_9^{-1}, 
x_9x_2^{-1}  x_{18 +  2k}^{-1}x_{10}^{-1}, 
x_{10}x_{15 + 2k}, 
\textcolor{darkcerulean}{x_1x_{12}^{-1}}, 
x_{12}x_2x_{13}^{-1}, 
x_{13}x_{15 + 2k}^{-1}, 
\\
& \textcolor{darkcerulean}{ x_{18 +  2k}x_2x_5^{-1}},
x_{15}x_2^{-1}  x_{18 +  2k}^{-1}x_{16}^{-1},
x_{16}x_{15 + 2k}x_{17}^{-1},
x_{17}x_2^{-1}x_{18}^{-1},
\dots,
x_{13 + 2k}x_2^{-1}x_{14 + 2k}^{-1},
\\
&\mathbf{x_{14 + 2k} x_1^{-1}x_{15 + 2k}^{-1}},\textcolor{darkcerulean}{x_{18 +  2k}x_1^{-1}x_2^{-1}  x_{18 +  2k}^{-1} x_{15 + 2k}x_{15}^{-1}}
\rangle\\
\sim_{Q^{**}}&
\langle x_{1}, \textcolor{darkcerulean}{x_2}, x_5, x_6, x_7, x_9, x_{10}, x_{12}, x_{13}, x_{15}, \dots,  \mathbf{x_{13+2k}}, \textcolor{darkcerulean}{x_{15+2k}},  \textcolor{darkcerulean}{x_{18+2k}}~|~\\
&x_5x_2^{-1}x_6^{-1}, 
x_6x_1^{-1}x_7^{-1}, 
x_7x_2^{-1}, 
x_1^{-1}x_9^{-1}, 
x_9x_2^{-1}  x_{18 +  2k}^{-1}x_{10}^{-1}, 
x_{10}x_{15 + 2k}, 
\textcolor{darkcerulean}{x_1x_{12}^{-1}}, 
x_{12}x_2x_{13}^{-1}, 
x_{13}x_{15 + 2k}^{-1}, \\
&\textcolor{darkcerulean}{ x_{18 +  2k}x_2x_5^{-1}},
x_{15}x_2^{-1}  x_{18 +  2k}^{-1}x_{16}^{-1},
x_{16}x_{15 + 2k}x_{17}^{-1},
x_{17}x_2^{-1}x_{18}^{-1},
\dots,
x_{12+2k}x_1^{-1}x_{13+2k}^{-1},
\\
&\mathbf{x_{13 + 2k}x_2^{-1}x_1^{-1}x_{15 + 2k}^{-1}},
\textcolor{darkcerulean}{x_{18 +  2k}x_1^{-1}x_2^{-1}  x_{18 +  2k}^{-1} x_{15 + 2k}x_{15}^{-1}}
\rangle\\
\sim_{Q^{**}}&
\langle x_{1}, \textcolor{darkcerulean}{x_2}, x_5, x_6, x_7, x_9, x_{10}, x_{12}, x_{13}, x_{15}, \dots , \mathbf{x_{12+2k}}, \textcolor{darkcerulean}{x_{15+2k}},  \textcolor{darkcerulean}{x_{18+2k}}~|~\\
&x_5x_2^{-1}x_6^{-1}, 
x_6x_1^{-1}x_7^{-1}, 
x_7x_2^{-1}, 
x_1^{-1}x_9^{-1}, 
x_9x_2^{-1}  x_{18 +  2k}^{-1}x_{10}^{-1}, 
x_{10}x_{15 + 2k}, 
\textcolor{darkcerulean}{x_1x_{12}^{-1}}, 
x_{12}x_2x_{13}^{-1}, 
x_{13}x_{15 + 2k}^{-1}, \\
&\textcolor{darkcerulean}{ x_{18 +  2k}x_2x_5^{-1}},
x_{15}x_2^{-1}  x_{18 +  2k}^{-1}x_{16}^{-1},
x_{16}x_{15 + 2k}x_{17}^{-1},
x_{17}x_2^{-1}x_{18}^{-1},
\dots,
\mathbf{x_{12+2k}x_1^{-1}x_2^{-1}x_1^{-1}x_{15 + 2k}^{-1}},
\\
&\textcolor{darkcerulean}{x_{18 +  2k}x_1^{-1}x_2^{-1}  x_{18 +  2k}^{-1} x_{15 + 2k}x_{15}^{-1}}
\rangle\\
\dots\\
\sim_{Q^{**}}&
\langle x_{1}, \textcolor{darkcerulean}{x_2}, x_5, x_6, x_7, x_9, x_{10}, x_{12}, x_{13}, x_{15}, x_{16}, \mathbf{x_{17}}, \textcolor{darkcerulean}{x_{15+2k}},  \textcolor{darkcerulean}{x_{18+2k}}~|~\\
&x_5x_2^{-1}x_6^{-1}, 
x_6x_1^{-1}x_7^{-1}, 
x_7x_2^{-1}, 
x_1^{-1}x_9^{-1}, 
x_9x_2^{-1}  x_{18 +  2k}^{-1}x_{10}^{-1}, 
x_{10}x_{15 + 2k}, 
\textcolor{darkcerulean}{x_1x_{12}^{-1}}, 
x_{12}x_2x_{13}^{-1}, 
x_{13}x_{15 + 2k}^{-1}, \\
&\textcolor{darkcerulean}{ x_{18 +  2k}x_2x_5^{-1}},
x_{15}x_2^{-1}  x_{18 +  2k}^{-1}x_{16}^{-1},
x_{16}x_{15 + 2k}x_{17}^{-1},
\mathbf{x_{17}(x_2^{-1}x_1^{-1})^{k-1}x_{15+2k}^{-1}},\\
&
\textcolor{darkcerulean}{x_{18 +  2k}x_1^{-1}x_2^{-1}  x_{18 +  2k}^{-1} x_{15 + 2k}x_{15}^{-1}}
\rangle\\
\sim_{Q^{**}}&
\langle x_{1}, \textcolor{darkcerulean}{x_2}, x_5, x_6, x_7, x_9, x_{10}, x_{12}, x_{13}, x_{15}, \mathbf{x_{16}}, \textcolor{darkcerulean}{x_{15+2k}},  \textcolor{darkcerulean}{x_{18+2k}}~|~\\
&x_5x_2^{-1}x_6^{-1}, 
x_6x_1^{-1}x_7^{-1}, 
x_7x_2^{-1}, 
x_1^{-1}x_9^{-1}, 
x_9x_2^{-1}  x_{18 +  2k}^{-1}x_{10}^{-1}, 
x_{10}x_{15 + 2k}, 
\textcolor{darkcerulean}{x_1x_{12}^{-1}}, 
x_{12}x_2x_{13}^{-1}, 
x_{13}x_{15 + 2k}^{-1}, \\
&\textcolor{darkcerulean}{ x_{18 +  2k}x_2x_5^{-1}},
x_{15}x_2^{-1}  x_{18 +  2k}^{-1}x_{16}^{-1},
\mathbf{x_{16}x_{15 + 2k}(x_2^{-1}x_1^{-1})^{k-1}x_{15+2k}^{-1}},
\textcolor{darkcerulean}{x_{18 +  2k}x_1^{-1}x_2^{-1}  x_{18 +  2k}^{-1} x_{15 + 2k}x_{15}^{-1}}
\rangle\\
\end{split}
\end{equation*}

\begin{equation*}
\begin{split}
\sim_{Q^{**}}&
\langle x_{1}, \textcolor{darkcerulean}{x_2}, x_5, x_6, x_7, x_9, x_{10}, x_{12}, x_{13}, \mathbf{x_{15}}, \textcolor{darkcerulean}{x_{15+2k}},  \textcolor{darkcerulean}{x_{18+2k}}~|~\\
&x_5x_2^{-1}x_6^{-1}, 
x_6x_1^{-1}x_7^{-1}, 
x_7x_2^{-1}, 
x_1^{-1}x_9^{-1}, 
x_9x_2^{-1}  x_{18 +  2k}^{-1}x_{10}^{-1}, 
x_{10}x_{15 + 2k}, 
\textcolor{darkcerulean}{x_1x_{12}^{-1}}, 
x_{12}x_2x_{13}^{-1}, 
x_{13}x_{15 + 2k}^{-1}, \\
& \textcolor{darkcerulean}{ x_{18 +  2k}x_2x_5^{-1}},
\mathbf{x_{15}x_2^{-1}  x_{18 +  2k}^{-1}x_{15 + 2k}(x_2^{-1}x_1^{-1})^{k-1}x_{15+2k}^{-1}},
\textcolor{darkcerulean}{x_{18 +  2k}x_1^{-1}x_2^{-1}  x_{18 +  2k}^{-1} x_{15 + 2k}x_{15}^{-1}}
\rangle\\
\sim_{Q^{**}}&\langle x_{1}, \textcolor{darkcerulean}{x_2}, x_5, x_6, x_7, x_9, x_{10}, x_{12}, \mathbf{x_{13}} , \textcolor{darkcerulean}{x_{15+2k}},  \textcolor{darkcerulean}{x_{18+2k}}~|~\\
&x_5x_2^{-1}x_6^{-1}, 
x_6x_1^{-1}x_7^{-1}, 
x_7x_2^{-1}, 
x_1^{-1}x_9^{-1}, 
x_9x_2^{-1}  x_{18 +  2k}^{-1}x_{10}^{-1}, 
x_{10}x_{15 + 2k}, 
\textcolor{darkcerulean}{x_1x_{12}^{-1}}, 
x_{12}x_2x_{13}^{-1}, 
\mathbf{x_{13}x_{15 + 2k}^{-1}}, \\
&\textcolor{darkcerulean}{ x_{18 +  2k}x_2x_5^{-1}},
\textcolor{darkcerulean}{x_{18 +  2k}x_1^{-1}x_2^{-1}  x_{18 +  2k}^{-1} x_{15 + 2k}x_2^{-1}  x_{18 +  2k}^{-1}x_{15 + 2k}(x_2^{-1}x_1^{-1})^{k-1}x_{15+2k}^{-1}}
\rangle\\
 \sim_{Q^{**}}&
\langle x_{1}, \textcolor{darkcerulean}{x_2}, x_5, x_6, x_7, x_9, x_{10}, \mathbf{x_{12}}, \textcolor{darkcerulean}{x_{15+2k}},  \textcolor{darkcerulean}{x_{18+2k}}~|~\\
&x_5x_2^{-1}x_6^{-1}, 
x_6x_1^{-1}x_7^{-1}, 
x_7x_2^{-1}, 
x_1^{-1}x_9^{-1}, 
x_9x_2^{-1}  x_{18 +  2k}^{-1}x_{10}^{-1}, 
x_{10}x_{15 + 2k}, 
\textcolor{darkcerulean}{x_1x_{12}^{-1}}, 
\mathbf{x_{12}x_2x_{15 + 2k}^{-1}}, \\
&\textcolor{darkcerulean}{ x_{18 +  2k}x_2x_5^{-1}},
\textcolor{darkcerulean}{x_{18 +  2k}x_1^{-1}x_2^{-1}  x_{18 +  2k}^{-1} x_{15 + 2k}x_2^{-1}  x_{18 +  2k}^{-1}x_{15 + 2k}(x_2^{-1}x_1^{-1})^{k-1}x_{15+2k}^{-1}}
\rangle\\
\sim_{Q^{**}}&
\langle x_{1}, \textcolor{darkcerulean}{x_2}, x_5, x_6, x_7, x_9, \mathbf{x_{10}}, \textcolor{darkcerulean}{x_{15+2k}},  \textcolor{darkcerulean}{x_{18+2k}}~|~\\
&x_5x_2^{-1}x_6^{-1}, 
x_6x_1^{-1}x_7^{-1}, 
x_7x_2^{-1}, 
x_1^{-1}x_9^{-1}, 
x_9x_2^{-1}  x_{18 +  2k}^{-1}x_{10}^{-1}, 
\mathbf{x_{10}x_{15 + 2k}}, 
\textcolor{darkcerulean}{x_1x_2x_{15 + 2k}^{-1}}, \\
&\textcolor{darkcerulean}{ x_{18 +  2k}x_2x_5^{-1}},
\textcolor{darkcerulean}{x_{18 +  2k}x_1^{-1}x_2^{-1}  x_{18 +  2k}^{-1} x_{15 + 2k}x_2^{-1}  x_{18 +  2k}^{-1}x_{15 + 2k}(x_2^{-1}x_1^{-1})^{k-1}x_{15+2k}^{-1}}
\rangle\\
\sim_{Q^{**}}&
\langle x_{1}, \textcolor{darkcerulean}{x_2}, x_5, x_6, x_7, \mathbf{x_9}, \textcolor{darkcerulean}{x_{15+2k}},  \textcolor{darkcerulean}{x_{18+2k}}~|~\\
&x_5x_2^{-1}x_6^{-1}, 
x_6x_1^{-1}x_7^{-1}, 
x_7x_2^{-1}, 
x_1^{-1}x_9^{-1}, 
\mathbf{x_9x_2^{-1}  x_{18 +  2k}^{-1}x_{15 + 2k}}, 
\textcolor{darkcerulean}{x_1x_2x_{15 + 2k}^{-1}}, \\
&\textcolor{darkcerulean}{ x_{18 +  2k}x_2x_5^{-1}},
\textcolor{darkcerulean}{x_{18 +  2k}x_1^{-1}x_2^{-1}  x_{18 +  2k}^{-1} x_{15 + 2k}x_2^{-1}  x_{18 +  2k}^{-1}x_{15 + 2k}(x_2^{-1}x_1^{-1})^{k-1}x_{15+2k}^{-1}}
\rangle\\
\sim_{Q^{**}}&
\langle \mathbf{x_{1}}, \textcolor{darkcerulean}{x_2}, x_5, x_6, x_7, \textcolor{darkcerulean}{x_{15+2k}},  \textcolor{darkcerulean}{x_{18+2k}}~|~\\
&x_5x_2^{-1}x_6^{-1}, 
x_6x_1^{-1}x_7^{-1}, 
x_7x_2^{-1}, 
\mathbf{x_1^{-1}x_2^{-1}  x_{18 +  2k}^{-1}x_{15 + 2k}}, 
\textcolor{darkcerulean}{x_1x_2x_{15 + 2k}^{-1}}, \\
&\textcolor{darkcerulean}{ x_{18 +  2k}x_2x_5^{-1}},
\textcolor{darkcerulean}{x_{18 +  2k}x_1^{-1}x_2^{-1}  x_{18 +  2k}^{-1} x_{15 + 2k}x_2^{-1}  x_{18 +  2k}^{-1}x_{15 + 2k}(x_2^{-1}x_1^{-1})^{k-1}x_{15+2k}^{-1}}
\rangle\\
\sim_{Q^{**}}&
\langle \textcolor{darkcerulean}{x_2}, x_5, x_6, \mathbf{x_7}, \textcolor{darkcerulean}{x_{15+2k}},  \textcolor{darkcerulean}{x_{18+2k}}~|~\\
&x_5x_2^{-1}x_6^{-1}, 
x_6x_{15+2}^{-1}x_{18+2k}x_2x_7^{-1}, 
\mathbf{x_7x_2^{-1}}, 
\textcolor{darkcerulean}{x_2^{-1}  x_{18 +  2k}^{-1}x_{15 + 2k}x_2x_{15 + 2k}^{-1}}, \\
&\textcolor{darkcerulean}{ x_{18 +  2k}x_2x_5^{-1}},
\textcolor{darkcerulean}{x_{18 +  2k} x_{15 + 2k}^{-1}x_{18 +  2k}x_2x_2^{-1}  x_{18 +  2k}^{-1} x_{15 + 2k}x_2^{-1}  x_{18 +  2k}^{-1}x_{15 + 2k}(x_2^{-1}x_{15+2}^{-1}x_{18+2k}x_2)^{k-1}x_{15+2k}^{-1}}
\rangle\\
\sim_{Q^{**}}&
\langle  \textcolor{darkcerulean}{x_2}, x_5, \mathbf{x_6}, \textcolor{darkcerulean}{x_{15+2k}},  \textcolor{darkcerulean}{x_{18+2k}}~|~\\
&x_5x_2^{-1}x_6^{-1}, 
\mathbf{x_6x_{15+2}^{-1}x_{18+2k}x_2x_2^{-1}}, 
\textcolor{darkcerulean}{x_2^{-1}  x_{18 +  2k}^{-1}x_{15 + 2k}x_2x_{15 + 2k}^{-1}}, \\
&\textcolor{darkcerulean}{ x_{18 +  2k}x_2x_5^{-1}},
\textcolor{darkcerulean}{x_{18 +  2k} x_{15 + 2k}^{-1}x_{18 +  2k}x_2x_2^{-1}  x_{18 +  2k}^{-1} x_{15 + 2k}x_2^{-1}  x_{18 +  2k}^{-1}x_{15 + 2k}(x_2^{-1}x_{15+2}^{-1}x_{18+2k}x_2)^{k-1}x_{15+2k}^{-1}}
\rangle\\
\sim_{Q^{**}}&
\langle \textcolor{darkcerulean}{x_2}, \mathbf{x_5}, \textcolor{darkcerulean}{x_{15+2k}},  \textcolor{darkcerulean}{x_{18+2k}}~|~\\
&\mathbf{x_5x_2^{-1}x_{15+2}^{-1}x_{18+2k}}, 
\textcolor{darkcerulean}{x_2^{-1}  x_{18 +  2k}^{-1}x_{15 + 2k}x_2x_{15 + 2k}^{-1}}, \\
&\textcolor{darkcerulean}{ x_{18 +  2k}x_2x_5^{-1}},
\textcolor{darkcerulean}{x_{18 +  2k} x_{15 + 2k}^{-1}x_{18 +  2k}x_2x_2^{-1}  x_{18 +  2k}^{-1} x_{15 + 2k}x_2^{-1}  x_{18 +  2k}^{-1}x_{15 + 2k}(x_2^{-1}x_{15+2}^{-1}x_{18+2k}x_2)^{k-1}x_{15+2k}^{-1}}
\rangle\\
\sim_{Q^{**}}&
\langle \textcolor{darkcerulean}{x_2}, \textcolor{darkcerulean}{x_{15+2k}},  \textcolor{darkcerulean}{x_{18+2k}}~|~\\
&\textcolor{darkcerulean}{x_2^{-1}  x_{18 +  2k}^{-1}x_{15 + 2k}x_2x_{15 + 2k}^{-1}}, ~\textcolor{darkcerulean}{ x_{18 +  2k}x_2x_2^{-1}x_{15+2}^{-1}x_{18+2k}},\\
&\textcolor{darkcerulean}{x_{18 +  2k} x_{15 + 2k}^{-1}x_{18 +  2k}x_2x_2^{-1}  x_{18 +  2k}^{-1} x_{15 + 2k}x_2^{-1}  x_{18 +  2k}^{-1}x_{15 + 2k}(x_2^{-1}x_{15+2}^{-1}x_{18+2k}x_2)^{k-1}x_{15+2k}^{-1}}
\rangle\\
\end{split}
\end{equation*}
\end{footnotesize}

Therefore, \[\begin{split}\Q_{K'_{\tilde \G_{k}},M_k} \sim_{Q^{**}} \langle x_2, x_{15+2k},  x_{18+2k}~|~& x_{18 +  2k}x_{15+2}^{-1}x_{18+2k},~
x_2x_{15 + 2k}^{-1} x_2^{-1} x_{18 +  2k}^{-1}x_{15 + 2k}, ~ \\
& x_{18 +  2k} x_2^{-1}  x_{18 +  2k}^{-1}x_{15 + 2k}x_2^{-1}(x_{15+2}^{-1}x_{18+2k})^{k-1}x_2x_{15+2k}^{-1}
\rangle
\end{split}\]

\medskip

{\bf Acknowledgements.}
The author is grateful to Eugenio Borghini and Gabriel Minian  for many useful discussions and suggestions during the preparation of this article. This work was partially supported by the EPSRC grant New Approaches to Data Science: Application Driven Topological Data Analysis EP/R018472/1.

\bibliographystyle{plain}
\bibliography{biblio}

\end{document}